\documentclass[leqno]{article}

\usepackage{natbib}
\usepackage{geometry}
\usepackage{ts_def_share}

\usepackage{enumerate}
\usepackage{authblk}
\numberwithin{equation}{section}

\newcommand{\cplxtrans}{\dagger}

\begin{document}

\title{\textbf{Affine Volterra covariance processes and application to commodity markets}}

\author[1]{Boris G\"unther}

\author[1]{Ludger Overbeck}

\affil[1]{\footnotesize Institute of Mathematics, University of Gie{\ss}en, Germany}

\date{\today}
\maketitle

\begin{abstract}
	We study affine stochastic Volterra equations on the cone of symmetric positive semidefinite matrices. For scalar kernels acting entrywise on the matrix dynamics, we establish weak existence by exploiting stochastic invariance results for Volterra equations on convex domains and derive a conditional Fourier--Laplace transform formula characterized by matrix-valued Riccati--Volterra equations.
    As an application, we extend the Gibson--Schwartz commodity model by replacing its variance-covariance structure with a Volterra--Wishart process. The resulting model allows for memory in the variances and for stochastic instantaneous correlation, while retaining affine tractability. Its joint Fourier--Laplace transform admits an exponential-affine representation governed by a matrix Riccati--Volterra equation. While the existence theory considered here excludes kernels that are singular at the origin, shifted fractional kernels remain admissible and provide a tractable specification with power-law memory.
	
	\medskip

	\noindent\textbf{Keywords:} Stochastic Volterra equations, affine processes, Wishart process, commodity markets, convenience yield, multivariate stochastic volatility.
	\medskip

	\noindent\textbf{2020 Mathematics Subject Classification:} 60H20, 45D05, 91G30.
\end{abstract}

\section{Introduction}
This paper studies a stochastic Volterra equation approach to affine processes on the cone of positive semidefinite symmetric $ d \times d$ matrices $\bS_d^+$, with an application to commodity markets.

Matrix-valued stochastic processes on $\bS_d^+$ provide a natural generalization of variance processes to the multivariate case and additionally allow for modeling of stochastic covariances.
Affine models constitute one of the most widely used model class in finance since they allow for rich dynamics while remaining analytically tractable. See also the seminal paper \cite{duffie_Affine_2003} for a first systematic treatment of
affine processes in a financial context.
The tractability of Markovian affine processes stems from the fact that their Fourier--Laplace transform has an exponential-affine dependence on the state, characterized by Riccati equations.
Such processes enjoy widespread applications in finance, for example in multi-asset option pricing, portfolio and risk
management and more recently also in commodity markets.

For the multivariate case a large part of the literature considers the following affine dynamics for a process
$X$ on $\bS_d^+$:
\begin{equation}
	dX_t = (b + MX_t + X_t M^\top)dt + \sqrt{X_t} dW_t Q + Q^\top dW_t^\top \sqrt{X}, \quad X_0 \in \bS_d^+,
\end{equation}
where $b$ is a suitably chosen matrix in $\bS_d^+$, $M, Q$ are matrices, and $W$ is a standard $d \times d$-matrix
of Brownian motions. Processes of this type were first studied by \cite{bru_Wishart_1991} and are called Wishart processes.
The main reason for the analytical tractability of this model is that the following exponential-affine transform formula holds:
\begin{equation}\label{intro:wishart-laplace}
	E \bigl[ \exp(- \Tr(uX_T)) \mid \cF_t \bigr] = \exp \bigl(- \phi(T-t,u) - \Tr(\psi(T-t,u)X_t) \bigr),
\end{equation}
for all $t \in [0,T]$ and $u \in \bS_d^+$. The functions $\phi$ and $\psi$ solve a system of deterministic Riccati equations determined by the model parameters.

Wishart processes arise naturally by squaring particular Ornstein--Uhlenbeck processes. The broader framework of affine processes on $\bS_d^+$, as well as on general symmetric cones, was systematically developed in \cite{cuchiero_Affine_2011,cuchiero_Affine_2016}, allowing in particular for more general drift dynamics than in the Wishart setting.

More recently, there has been increasing interest in tractable models that go beyond the Markovian setting, in particular following \cite{gatheral_Volatility_2018}. They find that log-volatility exhibits rough behavior, with trajectories less regular than those generated by classical semimartingale models.
Empirical studies on electricity markets \cite{bennedsen_rough_2017} and on commodity markets \cite{alfeus_Forecasting_2022, alfeus_Dynamic_2023,alfeus_Forecasting_2020,daluiso_Rough_2026} also highlight this behavior.
Models based on fractional kernels, which go back to the construction of fractional Brownian motion in \cite{mandelbrot_Fractional_1968}, have been proposed to this end to better capture such volatility behavior. In particular, when the Hurst parameter $H < \tfrac{1}{2}$
these models reflect the observed rough and irregular behavior of volatility.

Applying such kernels to the classical affine models gives rise to affine Volterra processes.
The theory of affine Volterra processes was recently developed by \cite{abijaber_Affine_2019} in the homogeneous case and extended by \cite{ackermann_Inhomogeneous_2022} to the inhomogeneous case. Affine Volterra processes on $\R^d$ are defined as solutions of stochastic convolution equations
\begin{equation}
	X_t = \int_{0}^{t} K(t-s) b(X_s) \, ds + \int_{0}^{t} K(t-s)\sigma(X_s) \, dW_s,
\end{equation}
with affine coefficients
\begin{align*}
	a(x) = \sigma(x)\sigma(x)^\top & = A^0 + x_1A^1 + \dotsm + x_d A^d, \\
	b(x)                           & = b^0 + x_1b^1 + \dotsm + x_d b^d,
\end{align*}
for symmetric matrices $A^i$ and vectors $b^i$, and with kernels $K$ that may be singular, such as fractional kernels. Although affine Volterra processes are generally non-Markovian and in general need not be semimartingales, they preserve an exponential-affine transform formula in terms of Riccati--Volterra equations. Importantly, the transform formula remains affine with respect to the past path, making them tractable models for volatility models beyond the Markovian setting.

In this paper we study affine Volterra processes on $\bS_d^+$ defined by stochastic Volterra equations of the form
\begin{align}\label{eq:Volterra--Wishart-intro}
	\begin{split}
		X_t & = X_0 + \int_{0}^{t} K(t-s) b(X_s) \, ds + \int_{0}^{t} K(t-s) \Bigl( \sqrt{X_s}dW_s Q + Q^\top dW_s^\top \sqrt{X_s} \Bigr), \quad X_0 \in \bS_d^+.
	\end{split}
\end{align}
We show that, similar to the vector-valued Volterra case, this class remains affine and therefore provides a natural candidate for tractable stochastic volatility and covariance modeling.
We also give sufficient conditions on the parameters and the kernel that ensure existence of an $\bS_d^+$-valued weak solution, using existing viability results for stochastic Volterra equations on convex cones, see \cite{alfonsi_Nonnegativity_2025,abijaber_Weak_2026}. This is an essential point for the direct stochastic Volterra equation approach, since the positive semidefinite constraint is not built into the construction.
When $K$ is the identity matrix, the model reduces to the classical affine matrix diffusion studied in \cite{cuchiero_Affine_2011}.

By identifying $\bS_d$ with a Euclidean space, the affine Volterra framework developed in \cite{abijaber_Affine_2019,ackermann_Inhomogeneous_2022} can in principle be applied to matrix-valued processes. We nevertheless formulate the results directly on $\bS_d^+$, since this preserves the cone geometry and the algebraic structure of Wishart-type coefficients, and yields matrix Riccati--Volterra equations directly in their natural form.

Specifying $X$ as an $\bS_d^+$-valued process is essential for its interpretation as a covariance process. At the same time, the model should be flexible enough to capture more general dynamics and, in particular, memory effects in the covariance process.
In the matrix-valued setting, such memory effects are not restricted to the individual variances, but also enter the covariance structure. The model can therefore describe persistent dependence in both marginal volatilities and their co-movement.
A restriction of the proposed stochastic Volterra equation \eqref{eq:Volterra--Wishart-intro}, as already highlighted in \cite{abijaber_Weak_2026}, is that our sufficient conditions do not cover kernels that are singular at the origin, and hence excludes the fractional kernels commonly used in rough volatility models. In the applications, we therefore consider shifted fractional kernels. These regularize the singularity at the origin while retaining the characteristic power-law decay of fractional kernels away from the origin, thereby allowing us to incorporate power-law memory within the admissible class of kernels.

The main focus of this paper is therefore not the construction of genuinely rough matrix-valued Volterra processes, but rather the associated matrix transform theory, the analysis of the corresponding Riccati--Volterra equations, and their application to commodity markets.

The Volterra--Wishart process is the particular subcase in which the drift of \eqref{eq:Volterra--Wishart-intro} matches that of the classical Wishart process. Other definitions of Volterra--Wishart processes have already been proposed in the literature. \cite{jaber_Laplace_2022} defines a Volterra--Wishart process as the square $XX^\top$, where $X$ is a matrix-valued Gaussian Volterra process of the form
$$X_t = g(t) + \int_{0}^{t} K(t-s) \, dW_s.$$
\cite{cuchiero_Markovian_2019} construct the Volterra--Wishart processes from squares of infinite-dimensional Ornstein--Uhlenbeck processes, whose state space consists of matrix-valued measures.

The Volterra--Wishart process considered here is instead defined directly as the solution of a matrix-valued stochastic Volterra equation and therefore extends beyond quadratic constructions of Gaussian processes. This formulation allows for affine drift structures analogous to the classical Wishart process and provides a flexible non-Markovian covariance model. In contrast to the quadratic constructions, however, positive semidefiniteness is no longer automatic and must be established directly. Under the invariance conditions used in this paper, this direct construction requires kernels that are finite at the origin, whereas the quadratic constructions above can accommodate singular kernels. The two approaches therefore provide different advantages: quadratic constructions naturally preserve positive semidefiniteness and allow for singular kernels, while the direct stochastic Volterra equation formulation gives more flexibility in specifying affine matrix dynamics.

As an application we consider a popular commodity model and extend its volatility term by a Volterra--Wishart variance-covariance process.
This application is inspired by the recent work of \cite{schneider_Revisiting_2024}, where they extend the classical two-factor \cite{gibson_Stochastic_1990} and \cite{schwartz_ShortTerm_2000} commodity models by a Wishart variance-covariance process. Their extension introduces stochastic volatility and stochastic correlation while preserving analytical tractability through exponential-affine characteristic functions, which they use for derivative pricing and the construction of state-space representations for model estimation.
In these commodity models, future prices are described by the joint stochastic evolution of the spot price and convenience yield over time with constant volatility parameters. The convenience yield models the benefit of immediate ownership of a physical commodity. This benefit arises because holding inventories allows firms to respond quickly and efficiently to unexpected changes in supply or demand.
The convenience yield follows a mean-reverting, Ornstein--Uhlenbeck process, while the drift of the spot price is given by the short rate minus the convenience yield. These models fall under the same affine framework as studied by \cite{duffie_Affine_2003}, which explains their tractability and popularity. There is strong empirical evidence that volatility should be time-varying, e.g. because of volatility clustering or because the implied volatility surface cannot be matched by constant volatility specifications. Consequently, several articles have proposed three-factor commodity models with stochastic volatility \cite{deng_Stochastic_,hikspoors_Asymptotic_2008,hughen_maximal_2010,chen_Pricing_2017}, notably including \cite{richter_Stochastic_2002}, where they introduce a commodity model with stochastic volatility \`a la Heston.

In \cite{schneider_Revisiting_2024} they go one step further and extend the Gibson--Schwartz and Schwartz--Smith models by a Wishart variance-covariance process. They show that these models, now extended by a matrix-valued affine stochastic volatility, retain their tractability through an exponential-affine transform representation analogous to \eqref{intro:wishart-laplace}.
This extension is motivated by the fact that, according to \cite{routledge_Equilibrium_2000}, the relationship between the spot price and the convenience yield is not static but changes over time, reflecting shifts in underlying market conditions and inventory levels.
Empirical studies, such as \cite{omura_Convenience_2015}, demonstrate that spot prices and convenience yields exhibit stronger positive co-movement when inventories are low, consistent with periods of backwardation, while the correlation weakens when inventories are more abundant, in line with storage theory.
Moreover, \cite{pindyck_Volatility_2004} highlights that periods of heightened volatility are typically associated with simultaneous increases in both the spot price and the convenience yield.
In other words, higher uncertainty tends to raise commodity prices and increase the benefit of holding inventories in the short term.
This naturally suggests a time-varying instantaneous correlation structure. Taken together, these findings provide strong motivation for employing a Wishart process to describe the variance-covariance structure, as it accommodates both stochastic volatility and time-varying correlations in a coherent multivariate setting.

From an economic perspective, a Wishart specification also has strong intuitive appeal.
In modern electricity markets with a high penetration of renewable energy, there are often multiple sources of uncertainty acting simultaneously: weather-dependent renewable output, fuel price shocks, demand fluctuations, and regulatory interventions.
These factors do not affect the spot price and the convenience yield in isolation but rather generate complex patterns of co-movement and volatility spillovers across the two processes.
The concept of volatility spillovers was first introduced in the context of financial markets by \cite{diebold_Measuring_2009, diebold_Better_2012} and has since been further explored for commodity markets by \cite{barbaglia_Volatility_2020, ma_Does_2022, schischke_Impact_2025}.
For example, a sudden drop in wind generation can simultaneously cause a spike in the spot price and increase the value of reserve generation capacity, which is directly linked to the convenience yield.
Such joint effects are naturally captured by a multivariate stochastic volatility model such as the Wishart process.

The Volterra extension adds another dimension to this modeling framework. In addition to allowing variances and correlations to evolve stochastically, it permits shocks to the variance-covariance structure to have persistent effects on future market dynamics. This is particularly natural in commodity markets, where volatility and dependence structures may reflect persistent effects of inventory conditions, supply disruptions, or other market-specific shocks.
In line with the idea of \cite{schneider_Revisiting_2024}, we extend the Gibson--Schwartz model by a Volterra--Wishart variance-covariance process and show that the corresponding joint Fourier--Laplace transform remains exponential-affine and is characterized by matrix Riccati--Volterra equations.

\medskip
The remainder of the paper is structured as follows. Section \ref{section:affine-volterra} defines and studies the main process considered in this paper. In Section \ref{section:Fourier--Laplace}, we establish the exponential-affine transform formula and derive its path-dependent representation. Finally, Section \ref{section:Commodity-models} extends the Gibson--Schwartz commodity model with a Volterra--Wishart variance-covariance process and shows that their joint Fourier--Laplace transform retains an exponential-affine representation.

\subsection{Notation and preliminaries}

Here, we introduce the notation used throughout and recalls key properties of matrix-valued equations.
We begin by fixing some conventions.

Throughout the paper, we view elements of $\R^m$ and $\C^m = \R^m + i\R^m$ as column vectors, while elements of the dual spaces $(\R^m)^\ast$ and $(\C^m)^\ast$ are viewed as row vectors.
For every $m,n\in \N$ we denote by $\lVert \cdot\rVert$ the Frobenius norm $\lVert \cdot \rVert \colon \C^{m\times n} \to [0,\infty)$ by
$$\lVert A\rVert^2= \sum_{k = 1}^m \sum_{j = 1}^n (\re A_{kj})^2 + (\im A_{kj})^2,$$
where $A = (A_{kj})_{k =1,\ldots,m; j = 1,\ldots,n} \in \C^{m\times n}$.
For $m,n \in \N$ and $A = (A_{kj})_{k =1,\ldots,m; j = 1,\ldots,n} \in \C^{m \times n}$ we denote by $A^\top = (A_{jk})_{j =1,\ldots,n; k = 1,\ldots,m} \in \C^{n\times m}$ the transpose of $A$.
For $T \in (0,\infty)$, $m,n \in\N$, $f\colon [0,T] \to \C^{n\times m}$ and $h \in [0,T]$ the shifted function $\Delta_h f\colon [0,T] \to \C^{n \times m}$ satisfies $\Delta_h f(t) = f((t+h)\wedge T)$ for all $t \in [0,T]$.
We denote for $T \in (0,\infty)$, $m,n \in \N$ and $f\colon [0,T] \to \C^{n \times m}$ the total variation of $f$ over $[0,T]$ by $$\| f \|_{TV} = \sup \sum_{j} \| f(t_{j+1}) - f(t_j) \|$$
where the supremum is taken over all partitions $0\leq t_1 <\ldots < t_N \leq T$, $N \in \N$.
For a fixed dimension $d \in \N$, we denote by $\bS_d$ the space of symmetric real-valued $d \times d$ matrices, equipped with the scalar product $\langle x, y \rangle = \Tr(xy)$ for $x, y \in \bS_d$. When considering complex-valued ($i\R$-valued) entries, we write $\bS_d(\C)$ ($\bS_d(i\R)$).
The space $\bS_d$ is isomorphic, though not isometric, to $\R^{d(d+1)/2}$. A standard basis of $\bS_d$ is given by $\{c^{ij}, i \leq j\}$, where the $(kl)$-th component of $c^{ij}$ is $c^{ij}_{kl} = \delta_{ik}\delta_{jl} + \delta_{jk}\delta_{il}(1-\delta_{ij})$, and $\delta_{ij}$ denotes the Kronecker delta. We write $\bS_d^+$ for the cone of symmetric positive semidefinite matrices. Recall that $\bS_d^+$ is self-dual with respect to the above scalar product, meaning that
\begin{equation*}
	\bS_d^+ = \{x \in \bS_d \mid \langle x, y \rangle \geq 0 \ \text{for all } y \in \bS_d^+\}.
\end{equation*}
The cone $\bS_d^+$ induces a partial order relation on $\bS_d$. We write $x \succeq y$ if $x-y \in \bS_d^+$.
Furthermore, we denote by $\GL_d$ the general linear group over $\R$, and by $\Id_d$ the $d \times d$ identity matrix.

Convolutions play a central role in the theory of Volterra processes. Let $K: \R_+ \to \R$ be a measurable function and $L$ a measure on $\R_+$ of locally bounded variation. The left and right convolutions of $K$ and $L$ are defined for $t > 0$ by
\begin{equation*}
	(K \ast L)(t) = \int_{[0,t]} K(t-s)\,L(ds),
	\qquad
	(L \ast K)(t) = \int_{[0,t]} L(ds)\,K(t-s),
\end{equation*}
whenever these expressions are well-defined, and are extended to $t = 0$ by right-continuity whenever possible. If $K$ and $L$ are matrix-valued, only one of these convolutions may be well-defined, depending on the dimensions of the matrices involved. When $F: \R_+ \to \R$ is a measurable function, we use the shorthand $K \ast F = K \ast (F dt)$, which gives
\begin{equation*}
	(K \ast F)(t) = \int_{0}^{t} K(t-s) F(s)\, ds.
\end{equation*}
In particular, if $K \in L^1(\R_+)$ and $F$ is continuous, then $K \ast F$ is also continuous. This notion naturally extends to stochastic processes.
Let $K$ be square integrable on $[0,T]$ and $Z$ a continuous $d \times d$ semimartingale. The stochastic convolution of $K$ with $Z$
\begin{equation}\label{def:ast-local-martingale}
	(K \ast dZ)(t) = \int_{[0,t]} K(t-s) \, dZ_s,
\end{equation}
is well-defined as an It\^o integral for any $t \geq 0$ provided that
\begin{equation*}
	\int_{0}^{t} |K(t-s)|^2 \, d\Tr \langle Z \rangle_s < \infty.
\end{equation*}
In particular, if $K \in L^2(\R_+)$ and the quadratic variation of $Z$ has the form $\langle Z \rangle_s = \int_{0}^{s} a_u \, du$ for some locally bounded process $a$, then the stochastic convolution \eqref{def:ast-local-martingale} is well-defined for every $t \geq 0$.

Two matrix operators will play an important role throughout this article: the trace and the vectorization operators. The trace operator is linear and invariant under cyclic permutations: for any square matrices $A,B,C,D$, we have
\begin{equation*}
	\Tr(ABCD) = \Tr(DABC) = \Tr(CDAB) = \Tr(BCDA).
\end{equation*}
The vectorization operator, denoted by $\vc(\cdot)$, maps a $d \times d$ matrix to a vector in $\C^{d^2}$ by stacking its columns. Equipped with the Frobenius inner product, this mapping is unitary in the sense that
\begin{equation*}
	\Tr(A^\cplxtrans B) = \vc(A)^\cplxtrans \vc(B),
\end{equation*}
where $\cplxtrans$ denotes the conjugate transpose. Moreover, vectorization provides a convenient way to express matrix multiplication as a linear transformation. For matrices $A, B, C$ of compatible dimensions, one has
\begin{equation*}
	\vc(ABC) = (C^\top \otimes A)\, \vc(B),
\end{equation*}
where $\otimes$ denotes the Kronecker product. The Kronecker product satisfies $(A \otimes B)^\top = A^\top \otimes B^\top$, and it allows us to rewrite matrix equations in a linear form:
\begin{equation*}
	ABC = D \quad \Longleftrightarrow \quad (C^\top \otimes A)\, \vc(B) = \vc(D).
\end{equation*}
This representation will be particularly useful in the calculations and proofs presented in the later sections. 

\section{Affine Volterra processes on positive semidefinite matrices}\label{section:affine-volterra}

In this section, we study affine Volterra processes whose state space is the cone of symmetric positive semidefinite matrices.
Furthermore, we establish several preparatory results that were previously proven by \cite{abijaber_Affine_2019} and \cite{ackermann_Inhomogeneous_2022} in the vector-valued case.
Since we are working in the matrix-valued setting, these results need to be adapted accordingly to fit this setting.

The canonical equation underlying this setting is
\begin{align}
	\begin{split}
		X_t = X_0 + \int_{0}^{t} K(t-s)\bigl( \beta + MX_s + X_s M^\top\bigr) ds + \int_{0}^{t} K(t-s) \bigl(\sqrt{X_s}dW_s Q + Q^\top dW_s^\top \sqrt{X_s} \bigr),
	\end{split}
\end{align}
for a convolution kernel $K \in \loc^2(\R_+, \R^{d \times d})$, some suitable chosen matrices $\beta,\, Q,\, M$ and a standard $d \times d$-matrix of Brownian motions $W$ with initial condition $X_0 \in \bS_d^+$. Whenever this equation admits an $\bS_d^+$-valued solution, we refer to the corresponding process as a Volterra--Wishart process.
When the kernel $K$ is chosen to be the identity matrix, the equation reduces to the classical Markovian Wishart SDE introduced by \cite{bru_Wishart_1991}, thereby recovering the standard affine framework as a special case.

Before stating the main definition of an $\bS_d^+$-valued affine Volterra process, we give two structural limitations that arise when one attempts to construct such processes directly from the stochastic Volterra equation by means of existing stochastic-invariance arguments. These restrictions should be understood as sufficient conditions, rather than as necessary conditions for the existence of an $\bS_d^+$-valued solution.

\begin{enumerate}
	\item[(i)]\emph{Restrictions on the matrix structure of the kernel.}
	
	The available boundary-invariance arguments in the literature can be applied directly when the kernel acts compatibly with the geometry of $\bS_d^+$. A convenient sufficient condition, which we impose in the Volterra--Wishart setting, is
	\begin{equation*}
	K(t) = k(t)\Id_d, \quad t \geq 0,
	\end{equation*}
	for some nonnegative scalar-valued kernel $k$. This restriction does not imply that more general matrix-valued kernels necessarily destroy positivity. Rather, for a general matrix-valued kernel, the usual tangency argument for the diffusion coefficient no longer applies directly.
Indeed, suppose that, at some time $\tau$, the process reaches the boundary of the cone, $X_\tau \in \partial \bS_d^+$. Then there exists a nonzero vector $v \in \ker(X_\tau)$, and hence $\sqrt{X_\tau} v = 0$. Stochastic invariance requires that the diffusion does not generate fluctuations in the outward normal direction corresponding to $vv^\top$, or equivalently that its contribution to the scalar quantity $v^\top Xv$ vanishes at the boundary. If $K(r) = k(r)\Id_d$, then, for every matrix $A \in \R^{d \times d}$,
\begin{equation*}
	v^\top K(r) \bigl( \sqrt{X_\tau}AQ + Q^\top A^\top\sqrt{X_\tau} \bigr)v = k(r)v^\top\bigl(\sqrt{X_\tau}AQ + Q^\top A^\top\sqrt{X_\tau} \bigr)v = 0
\end{equation*}
since $\sqrt{X_\tau}v = 0$. Thus, the diffusion is tangent to the boundary in the relevant directions.
For a general matrix-valued kernel $K(r)$, however, the corresponding expression contains the term
\begin{equation*}
	v^\top K(r)\sqrt{X_\tau}AQv = \bigl(\sqrt{X_\tau}K(r)^\top v\bigr)^\top AQv,
\end{equation*}
which does not necessarily vanish, since $K(r)^\top v$ does not in general belong to $\ker(X_\tau)$. Consequently, the classical boundary argument does not immediately yield stochastic invariance. Additional structural assumptions on $K$, or a different invariance argument, would be required to treat genuinely matrix-valued kernels.

	\item[(ii)] \emph{Boundedness of the kernel.}

	A second restriction arises from the sufficient inward-pointing condition on the drift. For the classical Wishart SDE, a standard condition ensuring invariance of $\bS_d^+$ is $\beta-(d-1)Q^\top Q \in \bS_d^+$, see \cite{bru_Wishart_1991}. 
	In the Volterra--Wishart setting with convolution kernel $K(t) = k(t)\Id_d$, the analogous invariance calculation leads to the sufficient condition
	$$\beta - (d-1)k(0)Q^\top Q \in \bS_d^+.$$	
	as already highlighted in \cite{abijaber_Weak_2026}. While the general invariance results for stochastic Volterra equations in \cite{abijaber_Weak_2026, alfonsi_Nonnegativity_2025} allow for unbounded kernels, the above condition imposes an additional restriction in the affine setting. In particular, it excludes singular kernels such as the fractional kernel
	\begin{equation*}
		k(t) = \frac{t^{\alpha-1}}{\Gamma(\alpha)}, \quad \alpha \in (0,1),
	\end{equation*}
for which $k(0) = \infty$. For this reason, in the concrete applications below, we work instead with shifted fractional kernels, which retain the qualitative long-memory behavior of the fractional kernel while remaining bounded at the origin.
\end{enumerate}

Inspired by the results in \cite{cuchiero_Affine_2011} and with those mentioned limitations, we extend the Volterra--Wishart model class introduced above by allowing for a more general, fully affine drift term. This leads us to the following definition, which specifies the class studied in this paper.

\begin{definition}\label{def:affin-volterra}
	Fix $d \in \N$ and $T < \infty$. Let $K \in \loc^2(\R_+, \R^{d \times d})$ and let $W$ be a $d \times d$ matrix Brownian motion, let $Q \in \R^{d \times d}$ and set $\alpha := Q^\top Q \in \bS_d^+$, which parametrizes the diffusion covariance.
	A continuous process $X = (X_t)_{t \in [0,T]}$ is an \emph{affine Volterra process} on $\bS_d^+$ if it is a $\bS_d^+$-valued weak solution to
	\begin{align}\label{eq:main-SDE}
		X_t & = X_0 + \int_{0}^{t} K(t-s) b(X_s)\, ds + \int_{0}^{t} K(t-s) \Bigl(\sqrt{X_s}\, dW_s\, Q
		+ Q^\top dW_s^\top \sqrt{X_s}\Bigr),
	\end{align}
	with deterministic $X_0 \in \bS_d^+$ and an affine drift map $b:\bS_d\to\bS_d$,
	\begin{equation*}
		b(x)= b_0 + B(x), \quad x\in\bS_d,
	\end{equation*}
	for some $b_0\in\bS_d$ and a linear map $B:\bS_d\to\bS_d$.
\end{definition}

We deliberately do not impose a particular matrix structure on the kernel $K$ in Definition~\ref{def:affin-volterra}. The requirement that the process $X$ is an $\bS_d^+$-valued weak solution already imposes an implicit compatibility condition between the kernel and the coefficients. For an arbitrary matrix-valued kernels, neither symmetry nor positive semidefiniteness of the right-hand side is automatic.
In our existence result, we impose sufficient conditions of this type by restricting to kernels of the form $K(t) = k(t)\Id_d$, where $k$ is scalar-valued. This restriction is convenient for applying the stochastic invariance arguments available in the literature and will be sufficient for all applications considered below.
To this end, we impose the following admissibility conditions on the kernel and the drift coefficient, in analogy with the classical affine Wishart setting of \cite{cuchiero_Affine_2011}.

\begin{assumption}\label{ass:admissible-parameters}
    Let
    \begin{equation*}
        K(t) = k(t)\Id_d, \quad t \in [0,T],
    \end{equation*}
    for a nonnegative bounded scalar-valued kernel $k$.
    We assume that
    \begin{equation}\label{eq:affine-admissibility}
		b_0 \succeq k(0)(d-1)\alpha, \quad \text{ and }\quad  \Tr\bigl(B(x)u\bigr) \geq 0 \text{ for all } x,u \in \bS_d^+ \text{ with } \Tr(xu) = 0 .
	\end{equation}
\end{assumption}

\begin{remark}
Since the kernel $K$ in Definition~\ref{def:affin-volterra} acts only from the left in the stochastic convolution, additional conditions on the kernel are needed to ensure that the solution remains in the state space $\bS_d^+$. In particular, for a general matrix-valued kernel, even symmetry of the right-hand side is not automatic.
A natural way to accommodate more general matrix-valued kernels while preserving the symmetric matrix structure is to let the kernel act from both sides. More precisely, for a kernel 
$K \in \loc^4(\R_+,\R^{d \times d})$, one may consider
\begin{align*}
		X_t & = X_0 + \int_{0}^{t} K(t-s) b(X_s) K(t-s)^\top \, ds + \int_{0}^{t} K(t-s) \Bigl(\sqrt{X_s}\, dW_s\, Q
		+ Q^\top dW_s^\top \sqrt{X_s}\Bigr) K(t-s)^\top.
	\end{align*}
This sandwich structure is naturally compatible with the cone $\bS_d^+$. In particular, it preserves symmetry by construction, since for every $A \in \bS_d$ and every $K \in \R^{d \times d}$, we have $KAK^\top \in \bS_d$.
Moreover, for every $B \in \bS_d^+$ and every $K \in \R^{d \times d}$, we similarly have $KxK^\top \in \bS_d^+$.
Thus, the action of the matrix-valued kernel itself preserves both symmetry and positive semidefiniteness, independently of any diagonal or commutativity structure of $K$.

We emphasize, however, that this observation alone does not solve the stochastic invariance problem for the resulting Volterra equation. In particular, although the sandwich action preserves positive semidefiniteness of positive semidefinite matrices, the stochastic increments in the martingale term are not themselves positive semidefinite. Hence, one still has to verify an appropriate boundary tangency or stochastic invariance condition ensuring that the full dynamics remain in $\bS_d^+$.

Conditional on the existence of such an $\bS_d^+$-valued solution, the vectorization and affine transform arguments developed below can be adapted
to this formulation as well. The corresponding equations then contain the additional multiplication by $K^\top$. Equivalently, after vectorization, the convolution kernel is given by the Kronecker product $K \otimes K$. Since our applications only require diagonal kernels, we do not pursue this more general formulation here.
\end{remark}

\begin{remark}\label{rem:Aijkl}
	Let $M = (M_t)_{t \in [0,T]}$ denote the $\bS_d$-valued local martingale defined by
	\begin{equation*}
		dM_t
		:= \sqrt{X_t}\, dW_t Q + Q^\top dW_t^\top \sqrt{X_t},
		\qquad \alpha := Q^\top Q \in \bS_d^+ .
	\end{equation*}
	Then, for all $i,j,k,l\in\{1,\dots,d\}$, the quadratic covariation of the scalar
	components of $M$ satisfies
	\begin{equation*}
		d\langle M_{ij}, M_{kl}\rangle_t
		= A_{ijkl}(X_t)\,dt,
	\end{equation*}
	with
	\begin{equation*}
		A_{ijkl}(x)
		:= x_{ik}\alpha_{jl}+x_{il}\alpha_{jk}
		+x_{jk}\alpha_{il}+x_{jl}\alpha_{ik},
		\qquad x\in\bS_d .
	\end{equation*}
	Consequently, the mapping $x\mapsto A(x)$ is \emph{linear} on $\bS_d$, and its linear part is of the specific form
	\begin{equation*}
		\langle u, A(x)u\rangle = 4\langle x, u \alpha u \rangle, \quad x,u \in \bS_d.
	\end{equation*}
	Thus, in the sense of affine matrix diffusions \cite{cuchiero_Affine_2011}, $\alpha$ parametrizes a \emph{linear diffusion coefficient} on $\bS_d^+$.
	Furthermore, since $\bS_d$ is a finite-dimensional vector space, any linear map
	$B:\bS_d\to\bS_d$ admits a coordinate representation of the form
	\begin{equation*}
		B(x) = \sum_{i,j = 1}^d \beta^{ij} x_{ij},
		\qquad x\in\bS_d,
	\end{equation*}
	for a family of matrices $\beta^{ij} = \beta^{ji}\in\bS_d$.
	With respect to the Hilbert--Schmidt inner product
	\begin{equation*}
		\langle x,u\rangle := \Tr(xu), \qquad x,u\in\bS_d,
	\end{equation*}
	the adjoint operator $B^\top:\bS_d\to\bS_d$ is defined by
	\begin{equation*}
		\langle B(x),u\rangle = \langle x, B^\top(u)\rangle,
		\qquad x,u\in\bS_d.
	\end{equation*}
	Inserting the above representation of $B$ yields the characterization
	\begin{equation*}
		(B^\top(u))_{ij} = \langle \beta^{ij}, u\rangle,
		\qquad i,j\in\{1,\dots,d\}.
	\end{equation*}
\end{remark}

The structure given by \eqref{eq:affine-admissibility} ensures that the process preserves the positive semidefinite property of $X_t$ over time, in analogy to the classical Wishart setting.
In fact, the assumptions imposed on the drift and the stochastic integrals are essentially the same as those required in the classical theory of matrix-valued stochastic processes on \(\bS_d^+\).

Under these conditions, the process given by Definition \ref{def:affin-volterra} generalizes the Volterra--Wishart process by allowing for a fully affine drift structure while preserving the natural positivity constraints of the state space.

\begin{remark}\label{remark:sde-with-sigma}
	As shown in \cite{cuchiero_Affine_2011}, the stochastic affine Volterra equation \eqref{eq:main-SDE} can be rewritten in a more compact form by collecting the diffusion terms. Specifically, we can express it as
	\begin{align}\label{eq:compact-form}
		X_t = X_0 + \int_{0}^{t} K(t-s) b(X_s) \, ds
		+ \int_{0}^{t} K(t-s) \Big( \sum_{k,l = 1}^d \sigma^{kl}(X_s) \, dW_{s,kl} \Big),
	\end{align}
	where the matrices $\sigma^{kl}(x) \in \bS_d$ are given by
	\begin{equation*}
		\sigma^{kl}(x) = \sqrt{x} \, M^{kl} Q + Q^\top M^{lk} \sqrt{x}, \quad x \in \bS_d^+,
	\end{equation*}
	and the matrices $M^{kl} \in \R^{d \times d}$ are defined componentwise by
	\begin{equation*}
		M_{ij}^{kl} = \delta_{ik} \delta_{jl}, \quad 1 \leq i,j,k,l \leq d.
	\end{equation*}
	Its quadratic covariation is thus given by
	\begin{equation*}
		A_{ijkl}(x) = \sum_{m,n = 1}^d \sigma_{ij}^{mn}(x) \, \sigma_{kl}^{mn}(x), \quad x \in \bS_d^+.
	\end{equation*}
	It is worth mentioning that the coefficients $b$ and $\sigma^{kl}$, for every $k,l \in \{1, \dotsc, d\}$, are by construction continuous and also satisfy the linear growth condition
	\begin{equation}\label{eq:LG-matrix}
		\|b(x)\|
		+ \Bigl(\sum_{k,l = 1}^d \|\sigma_{kl}(x)\|^2\Bigr)^{1/2}
		\leq c_{LG}\bigl(1+\|x\|\bigr), \quad x \in \bS_d^+,
	\end{equation}
	for a positive constant $c_{LG}$.
\end{remark}

If we apply the vectorization operator to the representation \eqref{eq:compact-form} of the $\bS_d^+$-valued affine Volterra process \(X\), we can transform it into a vector-valued Volterra process.
This allows us to directly apply known existing results established in \cite{abijaber_Affine_2019,ackermann_Inhomogeneous_2022}.
Thus, we obtain for the vectorization of \eqref{eq:compact-form}
\begin{align*}
	Y_t
	 & = \vc(X_t) = Y_0
	+ \int_{0}^{t} \bigl(\Id_d \otimes K(t-s)\bigr) \vc(b(X_s))\,ds
	+ \int_{0}^{t} \bigl(\Id_d \otimes K(t-s)\bigr)
	\Bigl(\sum_{k,l = 1}^d \vc(\sigma^{kl}(X_s))\,dW_{s,kl}\Bigr)
\end{align*}
for $Y_0 = \vc(X_0)$.
Define the drift $\widetilde{b}(y) := \vc(b(x))$ for $y = \vc(x)$, the diffusion matrix
\begin{equation*}
	\widetilde\sigma(y)
	:= \bigl[\vc(\sigma^{11}(x)),\dots,\vc(\sigma^{dd}(x))\bigr]
	\in \R^{d^2\times d^2},
\end{equation*}
and the $d^2$-dimensional Brownian motion satisfying $\vc(W) = \widetilde{W}$.
Then $Y = (Y_t)_{t \in [0,T]}$ satisfies the $\R^{d^2}$ Volterra equation with kernel $\widetilde{K} := \bigl(\Id_d \otimes K\bigr) \in \loc^2(\R_+, \R^{d^2 \times d^2})$
\begin{equation}\label{eq:Y-volterra}
	Y_t = Y_0 + \int_{0}^{t} \widetilde{K}(t-s)\widetilde{b}(Y_s)\,ds + \int_{0}^{t} \widetilde{K}(t-s)\widetilde\sigma(Y_s)\,d\widetilde{W}_s, \quad t \in [0,T],
\end{equation}
which is precisely of the type studied in \cite{abijaber_Affine_2019}, on the state space $\vc(\bS_d^+)$. In particular, we obtain for the quadratic variation $A_{ijkl}(x) = (\sigma(x)\sigma(x)^\top)_{ijkl}$. In the case that the kernel is diagonal of the form $K = k \Id_d$ this reduces further to the $\R^{d^2}$ Volterra equation with kernel $k \in \loc^2(\R_+, \R_+)$
\begin{equation}\label{eq:Y-volterra-under-assumption}
	Y_t = Y_0 + \int_{0}^{t} k(t-s)\widetilde{b}(Y_s)\,ds + \int_{0}^{t} k(t-s)\widetilde\sigma(Y_s)\,d\widetilde{W}_s, \quad t \in [0,T],
\end{equation}

\begin{lemma}\label{lem:moment_bound}
	Fix $T < \infty$ and $p \geq 1$. Consider a continuous $\bS_d^+$-valued affine Volterra process $X = (X_t)_{t \in [0,T]}$. Then
	\begin{equation}\label{eq:moment_bound_wishart}
		\sup_{t \in [0,T]}E\bigl[\|X_t\|^{p}\bigr]\leq c,
	\end{equation}
	for a constant $c\in(0,\infty)$, depending only on $T, d^2, \|X_0\|, c_{LG}, \widetilde{K}$ and $p$, where $\widetilde{K} = \bigl(\Id_d \otimes K\bigr)$.
\end{lemma}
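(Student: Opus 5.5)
The plan is to reduce the statement to the vector-valued moment bound of \cite[Lemma 2.4]{abijaber_Affine_2019} through the vectorized equation \eqref{eq:Y-volterra}. Since $\|\vc(x)\| = \|x\|$ for the Frobenius norm, it suffices to bound $\sup_{t\le T}E[\|Y_t\|^p]$ for $Y = \vc(X)$. By Remark~\ref{remark:sde-with-sigma}, the coefficients $\widetilde b$ and $\widetilde\sigma$ satisfy the linear growth bound \eqref{eq:LG-matrix} on $\vc(\bS_d^+)$, with the same constant $c_{LG}$, because vectorization is an isometry. The kernel satisfies $\widetilde K \in \loc^2(\R_+,\R^{d^2\times d^2})$. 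Thus $Y$ is a continuous solution of a $d^2$-dimensional stochastic Volterra equation with linear-growth coefficients, which is exactly the setting of the cited lemma. That lemma gives the bound with a constant depending only on $T$, $d^2$, $\|Y_0\|$, $c_{LG}$, $\widetilde K$ and $p$.

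For completeness, I would also sketch the direct argument. First reduce to $p\ge2$ by Jensen's inequality. Then introduce the localization $\tau_n = \inf\{t : \|X_t\| \ge n\}$, which is well defined since $X$ is continuous. Set $f_n(t) = E[\|X_t\|^p\mathbf 1_{\{t<\tau_n\}}]$, which is finite.

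On $\{t<\tau_n\}$ one has $s<\tau_n$ for all $s\le t$, so $X_s$ can be replaced by the stopped quantities inside both integrals. Apply Jensen/H\"older to the $ds$-integral and the Burkholder--Davis--Gundy inequality to the stochastic convolution, for fixed $t$, as a martingale in its upper limit $r\mapsto \int_0^r \widetilde K(t-s)\widetilde\sigma(Y_s)\,d\widetilde W_s$. Combining these with the linear growth bound yields
\begin{equation*}
	f_n(t) \le c\Bigl(1 + \|X_0\|^p + \int_0^t \|\widetilde K(t-s)\|^2 f_n(s)\,ds\Bigr),
\end{equation*}
where the constant $c$ depends only on $p$, $T$, $c_{LG}$, $d$ and $\|\widetilde K\|_{L^2(0,T)}$.

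The main obstacle is closing this convolution inequality, because the kernel $\|\widetilde K\|^2$ is only in $\loc^1$ and a plain Gr\"onwall argument does not apply. I would use the resolvent version of Gr\"onwall's lemma, \cite[Theorem 9.8.2]{gripenberg}, as in \cite{abijaber_Affine_2019}. The resolvent of $c\|\widetilde K\|^2$ is nonnegative and integrable on $[0,T]$, so $f_n(t)\le C$ uniformly in $n$ and $t$. Monotone convergence as $n\to\infty$, using $\tau_n\to\infty$ by continuity, then gives \eqref{eq:moment_bound_wishart}.

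A minor technical point is that the BDG step needs the stopped integrand to be bounded. This holds by the choice of $\tau_n$ together with the linear growth bound. The stochastic convolution evaluated at time $t$ must be treated for each fixed $t$ separately, which is legitimate since only the moment at fixed $t$ is required.
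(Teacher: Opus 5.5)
Your proposal is correct and follows the paper's proof. Both arguments vectorize $X$, use that $\vc$ is a Frobenius isometry to carry the linear growth bound \eqref{eq:LG-matrix} over with the same constant $c_{LG}$, reduce to $p\ge 2$ by Jensen's inequality, and invoke the vector-valued moment bound of \cite{abijaber_Affine_2019}; the paper cites this as Lemma 3.1 there, not Lemma 2.4, and your localization/BDG/resolvent-Gr\"onwall sketch reproduces that lemma's proof.
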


\begin{proof}
	Throughout the proof let $p \in (2,\infty)$. The case $p \in [1,2]$ then follows from Jensen's inequality.
	We argue by reduction to the vector-valued case treated in \cite[Lemma 3.1]{abijaber_Affine_2019}.
	Using $\eqref{eq:Y-volterra}$ we know that $Y := \vc(X)$ can be expressed as
	\begin{equation*}
		Y_t = Y_0 + \int_{0}^{t} \widetilde{K}(t-s)\widetilde{b}(Y_s)\,ds + \int_{0}^{t} \widetilde{K}(t-s)\widetilde\sigma(Y_s)\,d\widetilde{W}_s, \quad t \in [0,T].
	\end{equation*}
	It remains to verify the linear growth assumption (3.1) of \cite[Lemma 3.1]{abijaber_Affine_2019}.
	For $y = \vc(x)$ we have
	\begin{equation*}
		\|\widetilde{b}(y)\| = \|\vc(b(x))\| = \|b(x)\|, \quad
		\|\widetilde\sigma(y)\|^2 = \sum_{k,l = 1}^d \|\vc(\sigma_{kl}(x))\|^2 = \sum_{k,l = 1}^d \|\sigma_{kl}(x)\|^2.
	\end{equation*}
	Using \eqref{eq:LG-matrix} and $\|y\| = \|x\|$,
	\begin{equation*}
		\|\widetilde{b}(y)\| + \|\widetilde\sigma(y)\| \leq c_{LG}\bigl(1+\|x\|\bigr) = c_{LG}\bigl(1+\|y\|\bigr),
	\end{equation*}
	which verifies the linear growth assumption. Continuity follows from continuity of $b$ and $\sigma_{kl}$.
	Hence, we can apply \cite[Lemma 3.1]{abijaber_Affine_2019} to \eqref{eq:Y-volterra}
	with dimension $d^2$, kernel $\widetilde{K}$, and linear growth constant $c_{LG}$.
	Therefore, for every $p \geq 2$ there exists $c<\infty$ depending only on $T, d^2, \|X_0\|, c_{LG}, \widetilde{K}$ and $p$
	such that
	\begin{equation*}
		\sup_{t \in [0,T]}E[\|Y_t\|^p] = \sup_{t \in [0,T]}E[\|X_t\|^p] \leq c,
	\end{equation*}
	since $\|Y_t\| = \|X_t\|$.
\end{proof}

The following lemma is a simple consequence of the representation $\eqref{eq:main-SDE}$ and will be used in the proof of Theorem \ref{thm:main-thm}.

\begin{lemma}\label{lm:cond-expec}
	Let $X$ be an affine Volterra process on $\bS_d^+$. Then for all $s,t \in [0,T]$ we have that
	\begin{align*}
		E[X_s \mid \cF_t] & = X_0 + \int_{0}^{s} K(s-r) b(E[X_r \mid \cF_t]) dr                                           \\
		                  & + \int_{0}^{t \wedge s} K(s-r) \left(\sqrt{X_r}dW_r Q + Q^\top (dW_r)^\top \sqrt{X_r}\right).
	\end{align*}
\end{lemma}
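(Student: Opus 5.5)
We condition both sides of \eqref{eq:main-SDE}, evaluated at time $s$, on $\cF_t$ and handle the three terms separately. The initial value $X_0$ is deterministic, so it is unchanged. For the drift term, we first use the moment bound of Lemma~\ref{lem:moment_bound} with $p=1$, together with $K\in\loc^2\subset\loc^1$ on $[0,T]$ and the linear growth \eqref{eq:LG-matrix}. This gives $E\int_0^s\|K(s-r)\|\,\|b(X_r)\|\,dr<\infty$, so conditional Fubini applies. Since $b$ is affine, it commutes with conditional expectation:
\begin{equation*}
E\Bigl[\int_0^s K(s-r)b(X_r)\,dr \,\Big|\, \cF_t\Bigr]=\int_0^s K(s-r)\,b\bigl(E[X_r\mid\cF_t]\bigr)\,dr .
\end{equation*}
Here we use a measurable version of $(r,\omega)\mapsto E[X_r\mid\cF_t]$, for instance obtained via the vectorized process $Y=\vc(X)$ and componentwise regular conditional expectations.

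For the stochastic term, fix $s$ and define
\begin{equation*}
N^s_u:=\int_0^{u\wedge s}K(s-r)\,dM_r, \qquad u\in[0,T],
\end{equation*}
with $M$ as in Remark~\ref{rem:Aijkl}. The integrand $K(s-\cdot)$ is deterministic and square integrable on $[0,s]$. By Remark~\ref{rem:Aijkl}, $d\langle M_{ij},M_{kl}\rangle_r=A_{ijkl}(X_r)\,dr$ with $A$ linear. Hence
\begin{equation*}
E\bigl[\langle N^s\rangle_T\bigr]\le C\int_0^s\|K(s-r)\|^2\,E\|X_r\|\,dr\le C'\int_0^s\|K(r)\|^2\,dr<\infty,
\end{equation*}
again by Lemma~\ref{lem:moment_bound}. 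It is convenient to verify this componentwise on the vectorized equation \eqref{eq:Y-volterra}. Consequently $N^s$ is a true square-integrable martingale on $[0,T]$, not merely a local martingale. Therefore $E[N^s_s\mid\cF_t]=N^s_{t\wedge s}$, which is exactly the truncated stochastic integral in the claim.

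Summing the three contributions gives the stated identity. In the case $t\ge s$ the identity reduces trivially to \eqref{eq:main-SDE}, because $X_s$ is $\cF_t$-measurable.

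The main obstacle is the true-martingale property of the stochastic convolution. It requires the integrability estimate above, which relies on the linearity of $x\mapsto A(x)$ and on the uniform moment bound. The only other point needing care is the joint measurability needed for conditional Fubini.
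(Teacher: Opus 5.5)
Your proposal is correct and follows essentially the same route as the paper. You handle the trivial case $t\ge s$ separately, use Lemma~\ref{lem:moment_bound} to show that $\int_0^{\cdot\wedge s}K(s-r)\,dM_r$ is a square-integrable martingale, and pass the conditional expectation through the affine drift. The differences are minor: you bound the quadratic variation via the linear map $A$ of Remark~\ref{rem:Aijkl} instead of the paper's Kronecker-product computation, and you spell out the conditional Fubini and measurability step that the paper leaves implicit.
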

\begin{proof}
	If $s < t$ we have by \eqref{eq:main-SDE}
	\begin{align*}
		E[X_s \mid \cF_t] & = X_s = X_0 + \int_{0}^{s} K(s-r) b(E[X_r \mid \cF_t]) dr + \int_{0}^{t \wedge s} K(s-r) \sqrt{X_r}dW_r Q \\
		                  & \quad + \int_{0}^{t \wedge s} K(s-r)Q^\top (dW_r)^\top \sqrt{X_r}.
	\end{align*}
	For every $s \in [0,T]$ let $M^s = (M_t^s)_{t \in [0,T]}$ be the process defined by
	\begin{equation*}
		M_t^s := \int_0^{t \wedge s} K(s-r)\sqrt{X_r}\,dW_r Q, \quad t \in [0,T].
	\end{equation*}
	Then, by vectorization,
	\begin{equation*}
		\vc(M_t^s)
		=
		\int_0^{t\wedge s}
		\big(Q^\top \otimes K(s-r)\sqrt{X_r}\big)\,d\vc(W_r).
	\end{equation*}
	Hence,
	\begin{align*}
		\Tr\big(\langle \vc(M^s) \rangle_T\big)
		 &= \int_{0}^{T \wedge s} \big\|Q^\top \otimes K(s-r)\sqrt{X_r}\big\|^2\,dr \\
		 &= \int_{0}^{T \wedge s} \|Q\|^2\,\|K(s-r)\sqrt{X_r}\|^2\,dr .
	\end{align*}
	Moreover,
	\begin{equation*}
		\|K(s-r)\sqrt{X_r}\|^2
		= \Tr\big(K(s-r)X_rK(s-r)^\top\big)
		\leq \|K(s-r)\|^2\Tr(X_r).
	\end{equation*}
	Consequently,
	\begin{equation*}
		\Tr \big(\langle \vc(M^s)\rangle_T\big)
		\leq \|Q\|^2\int_0^{T \wedge s} \|K(s-r)\|^2\,\Tr(X_r)\,dr,
	\end{equation*}
	and therefore, by Lemma \ref{lem:moment_bound} and since $K \in \loc^2(\R_+,\R^{d \times d})$,
	\begin{equation*}
		E\big[\Tr(\langle \vc(M^s)\rangle_T)\big] \leq \|Q\|^2 \Big(\sup_{r \in [0,T]} E[\Tr(X_r)]\Big) \int_{0}^{s} \|K(s-r)\|^2\,dr <\infty.
	\end{equation*}
	Thus, $M^s$ is a square-integrable martingale and similarly for
	\begin{equation*}
		\widetilde{M}_t^s := \int_0^{t\wedge s} K(s-r)Q^\top(dW_r)^\top\sqrt{X_r}.
	\end{equation*}

	Hence, in the case $s \geq t$ it holds by \eqref{eq:main-SDE} that
	\begin{align*}
		E[X_s \mid \cF_t] & = X_0 + E\left[\int_{0}^{s} K(s-r) b(X_r ) dr\mid \cF_t \right] + E\left[\int_{0}^{s} K(s-r) \sqrt{X_r}dW_r Q \mid \cF_t \right] \\
		                  & \quad + E\left[\int_{0}^{s} K(s-r)Q^\top (dW_r)^\top \sqrt{X_r}\mid \cF_t \right]                                                \\
		                  & = X_0 + \int_{0}^{s} K(s-r) b(E[X_r \mid \cF_t]) dr + \int_{0}^{t \wedge s} K(s-r) \sqrt{X_r}dW_r Q                  \\
		                  & \quad + \int_{0}^{t \wedge s} K(s-r)Q^\top (dW_r)^\top \sqrt{X_r}.
	\end{align*}
\end{proof}

The following condition expresses that the memory induced by the kernel does not destroy nonnegativity of previously accumulated contributions. It will therefore play a central role in the invariance arguments below.
\begin{definition}\label{def:nonnegativity-preserving-kernel}
	A nonnegative kernel $k: \R_+ \to \R_+$, such that $k(0) > 0$, is said to \emph{preserve nonnegativity} if for every $0 = t_0 < t_1 < \cdots < t_n$ and every $x_0, \ldots , x_n \in \R$ satisfying
\begin{equation*}
\sum_{j = 0}^{i} x_j k(t_i-t_j) \geq 0 , \quad i = 0,\ldots,n,
\end{equation*}
it follows that
\begin{equation*}
\sum_{t_j\leq t} x_j k(t-t_j) \geq 0, \quad t \geq 0.
\end{equation*}
\end{definition}

For the existence result we will cite the results of \cite{abijaber_Weak_2026, alfonsi_Nonnegativity_2025} which establish existence results of Volterra processes on convex domains. In \cite{alfonsi_Nonnegativity_2025} the existence is shown under Lipschitz assumptions on the coefficients under convolution kernels that preserve nonnegativity. \cite{abijaber_Weak_2026} generalizes these results in the sense that they consider also non-convolution kernels and only under linear growth conditions on the coefficients.

\begin{remark}
The assumptions on the kernel are satisfied by a large class of kernels arising in applications. In particular, every completely monotone kernel preserves nonnegativity \cite[Theorem 2.11]{alfonsi_Nonnegativity_2025}. Moreover, every completely monotone kernel is nonnegative and nonincreasing. Consequently, completely monotone kernels satisfy all structural assumptions of Assumption~\ref{ass:k-PSD-conditions}, except for the condition $k(0) > 0$, which has to be verified separately.
\end{remark}

Next, we impose regularity assumptions on the kernel $K$, which is the analogue to \cite[Condition (2.5)]{abijaber_Affine_2019}. But notice that since we only consider bounded kernels the sufficient assumption such that there exists a version of the convolution \eqref{def:ast-local-martingale} that is continuous in $t$ is the following.
\begin{assumption}\label{K-regularity-condition}
	Let $K = (K_{ij})_{i,j = 1,\dotsc,d} \in \loc^2(\R_+, \R^{d \times d})$. There exists $\gamma \in (0,1]$ and $c \in (0, \infty)$ such that for all $i,j \in \{1,\dotsc,d\}$ $\int_{s}^{t} |K_{ij}(t-u)|^2 \, du \leq c(t-s)^{\gamma}$ and$\int_{0}^{s} \left(K_{ij}(t-u)-K_{ij}(s-u)\right)^2 du \leq c(t-s)^\gamma$ for all
	$0 \leq s < t \leq T < \infty$.
\end{assumption}

For a continuous scalar kernel with $k(0) > 0$, the first integral is asymptotic to $k(0)^2 (t-s)$ as $t-s \downarrow 0$. Thus, an exponent of $\gamma > 1$ is unavailable in this setting. A continuously differentiable kernel satisfies the condition with $\gamma = 1$ on each compact interval.
The following result establishes weak existence of a solution to a
globally defined modification of \eqref{eq:main-SDE}. More precisely,
the matrix square root is replaced by the projection
$\sqrt{(\cdot)^+}$ onto $\bS_d^+$.

\begin{theorem}\label{thm:weak-exsitence-of-R-solutions}
    Let Assumption~\ref{ass:admissible-parameters} and
    Assumption~\ref{K-regularity-condition} hold. Then, for every
    $X_0\in\bS_d$, there exists a continuous $\bS_d$-valued weak
    solution to the modified stochastic Volterra equation
	\begin{equation*}
		X_t = X_0 + \int_{0}^{t} k(t-s) b(X_s) \,ds + \int_{0}^{t} k(t-s)\bigl(\sqrt{X_s^+} dW_s Q + Q^\top dW_s^\top \sqrt{X_s^+}\bigr), \quad t \in [0,T],
	\end{equation*}
    where $X^+$ denotes the orthogonal projection of $X \in \bS_d$ onto $\bS_d^+$.
    Moreover, the paths of $X$ are Hölder continuous of every order
    strictly smaller than $\gamma/2$, where $\gamma$ is the constant associated with $k$ and $T$ in Assumption~\ref{K-regularity-condition}.
\end{theorem}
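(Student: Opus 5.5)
The plan is to reduce the statement to a general weak existence theorem for $\R^m$-valued stochastic Volterra equations with continuous coefficients of linear growth, via the vectorization introduced before Lemma~\ref{lem:moment_bound}. Write $\widetilde b(y)=\vc(b(x))$ and
\[
\widetilde\sigma^+(y)=\bigl[\vc(\sigma^{11}_+(x)),\dots,\vc(\sigma^{dd}_+(x))\bigr],
\qquad
\sigma^{kl}_+(x)=\sqrt{x^+}M^{kl}Q+Q^\top M^{lk}\sqrt{x^+},
\]
for $y=\vc(x)$ and $x\in\bS_d$. Extend these maps to all of $\R^{d^2}$ by composing with the orthogonal projection $\pi$ of $\R^{d^2}$ onto the closed subspace $\vc(\bS_d)$. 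Then $Y=\vc(X)$ must solve \eqref{eq:Y-volterra-under-assumption} with $\widetilde\sigma$ replaced by $\widetilde\sigma^+$.

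First I check the hypotheses on the coefficients.
\begin{itemize}
\item $x\mapsto x^+$ is $1$-Lipschitz on $\bS_d$, since it is a projection onto a closed convex set.
\item $x\mapsto\sqrt{x}$ is continuous on $\bS_d^+$. It is in fact $\tfrac12$-Hölder in Frobenius norm by the operator monotonicity of the square root.
\end{itemize}
Hence $\widetilde b$ and $\widetilde\sigma^+$ are continuous. For linear growth, $b$ is affine, and
\[
\|\sqrt{x^+}\|^2=\Tr(x^+)\le \sqrt d\,\|x\|,
\]
so $\|\widetilde\sigma^+(y)\|\le c(1+\|y\|^{1/2})\le c_{LG}(1+\|y\|)$, exactly as in \eqref{eq:LG-matrix}. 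The kernel enters as the scalar $k$ multiplying $\Id_{d^2}$. It is bounded, nonnegative and satisfies Assumption~\ref{K-regularity-condition} with exponent $\gamma$. This matches the standing kernel condition (2.5) in \cite{abijaber_Affine_2019} and the conditions in \cite{abijaber_Weak_2026}.

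The core step is then to invoke a weak existence theorem for Volterra equations with continuous, linearly growing coefficients. The candidates are \cite[Theorem A.1]{abijaber_Affine_2019} or the corresponding result in \cite{abijaber_Weak_2026}. If a self-contained argument is preferred, I would run the standard scheme:
\begin{enumerate}
\item mollify $\widetilde b,\widetilde\sigma^+$ to Lipschitz coefficients $b_n,\sigma_n$ that have the same linear growth constant uniformly in $n$;
\item obtain strong solutions $Y^n$ by Picard iteration on the convolution equation;
\item derive uniform moment bounds $\sup_n\sup_t E\|Y^n_t\|^p<\infty$, by the argument of Lemma~\ref{lem:moment_bound};
\item use BDG together with Assumption~\ref{K-regularity-condition} to get the increment bounds $E\|Y^n_t-Y^n_s\|^p\le C|t-s|^{p\gamma/2}$;
\item conclude tightness in $C([0,T],\R^{d^2})$ by Kolmogorov's criterion.
\end{enumerate}
By Skorokhod, a subsequence converges almost surely on a new probability space to some $Y$. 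Identifying the limit as a weak solution means passing to the limit in the stochastic convolution. Since the kernel is a convolution kernel and not a semimartingale integrand, I would use the martingale problem for $Z^n_t=\int_0^t \sigma_n(Y^n_s)\,d\widetilde W_s$ and the identity $Y^n=Y_0+k\ast(b_n(Y^n)\,dt)+k\ast dZ^n$, which continues to hold in the limit. I expect this identification, and the construction of the limiting Brownian motion via a martingale representation theorem, to be the main technical obstacle. Citing the existing theorem avoids it.

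Finally I verify two properties of the solution.
\begin{itemize}
\item \emph{The solution is $\bS_d$-valued.} $Y_0=\vc(X_0)\in\vc(\bS_d)$, and both $\widetilde b$ and each column of $\widetilde\sigma^+$ take values in $\vc(\bS_d)$ because $b(x)$ and $\sigma^{kl}_+(x)$ are symmetric by construction. The Volterra equation therefore keeps $Y$ in this subspace, and $X:=\vc^{-1}(Y)$ is a continuous $\bS_d$-valued weak solution of the modified equation.
\item \emph{Hölder regularity.} The moment bounds give $E\|X_t-X_s\|^p\le C|t-s|^{p\gamma/2}$ for every $p$. Kolmogorov's criterion then yields a version that is Hölder of every order below $\gamma/2-1/p$. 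Letting $p\to\infty$ gives every order strictly below $\gamma/2$.
\end{itemize}

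Assumption~\ref{ass:admissible-parameters} is used here only through boundedness and nonnegativity of $k$. The positivity conditions on $b_0$ and $B$ are reserved for the later invariance argument, which shows that $X$ is in fact $\bS_d^+$-valued.
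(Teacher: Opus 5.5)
Your proposal is correct and is essentially the paper's proof, which consists only of invoking \cite[Theorem 2.5]{abijaber_Weak_2026}; your checks (continuity and linear growth of the projected coefficients, the kernel hypotheses, preservation of the symmetric subspace, and the H\"older order via Kolmogorov) are exactly what that citation implicitly relies on. Of your two candidate references, use that one: the existence theorem in the appendix of \cite{abijaber_Affine_2019}, as I recall it, additionally assumes that the kernel admits a resolvent of the first kind and uses it to identify the limit, and no such assumption is made in this theorem.
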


\begin{proof}
	This holds true by \cite[Theorem 2.5]{abijaber_Weak_2026}.
\end{proof}

It remains to show that, whenever the
initial value belongs to $\bS_d^+$, the solution never leaves the
cone. To this end we verify the assumptions of the stochastic
invariance theorem of \cite{abijaber_Weak_2026}.
A central assumption of this theorem concerns the stochastic invariance
of an auxiliary finite-dimensional stochastic differential equation.
More precisely, for every $\lambda>0$ and $x\in\bS_d^+$, consider the auxiliary stochastic differential equation
\begin{equation}\label{eq:auxiliary-wishart-sde}
\xi_t^{\lambda,x} = x + \int_{0}^{t} \lambda b(\xi_s^{\lambda,x}) \, ds + \int_{0}^{t} \lambda \biggl(\sqrt{\xi_s^{\lambda,x}}\,dW_sQ + Q^\top dW_s^\top\sqrt{\xi_s^{\lambda,x}} \biggr),
\end{equation}
where the function $b$ is specified as in Definition~\ref{def:affin-volterra}.
In \cite{abijaber_Weak_2026}, this auxiliary SDE is required for every $\lambda \in \{K(t,t):t \in [0,T]\}$.
Since we consider convolution kernels, it suffices to verify the auxiliary SDE condition for
$\lambda = k(0)$.

We say that SDE$_{\lambda}(\bS_d^+)$ holds if for every $x \in \bS_d^+$, there exists a weak solution $\xi^{\lambda,x}$ to \eqref{eq:auxiliary-wishart-sde} such that $\P\bigl(\xi_t^{\lambda,x} \in \bS_d^+ \text{ for all }t \geq 0 \bigr) = 1$.

Furthermore, we need the following condition on the kernel.
\begin{assumption}\label{ass:k-PSD-conditions}
    Let Assumption~\ref{ass:admissible-parameters} hold. We additionally
    assume that the scalar kernel $k$ satisfies $k(0)>0$, is continuous, nonincreasing, and preserves nonnegativity.
\end{assumption}

\begin{theorem}\label{thm:weak-solutions}
	Let $X_0 \in \bS_d^+$ and let Assumption~\ref{ass:k-PSD-conditions} and Assumption~\ref{K-regularity-condition} hold true. Then \eqref{eq:main-SDE} admits a continuous
    $\bS_d^+$-valued weak solution.
\end{theorem}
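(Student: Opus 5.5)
The plan is to combine Theorem~\ref{thm:weak-exsitence-of-R-solutions} with the stochastic invariance theorem of \cite{abijaber_Weak_2026} for Volterra equations on closed convex sets. By Theorem~\ref{thm:weak-exsitence-of-R-solutions}, for $X_0\in\bS_d^+$ there is a continuous $\bS_d$-valued weak solution $X$ of the modified equation with $\sqrt{X_s^+}$ in place of $\sqrt{X_s}$. If we show that $X_t\in\bS_d^+$ for all $t\in[0,T]$ almost surely, then $X^+=X$ along the paths. Hence $X$ solves \eqref{eq:main-SDE} with $K=k\Id_d$ and is the desired solution. So everything reduces to checking the hypotheses of the invariance theorem of \cite{abijaber_Weak_2026} for the convex domain $\bS_d^+$.

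The kernel hypotheses come first. Assumption~\ref{ass:k-PSD-conditions} gives that $k$ is nonnegative, continuous, nonincreasing, satisfies $k(0)>0$, and preserves nonnegativity in the sense of Definition~\ref{def:nonnegativity-preserving-kernel}. Assumption~\ref{K-regularity-condition} gives the H\"older-type regularity. The coefficients $x\mapsto b(x)$ and $x\mapsto \sqrt{x^+}\,\cdot\,Q+Q^\top\cdot\,\sqrt{x^+}$ are continuous with linear growth by \eqref{eq:LG-matrix}, because the projection onto $\bS_d^+$ is $1$-Lipschitz and $x\mapsto\sqrt{x}$ is continuous on $\bS_d^+$. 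Since the kernel is of convolution type, the diagonal values $K(t,t)=k(0)$ are constant. The auxiliary condition therefore reduces, as noted before \eqref{eq:auxiliary-wishart-sde}, to verifying SDE$_{\lambda}(\bS_d^+)$ for the single value $\lambda=k(0)$.

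The main step is to verify SDE$_{k(0)}(\bS_d^+)$. Equation \eqref{eq:auxiliary-wishart-sde} with $\lambda=k(0)$ is a classical affine diffusion on $\bS_d$. Its drift is $k(0)b_0+k(0)B(x)$ and its diffusion is $\sqrt{x}\,dW_s\,(k(0)Q)+(k(0)Q)^\top dW_s^\top\sqrt{x}$, with diffusion parameter $\widetilde\alpha=k(0)^2\alpha$. The plan is to invoke the existence theorem for affine processes on $\bS_d^+$ of \cite{cuchiero_Affine_2011}, whose admissibility conditions in this case are:
\begin{itemize}
\item[(a)] the constant drift satisfies $k(0)b_0\succeq (d-1)\widetilde\alpha$;
\item[(b)] the linear drift is inward pointing, i.e.\ $\Tr(k(0)B(x)u)\ge0$ whenever $x,u\in\bS_d^+$ and $\Tr(xu)=0$.
\end{itemize}
Condition (a) reads $k(0)b_0\succeq(d-1)k(0)^2\alpha$. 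Dividing by $k(0)>0$ turns it into $b_0\succeq k(0)(d-1)\alpha$, which is exactly the first part of \eqref{eq:affine-admissibility}. Condition (b) follows from the second part of \eqref{eq:affine-admissibility} after multiplying by $k(0)>0$. The resulting affine process gives an $\bS_d^+$-valued weak solution for every starting point $x\in\bS_d^+$, defined on all of $[0,\infty)$ by non-explosion of affine processes. This establishes SDE$_{k(0)}(\bS_d^+)$.

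The main obstacle I expect is the bookkeeping: matching exactly the form in which \cite{abijaber_Weak_2026} states its auxiliary SDE and its kernel conditions. Two points need care. First, the scaling $\lambda$ multiplies the diffusion coefficient, not its square, which is where the factor $k(0)$ in front of $(d-1)\alpha$ comes from. Second, one must check whether the invariance theorem requires the auxiliary SDE with the original coefficients or with the projected $\sqrt{x^+}$; on $\bS_d^+$ these coincide, so either version works. Once these hypotheses are confirmed, the invariance theorem yields $\P(X_t\in\bS_d^+ \ \forall t\in[0,T])=1$, and the conclusion follows as described in the first paragraph.
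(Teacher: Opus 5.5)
Your proposal is correct and follows the paper's proof essentially step for step. You reduce, via \cite[Theorem 2.14]{abijaber_Weak_2026}, to SDE$_{k(0)}(\bS_d^+)$, read \eqref{eq:auxiliary-wishart-sde} as an affine diffusion with drift $k(0)b(x)$ and covariance parameter $k(0)^2\alpha$, and check the admissibility conditions of \cite[Theorem 2.4]{cuchiero_Affine_2011}; after dividing by $k(0)>0$ (the paper multiplies instead), these are exactly \eqref{eq:affine-admissibility}. One phrasing point: the invariance theorem, as the paper uses it, produces an $\bS_d^+$-valued weak solution of the projected equation rather than showing that an arbitrary solution from Theorem~\ref{thm:weak-exsitence-of-R-solutions} stays in the cone (no uniqueness is available), so you should invoke it in that existence form.
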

\begin{proof}
	We apply \cite[Theorem 2.14]{abijaber_Weak_2026}. Since the kernel is of the form $K(t) = k(t)\Id_d$, the only point that remains to be checked is the invariance of the auxiliary SDE \eqref{eq:auxiliary-wishart-sde} for $\lambda = k(0)$.
	For this fixed $\lambda$, the auxiliary equation is an affine matrix diffusion with drift $\lambda b_0+\lambda B(x)$ and Wishart diffusion parameter $\lambda Q$, hence covariance parameter $\lambda^2\alpha$. By the stochastic invariance criterion for affine diffusions on $\bS_d^+$ in \cite[Theorem 2.4]{cuchiero_Affine_2011}, it is enough to verify
	\begin{equation*}
	   \lambda b_0 - (d-1) \lambda^2 \alpha \succeq 0 
	\end{equation*}
	and
	\begin{equation*}
	  \Tr(B(x)u) \geq 0 \text{ for all } x, u \in \bS_d^+ \text{ with }\Tr(xu) = 0.  
	\end{equation*}
	The second condition is exactly the second admissibility condition in \eqref{eq:affine-admissibility}. For the first one, Assumption~\ref{ass:k-PSD-conditions} implies that $k$ is nonincreasing, and therefore $k(0) = \sup_{t \in [0,T]} k(t)$. Hence, the first admissibility condition in \eqref{eq:affine-admissibility} gives $b_0 - (d-1) k(0) \alpha \succeq 0$.
	Multiplying by $\lambda = k(0) \geq 0$ yields the required inequality. Thus, SDE$_\lambda(\bS_d^+)$ holds, and \cite[Theorem 2.14]{abijaber_Weak_2026} yields a continuous weak solution of the projected stochastic Volterra
    equation from Theorem~\ref{thm:weak-exsitence-of-R-solutions} which takes values in $\bS_d^+$ almost surely. Therefore, this process is a continuous $\bS_d^+$-valued weak solution of the original equation \eqref{eq:main-SDE}.
\end{proof}

A fundamental tool in the analysis of Volterra-type equations is the so-called resolvent of the first kind.

\begin{definition}
	$K : \R_+ \to \R^{d \times d}$ is said to admit a resolvent of the first kind if there exists a $\R^{d \times d}$-valued measure $L$ on $[0,T]$ which is of bounded variation and satisfies $K \ast L = L \ast K = \Id_d$.
\end{definition}

The resolvent of the first kind does not always exist.
However, many kernels of practical interest, such as exponential or shifted fractional kernels, do admit one.
For a detailed treatment of resolvents and Volterra equations, we refer the reader to \cite{gripenberg_Volterra_1990}.

If the resolvent of the first kind exists for the kernel $K$, the following property holds and will play a central role in what follows.
The next lemma is a direct matrix-valued analogue of \cite[Lemma 2.6]{abijaber_Affine_2019}, and its proof follows along the same lines.

\begin{lemma}\label{lm:transform-of-F-dZ}
	Let $K \in \loc^1(\R_+, \R^{d \times d})$ and let $\widetilde{K} = \Id_d \otimes K$.
	Let $X$ be a continuous $\bS_d^+$-valued process and $Z = \int b(X) dt + \int (\sqrt{X}dW Q+Q^\top dW^\top \sqrt{X})$ a continuous semimartingale with
	$b$, and $\widetilde{K} \ast \vc(dZ)$ continuous and adapted. Assume that $\widetilde{K}$ admits a resolvent
	of the first kind $L$, i.e. $L \ast \widetilde{K} = \widetilde{K} \ast L \equiv \Id_{d^2}$. Then
	\begin{align}\label{eq:transform-of-K-dZ}
		\begin{split}
			X_t - X_0 = \int_{0}^{t} K(t-s) dZ_s \quad & \Longleftrightarrow \quad \vc(X_t - X_0) = \big(\widetilde{K} \ast \vc(dZ)\big)(t) \\
			                                           & \Longleftrightarrow \quad L \ast \vc(X - X_0) = \vc(Z).
		\end{split}
	\end{align}
	In this case, for any $F \in \loc^2(\R_+, \bS_d(\C))$ such that $\vc(F^\cplxtrans)^\cplxtrans \ast L$ is right continuous and of locally bounded variation, one has
	\begin{align}\label{eq:transform-of-F-dZ}
		\begin{split}
			\Tr(F \ast dZ) & = \vc(F^\cplxtrans)^\cplxtrans \ast \vc(dZ)                                                                                                           \\
			               & = (\vc(F^\cplxtrans)^\cplxtrans \ast L)(0)\vc(X) - (\vc(F^\cplxtrans)^\cplxtrans \ast L)\vc(X_0) + d(\vc(F^\cplxtrans)^\cplxtrans \ast L) \ast \vc(X)
		\end{split}
	\end{align}
	up to $dt \otimes P$-a.e. equivalence. If $F \ast dZ$ admits a right-continuous version, then \eqref{eq:transform-of-F-dZ} holds up to indistinguishability.
\end{lemma}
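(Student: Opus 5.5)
The plan is to reduce everything to the vector-valued statement \cite[Lemma 2.6]{abijaber_Affine_2019} by vectorization, exactly as for Lemma~\ref{lem:moment_bound}.

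\textbf{Step 1: the first equivalence in \eqref{eq:transform-of-K-dZ}.} Apply $\vc$ to $X_t-X_0=\int_0^t K(t-s)\,dZ_s$. By the identity $\vc(ABC)=(C^\top\otimes A)\vc(B)$ with $C=\Id_d$, we get $\vc(K(t-s)\,dZ_s)=(\Id_d\otimes K(t-s))\,\vc(dZ_s)$. Linearity of $\vc$ lets it commute with the Lebesgue and It\^o integrals. Since $\vc$ is a bijection, this gives $\vc(X_t-X_0)=(\widetilde K\ast\vc(dZ))(t)$.

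\textbf{Step 2: the second equivalence.} Use the resolvent, following the vector case. For the direction ``$\Rightarrow$'', convolve with $L$ and apply the stochastic Fubini theorem, which is justified since $L$ has bounded variation and $\widetilde K\in\loc^1$. Associativity then gives
\[
L\ast\vc(X-X_0)=L\ast(\widetilde K\ast\vc(dZ))=(L\ast\widetilde K)\ast\vc(dZ)=\Id_{d^2}\ast\vc(dZ)=\vc(Z)-\vc(Z_0)=\vc(Z),
\]
where $Z_0=0$. For the converse, convolve $L\ast\vc(X-X_0)=\vc(Z)$ with $\widetilde K$ and use $\widetilde K\ast L=\Id_{d^2}$. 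This yields $1\ast\vc(X-X_0)=\widetilde K\ast\vc(Z)=1\ast(\widetilde K\ast\vc(dZ))$, where the last identity is again Fubini. Both sides are continuous, so differentiating in $t$ gives the claim. All of this is precisely \cite[Lemma 2.6]{abijaber_Affine_2019} applied to the $d^2$-dimensional objects, so I would cite it after checking that its hypotheses (continuity and adaptedness of $\widetilde K\ast\vc(dZ)$, and the resolvent property) are exactly our assumptions.

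\textbf{Step 3: formula \eqref{eq:transform-of-F-dZ}.} First rewrite the trace. For $F\in\bS_d(\C)$ we have $\Tr(F\,dZ)=\vc(F^\cplxtrans)^\cplxtrans\vc(dZ)$ by unitarity of $\vc$, which gives $\Tr(F\ast dZ)=\vc(F^\cplxtrans)^\cplxtrans\ast\vc(dZ)$. Then set $G:=\vc(F^\cplxtrans)^\cplxtrans\in\loc^2(\R_+,(\C^{d^2})^\ast)$, a row-vector kernel, and apply the second part of \cite[Lemma 2.6]{abijaber_Affine_2019} with $F$ replaced by $G$, using the assumption that $G\ast L$ is right continuous and of locally bounded variation. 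The underlying computation is as follows:
\begin{itemize}
\item substitute $\vc(dZ)=d(L\ast\vc(X-X_0))$ and write $G\ast\vc(dZ)=(G\ast L)\ast d\vc(X-X_0)$ after justifying associativity;
\item integrate by parts against the bounded-variation function $G\ast L$, which produces $(G\ast L)(0)\vc(X)-(G\ast L)\vc(X_0)+d(G\ast L)\ast\vc(X)$.
\end{itemize}
This identity holds $dt\otimes P$-a.e. Upgrading to indistinguishability when $F\ast dZ$ has a right-continuous version works as usual: the right-hand side is right continuous, and two right-continuous processes that agree a.e.\ agree everywhere.

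The main obstacle I expect is bookkeeping rather than substance. One must check that the complex, row-vector kernel $G$ and the non-scalar $d^2\times d^2$ kernel $\widetilde K$ fit the setting of \cite[Lemma 2.6]{abijaber_Affine_2019}, which is stated for real matrix kernels. The real and imaginary parts of $G$ can be treated separately by linearity. The order of convolutions (left versus right) must be respected because $L$ and $\widetilde K$ need not commute with other matrices, which is why the hypothesis requires both $L\ast\widetilde K$ and $\widetilde K\ast L$ to equal the identity.
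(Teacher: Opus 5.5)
Your proposal is correct and is essentially the paper's proof: vectorize via $\vc(K\,dZ)=(\Id_d\otimes K)\vc(dZ)$ and then run the argument of \cite[Lemma 2.6]{abijaber_Affine_2019} on the $d^2$-dimensional objects, which the paper reproduces rather than cites. One caution on your sketch of Step 3: $X$ is in general not a semimartingale, so $(G\ast L)\ast d\vc(X-X_0)$ is not a stochastic convolution and no associativity lemma justifies that substitution; the paper, like the cited lemma, instead first derives $G=(G\ast L)(0)\widetilde K+d(G\ast L)\ast\widetilde K$ a.e.\ by convolving the bounded-variation decomposition of $G\ast L$ with $\widetilde K$, and then applies stochastic associativity against $Z$ together with $\widetilde K\ast\vc(dZ)=\vc(X-X_0)$, so no integral against $X$ is ever needed.
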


\begin{proof}
	The first equivalence in \eqref{eq:transform-of-K-dZ} follows immediately from the properties of the vectorization operator.
	Assume now that $\vc(X_t - X_0) = \big(\widetilde{K} \ast \vc(dZ)\big)(t)$, $t \in [0,T]$. Apply $L$ to both sides to get
	\begin{align*}
		L \ast \vc(X - X_0) = L \ast \big(\widetilde{K} \ast \vc(dZ)\big) = (L \ast \widetilde{K}) \ast \vc(dZ) = \Id_{d^2} \ast \vc(dZ) = \vc(Z),
	\end{align*}
	where the second equality follows from the associativity of the convolution. Conversely, assume $L \ast \vc(X - X_0) = \vc(Z)$. Then
	\begin{align*}
		\Id_{d^2} \ast \vc(X-X_0) & = (\widetilde{K} \ast L)\ast \vc(X-X_0)                \\
		                          & = \widetilde{K} \ast \big(L \ast \vc(X-X_0)\big)       \\
		                          & = \widetilde{K} \ast \vc(Z)                            \\
		                          & = \widetilde{K} \ast \big(\Id_{d^2} \ast \vc(dZ)\big)  \\
		                          & = \Id_{d^2} \ast \big(\widetilde{K} \ast \vc(dZ)\big),
	\end{align*}
	using \cite[Theorem 3.6.1(ix)]{gripenberg_Volterra_1990} for the second equality and associativity for the last equality. Since both $X-X_0$ and $\widetilde{K}\ast \vc(dZ)$ are
	continuous, they must be equal. This shows \eqref{eq:transform-of-K-dZ}. To prove \eqref{eq:transform-of-F-dZ}, observe that by the compatibility of the vectorization with inner products, we have
	\begin{equation*}
		\Tr(F \ast dZ) = \vc(F^\cplxtrans)^\cplxtrans \ast \vc(dZ).
	\end{equation*}
	The assumption that $\vc(F^\cplxtrans)^\cplxtrans \ast L$ is right-continuous and of locally bounded variation implies
	\begin{equation*}
		\vc(F^\cplxtrans)^\cplxtrans \ast L = (\vc(F^\cplxtrans)^\cplxtrans \ast L)(0) + d(\vc(F^\cplxtrans)^\cplxtrans \ast L) \ast \Id_{d^2}.
	\end{equation*}
	Convolving this with $\widetilde{K}$, using associativity of the convolution and comparing the densities of the resulting absolutely continuous functions, we deduce
	\begin{equation*}
		\vc(F^\cplxtrans)^\cplxtrans = (\vc(F^\cplxtrans)^\cplxtrans \ast L)(0)\widetilde{K} + d(\vc(F^\cplxtrans)^\cplxtrans \ast L) \ast \widetilde{K} \quad \text{a.e.}
	\end{equation*}
	By \cite[Lemma 2.1]{abijaber_Affine_2019}, convolution with respect to a semimartingale \eqref{def:ast-local-martingale} inherits the associativity property.
	Applying this associativity and using that $\vc(X_t - X_0) = \big(\widetilde{K} \ast \vc(dZ)\big)(t)$ by assumption, it follows that
	\begin{align*}
		\vc(F^\cplxtrans)^\cplxtrans \ast \vc(dZ) & = (\vc(F^\cplxtrans)^\cplxtrans \ast L)(0)\widetilde{K} \ast \vc(dZ) + d(\vc(F^\cplxtrans)^\cplxtrans \ast L) \ast (\widetilde{K} \ast \vc(dZ))       \\
		                                          & = (\vc(F^\cplxtrans)^\cplxtrans \ast L)(0)\vc(X) - (\vc(F^\cplxtrans)^\cplxtrans \ast L)\vc(X_0) + d(\vc(F^\cplxtrans)^\cplxtrans \ast L) \ast \vc(X)
	\end{align*}
	holds $dt \otimes P$-almost everywhere. The final statement is clear from right continuity of $\vc(F^\cplxtrans)^\cplxtrans \ast L$ and $d(\vc(F^\cplxtrans)^\cplxtrans \ast L) \ast X$.
\end{proof}

\section{Conditional Fourier--Laplace functional of affine Volterra process on symmetric and positive semidefinite matrices}\label{section:Fourier--Laplace}
Our main goal is to show that affine Volterra processes on $\bS_d^+$ are affine in the sense that their Fourier--Laplace transform admits a semi-closed form which is affine with respect to the past trajectory of the process,
following the results to the works of \cite{abijaber_Affine_2019} and \cite{ackermann_Inhomogeneous_2022}.

\begin{theorem}\label{thm:main-thm}
	Let $T < \infty$. Let $X= (X_t)_{t \in [0,T]}$ be an affine Volterra process on $\bS_d^+$ and fix some $u \in \bS_d(\C)$, and $f \in L^1([0,T], \bS_d(\C))$.
	Assume $\psi \in L^2([0,T], \bS_d(\C))$ solves the Riccati--Volterra equation
	\begin{equation}\label{eq:riccati_1}
		\psi(t) = uK(T-t) + \int_{t}^{T} \bigg(f(s) + B^\top (\psi(s)) - 2 \psi(s)\alpha \psi(s)\bigg) K(s-t) \, ds, \quad t \in [0,T].
	\end{equation}
	Then the process $\{Y_t : 0 \leq t \leq T\}$ defined by
	\begin{align}\label{eq:Y-t}
		\begin{split}
			Y_t & = Y_0 + \Tr\biggl( \int_{0}^{t} \psi(s) \sqrt{X_s} dW_s Q + \int_{0}^{t} Q^\top dW_s^\top\sqrt{X_s}\psi(s) + 2\int_{0}^{t} X_s\psi(s)\alpha\psi(s) \, ds\biggr) \\
			Y_0 & = \Tr\biggl(uX_0 + \int_{0}^{T} f(s)X_0 + \psi(s)b(X_0) - 2X_0\psi(s) \alpha \psi(s) \, ds \biggr)
		\end{split}
	\end{align}
	satisfies
	\begin{align}\label{eq:Y-tilde}
		Y_t & = \Tr \biggl(E\biggl[uX_T + \int_{0}^{T} f(s)X_s ds  \,\bigg|\, \cF_t\biggr] - 2 \int_{t}^{T} E\left[X_s \mid \cF_t\right] \psi(s)\alpha \psi(s) \,ds \biggr).
	\end{align}
	The process $\{\exp(-Y_t) : 0 \leq t \leq T\}$ is a local martingale and, if it is a true martingale, one has the exponential-affine transform formula
	\begin{equation}\label{eq:main-theorem-laplace-transform}
		E\left[\exp\biggl(-\Tr \Bigl(uX_T + \int_{0}^{T} f(s)X_s ds \Bigr) \biggr) \,\bigg|\, \cF_t\right] = \exp(-Y_t).
	\end{equation}
\end{theorem}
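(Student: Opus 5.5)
The plan follows the proof of \cite[Theorem 4.3]{abijaber_Affine_2019}, adapted to the matrix setting through the trace and vectorization identities. Set
\[
\xi_t := \Tr\Bigl(E\bigl[uX_T + \textstyle\int_0^T f(s)X_s\,ds \mid \cF_t\bigr]\Bigr) - 2\Tr\Bigl(\textstyle\int_t^T E[X_s\mid\cF_t]\psi(s)\alpha\psi(s)\,ds\Bigr),
\]
which is the right-hand side of \eqref{eq:Y-tilde}. By Lemma~\ref{lem:moment_bound} all conditional expectations are well defined. The first step is to show $\xi_t = Y_t$.

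For $s\ge t$, Lemma~\ref{lm:cond-expec} gives
\[
E[X_s\mid\cF_t] = X_0 + \int_0^s K(s-r)b(E[X_r\mid\cF_t])\,dr + \int_0^{t} K(s-r)\,dM_r,
\]
with $dM = \sqrt{X}dWQ + Q^\top dW^\top\sqrt{X}$. For $s<t$ we have $E[X_s\mid\cF_t]=X_s$. Write $g_t(s) := E[X_s\mid\cF_t]$. Then $\xi_t$ is a linear functional of $g_t$:
\[
\xi_t = \Tr\Bigl(u\,g_t(T) + \int_0^T f(s)g_t(s)\,ds - 2\int_t^T g_t(s)\psi(s)\alpha\psi(s)\,ds\Bigr).
\]
Substitute the Volterra representation of $g_t$ into this expression. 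Use Fubini (deterministic and stochastic; the stochastic version is justified by the square-integrability shown in Lemma~\ref{lm:cond-expec} together with $\psi\in L^2$) and the cyclicity of the trace. Collect the terms containing $b(g_t(r))$ and $dM_r$. The coefficient multiplying them from the left at time $r$ is
\[
uK(T-r) + \int_r^T \bigl(f(s) - 2\psi(s)\alpha\psi(s)\mathbf 1_{s\ge t}\bigr)K(s-r)\,ds .
\]
Here $K$ commutes with everything in the scalar case; in general one keeps the order consistent with \eqref{eq:riccati_1}. Split $b(g)=b_0+B(g)$ and move $B$ across the trace via $\Tr(\psi B(g)) = \Tr(B^\top(\psi)g)$. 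The Riccati--Volterra equation \eqref{eq:riccati_1} then makes the terms involving $g_t$ on $[t,T]$ telescope: after the $B^\top(\psi)$ contribution is absorbed, the coefficient of $g_t(r)$ for $r\ge t$ vanishes. What remains is
\[
\Tr\Bigl(\int_0^T \psi b(X_0)\,ds + \int_0^t \psi(r)\,dM_r\Bigr)
\]
plus terms in $X_0$ and in the realized path on $[0,t]$, including $\int_0^t X_s\,2\psi\alpha\psi\,ds$ arising from the indicator. Comparing with \eqref{eq:Y-t} gives the identity $\xi_t=Y_t$. Evaluating at $t=0$ gives $Y_0$ directly; comparing the $t$-dependent parts gives the martingale and $ds$ terms.

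The second step is the local martingale claim. From \eqref{eq:Y-t},
\[
dY_t = \Tr(\psi\,dM_t) + 2\Tr(X_t\psi\alpha\psi)\,dt .
\]
By Remark~\ref{rem:Aijkl},
\[
d\langle \Tr(\psi M)\rangle_t = \langle\psi, A(X_t)\psi\rangle\,dt = 4\Tr(X_t\psi\alpha\psi)\,dt .
\]
This remains valid for complex symmetric $\psi$ by bilinearity. It\^o's formula then gives
\[
d e^{-Y} = e^{-Y}\bigl(-dY + \tfrac12 d\langle Y\rangle\bigr) = -e^{-Y}\Tr(\psi\,dM),
\]
so $e^{-Y}$ is a local martingale. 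If it is a true martingale, then
\[
E[e^{-Y_T}\mid\cF_t] = e^{-Y_t} .
\]
By \eqref{eq:Y-tilde} at $t=T$, the integral over $[T,T]$ vanishes and $Y_T = \Tr\bigl(uX_T+\int_0^T fX_s\,ds\bigr)$, which yields \eqref{eq:main-theorem-laplace-transform}.

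The main obstacle is the first step: the bookkeeping of the matrix Fubini exchange, and in particular the placement of $K$ relative to $\psi$. For a general matrix kernel $K$, $\Tr(u K(T-r)\,dM_r)$ must match $\Tr(\psi(r)\,dM_r)$ with $\psi(r) = uK(T-r)+\dots$, so the kernel has to sit on the right of $u$ exactly as in \eqref{eq:riccati_1}. I would verify this first in vectorized form using $\widetilde K$ and $\Tr(A^\cplxtrans B)=\vc(A)^\cplxtrans\vc(B)$. A further point needing care is that the stochastic Fubini theorem requires $\psi\in L^2$ and $K\in\loc^2$ to control $\int\!\int \|\psi(s)K(s-r)\|^2\Tr X_r$. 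I would handle this using Young's inequality together with the moment bound of Lemma~\ref{lem:moment_bound}.
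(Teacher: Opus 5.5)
Your proposal is correct and follows essentially the paper's argument. You expand $E[X_s\mid\cF_t]$ via Lemma~\ref{lm:cond-expec}, exchange integrals by deterministic and stochastic Fubini under the trace, and let the Riccati--Volterra equation and $B^\top$ cancel all terms in $E[X_r\mid\cF_t]$; you then conclude by It\^o with $\tfrac12\,d\langle Y\rangle_t=2\Tr(X_t\psi(t)\alpha\psi(t))\,dt$. The paper instead proves $\widetilde Y_0=Y_0$ and then that $\widetilde Y_t-Y_t$ is constant, whereas you compute at a general $t$ in one pass.

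One technical correction: $f$ and $\psi\alpha\psi$ are only in $L^1$, so the stochastic Fubini step cannot rest on an $L^2$ double-integral bound for $\psi(s)K(s-r)$. Use the $L^1(dr;L^2(ds))$ condition of Veraar's theorem, which holds pathwise because $\int_0^T\bigl(\int_0^{t\wedge r}\|G(r)K(r-s)\sqrt{X_s}\|^2ds\bigr)^{1/2}dr\le\sup_{s\le T}\|\sqrt{X_s}\|\,\|K\|_{L^2(0,T)}\|G\|_{L^1(0,T)}<\infty$ for $G$ any of $f$, $B^\top(\psi)$, $\psi\alpha\psi$. This is exactly what the paper does.
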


\begin{remark}
	For all \(v, x \in \bS_d(\C)\), we have the identity
	\begin{equation*}
		\sum_{i,j,k,l} v_{ij} A_{ijkl}(x) v_{kl} = 4 \Tr(x v \alpha v).
	\end{equation*}
	With this notation, since $\psi$ takes values in $\bS_d(\C)$, Equation \eqref{eq:Y-tilde} can alternatively be expressed in terms of \(A_{ijkl}\) as
	\begin{align*}
		Y_t & = E\bigg[\Tr\Big(uX(T) + \int_{0}^{T} f(s) X_s \, ds \Big) \,\bigg|\, \cF_t \bigg]
		- \frac{1}{2} \int_{t}^{T} \sum_{i,j,k,l} \psi_{ij}(s) A_{ijkl}\big(E[X_s \mid \cF_t]\big) \psi_{kl}(s) \, ds,
	\end{align*}
	which emphasizes the role of the quadratic variation, in line with the representation used in \cite{abijaber_Affine_2019}.
	A similar rewriting can be applied to equation \eqref{eq:Y-t}.
\end{remark}

\begin{proof}[Proof of Theorem \ref{thm:main-thm}]
	Let $(\widetilde{Y}_t)_{t \in [0,T]}$ be given by
	\begin{equation*}
		\widetilde{Y}_t = \Tr \biggl(E\biggl[uX_T + \int_{0}^{T} f(s)X_s ds  \,\bigg|\, \cF_t\biggr] - 2 \int_{t}^{T} E\left[X_s \mid \cF_t\right] \psi(s)\alpha \psi(s) \,ds\biggr).
	\end{equation*}

	To prove the theorem we first show that $\widetilde{Y}_0 = Y_0$.
	\begin{align}\label{eq:temp0}
		\begin{split}
			\widetilde{Y}_0 - Y_0 & = \Tr\biggl(E \left[ u(X_T-X_0) \mid \cF_0 \right] + \int_{0}^{T} f(s)E[X_s-X_0 \mid \cF_0] ds               \\
			                      & \quad - \int_{0}^{T} \psi(s)b(X_0) \, ds - 2\int_{0}^{T} E[X_s - X_0\mid \cF_0] \psi(s)\alpha \psi(s) \,ds\biggr).
		\end{split}
	\end{align}
	By Lemma \ref{lm:cond-expec} we now see that the last term inside the trace has the representation
	\begin{align*}
		\int_{0}^{T} E[X_s-X_0\mid \cF_0] \psi(s)\alpha \psi(s) \,ds & = \int_{0}^{T} \left(\int_{0}^{s} K(s-r)b(E[X_r \mid \cF_0]) dr\right) \psi(s)\alpha \psi(s)\, ds \\
		\intertext{and an application of Fubini's theorem and by using the cyclicity of the trace, we get}
		                                                             & = \int_{0}^{T} b(E[X_r \mid \cF_0]) \int_{r}^{T} \psi(s)\alpha \psi(s)K(s-r) \, ds \, dr.
	\end{align*}
	Therefore, using the Riccati--Volterra equation \eqref{eq:riccati_1} it holds
	\begin{align}\label{eq:temp1}
		\begin{split}
			-2\int_{0}^{T} E[X_s-X_0\mid \cF_0] \psi(s)\alpha \psi(s) \,ds & = -\int_{0}^{T} b(E[X_r \mid \cF_0]) \biggl(\int_{r}^{T} 2\psi(s)\alpha \psi(s)K(s-r) \, ds\biggr) \, dr \\
			                                                               & = \int_{0}^{T} b(E[X_r \mid \cF_0]) \bigg(\psi(r) - uK(T-r)                                              \\
			                                                               & \quad - \int_{r}^{T} \bigl(f(s) + B^\top(\psi(s)) \bigr)K(s-r) \,ds\bigg) \, dr.
		\end{split}
	\end{align}
	Remark now that
	\begin{equation}\label{eq:temp2}
		\int_{0}^{T} \psi(s)b(X_0) \, ds = \int_{0}^{T} \psi(s)b(E[X_s \mid \cF_0])\, ds - \int_{0}^{T} \psi(s)B(E[X_s-X_0 \mid \cF_0])\, ds
	\end{equation}
	and by Lemma \ref{lm:cond-expec}
	\begin{equation}\label{eq:temp3}
		u E \left[(X_T-X_0) \mid \cF_0 \right] = u\int_{0}^{T} K(T-s)b(E[X_s \mid \cF_0]) \, ds.
	\end{equation}
	Substituting \eqref{eq:temp1}, \eqref{eq:temp2} and \eqref{eq:temp3} into \eqref{eq:temp0} and using cyclicity and Lemma \ref{lm:cond-expec},
	\begin{align*}
		\widetilde{Y}_0 - Y_0 & = \Tr\biggl( u \int_{0}^{T} K(T-s)b(E[X_s \mid \cF_0]) \, ds + \int_{0}^{T} f(s)E[X_s-X_0 \mid \cF_0] \, ds         \\
		                      & \quad - \int_{0}^{T} \psi(s)b(E[X_s \mid \cF_0])\, ds + \int_{0}^{T} \psi(s)B(E[X_s-X_0 \mid \cF_0])\, ds           \\
		                      & \quad + \int_{0}^{T} b(E[X_r \mid \cF_0]) \bigg(\psi(r) - uK(T-r)                                                   \\
		                      & \quad - \int_{r}^{T} \bigl(f(s) + B^\top(\psi(s)) \bigr)K(s-r) \,ds\bigg) \, dr \biggr)                             \\
		                      & = \Tr\biggl( \int_{0}^{T} f(s)E[X_s-X_0 \mid \cF_0] + \psi(s)B(E[X_s-X_0 \mid \cF_0]) \, ds                         \\
		                      & \quad - \int_{0}^{T} b(E[X_r \mid \cF_0]) \int_{r}^{T} \bigl(f(s) + B^\top(\psi(s))\bigr)K(s-r) \,ds \, dr \biggr).
	\end{align*}
	It remains to show, under the trace, that
	\begin{align*}
		\int_{0}^{T} f(s)E[X_s-X_0 \mid \cF_0] \, ds       & = \int_{0}^{T} b(E[X_r \mid \cF_0]) \int_{r}^{T} f(s)K(s-r) \,ds \, dr              \\
		\int_{0}^{T} \psi(s)B(E[X_s-X_0 \mid \cF_0]) \, ds & = \int_{0}^{T} b(E[X_r \mid \cF_0]) \int_{r}^{T} B^\top(\psi(s)) K(s-r) \,ds \, dr.
	\end{align*}
	The first equation immediately follows by Lemma \ref{lm:cond-expec}, Fubini's theorem and cyclicity
	\begin{align*}
		\int_{0}^{T} f(s)E[X_s-X_0 \mid \cF_0] \, ds & = \int_{0}^{T} f(s) \left(\int_{0}^{s} K(s-r)b(E[X_r \mid \cF_0])\, dr \right) ds    \\
		                                             & = \int_{0}^{T} b(E[X_r \mid \cF_0]) \left(\int_{r}^{T} f(s) K(s-r) \, ds \right) dr.
	\end{align*}
	And similarly for the second equation, by making use of the linearity of $B$ and the definition of its adjoint operator $B^\top$, we obtain
	\begin{align*}
		\int_{0}^{T} \psi(s)B(E[X_s-X_0 \mid \cF_0]) \, ds & = \int_{0}^{T} \psi(s)B\left(\int_{0}^{s} K(s-r)b(E[X_r \mid \cF_0])\, dr\right) \, ds \\
		                                                   & = \int_{0}^{T} \int_{0}^{s} \psi(s)B\Big(K(s-r)b(E[X_r \mid \cF_0])\Big) \, dr \, ds   \\
		                                                   & = \int_{0}^{T} b(E[X_r \mid \cF_0]) \int_{r}^{T} B^\top (\psi(s))K(s-r) \, ds \, dr.
	\end{align*}
	We have thus shown that $\widetilde{Y}_0 = Y_0$.

	Next, in order to prove that $\widetilde{Y} = Y$, let $t \in [0,T]$ and observe that
	\begin{align*}
		\widetilde{Y}_t & = \Tr \biggl(E\biggl[uX_T + \int_{0}^{T} f(s)X_s ds \,\bigg|\, \cF_t\biggr] - 2 \int_{0}^{T} E\left[X_s \mid \cF_t\right] \psi(s)\alpha \psi(s) \,ds \\
		                & \quad + 2 \int_{0}^{t} E\left[X_s \mid \cF_t\right] \psi(s)\alpha \psi(s) \,ds\biggr)                                                                \\
		                & = \Tr \biggl( E[ uX_T \mid \cF_t ] + \int_{0}^{T} f(s) E[X_s \mid \cF_t ] ds - 2\int_{0}^{T} E[X_s \mid \cF_t ] \psi(s) \alpha \psi(s) \, ds         \\
		                & \quad + 2\int_{0}^{t} X_s \psi(s) \alpha \psi(s) \, ds\biggr).
	\end{align*}
	Let $C \in \C$ be a constant independent of $t$ and note that we allow $C$ to change from line to line. Then
	\begin{align*}
		\widetilde{Y}_t - Y_t & = C + \Tr \biggl( uE[X_T \mid \cF_t ] + \int_{0}^{T} f(s) E[X_s \mid \cF_t ] ds - 2\int_{0}^{T} E[X_s \mid \cF_t ] \psi(s) \alpha \psi(s) \, ds \\
		                      & \quad - \int_{0}^{t} \psi(s) \sqrt{X_s}dW_s Q - \int_{0}^{t} Q^\top dW_s^\top \sqrt{X_s}\psi(s)\biggr).                                         \\
	\end{align*}
	Applying Lemma \ref{lm:cond-expec} and cyclicity we see that
	\begin{align*}
		\widetilde{Y}_t - Y_t & = C + \Tr \biggl(\int_{0}^{T} f(s) E[X_s \mid \cF_t ] \, ds - 2\int_{0}^{T} E[X_s \mid \cF_t ] \psi(s) \alpha \psi(s) \, ds \\
		                      & \quad + u \int_{0}^{T} K(T-s) b(E[X_s \mid \cF_t]) \, ds  + u\int_{0}^{t} K(T-s)\sqrt{X_s}dW_s Q                            \\
		                      & \quad + u\int_{0}^{t} K(T-s)Q^\top dW_s^\top \sqrt{X_s}
		- \int_{0}^{t} \psi(s) \sqrt{X_s}dW_s Q - \int_{0}^{t} Q^\top dW_s^\top \sqrt{X_s}\psi(s)\biggr)                                                    \\
		                      & = C + \Tr \biggl(\int_{0}^{T} f(s) E[X_s \mid \cF_t ] \, ds - 2\int_{0}^{T} E[X_s \mid \cF_t ] \psi(s) \alpha \psi(s) \, ds \\
		                      & \quad  + u \int_{0}^{T} K(T-s) b(E[X_s \mid \cF_t]) \, ds  + \int_{0}^{t} \left(uK(T-s)-\psi(s)\right)\sqrt{X_s}dW_s Q      \\
		                      & \quad + \int_{0}^{t} Q^\top dW_s^\top \sqrt{X_s}\left( uK(T-s)-\psi(s)\right)\biggr).
	\end{align*}
	Furthermore, using $b(\cdot) = b_0 + B(\cdot)$ and rewriting the third term inside the trace with the adjoint operator $B^\top$, we see that
	\begin{align*}
		\widetilde{Y}_t - Y_t & = C + \Tr \biggl(\int_{0}^{T} f(s) E[X_s \mid \cF_t ] \, ds + \int_{0}^{T} E[X_s \mid \cF_t] \Bigl(B^\top\bigl( u K(T-s)\bigr) - 2\psi(s) \alpha \psi(s)\Bigr) ds \\
		                      & \quad + \int_{0}^{t} \Bigl( uK(T-s)-\psi(s)\Bigr)\sqrt{X_s}dW_s Q + \int_{0}^{t} Q^\top dW_s^\top \sqrt{X_s}\Bigl(uK(T-s)-\psi(s)\Bigr)\biggr).
	\end{align*}
	Applying now Lemma \ref{lm:cond-expec} once again, we obtain
	\begin{align}\label{eq:temp-1}
		\begin{split}
			\widetilde{Y}_t & - Y_t = C + \Tr \biggl(\int_{0}^{T} f(s) E[X_s \mid \cF_t ] \, ds                                                                                                 \\
			                & \quad + \int_{0}^{T} \left(\int_{0}^{s} K(s-r)b(E[X_r \mid \cF_t]) dr\right) \Bigl(B^\top\bigl( u K(T-s)\bigr) - 2\psi(s) \alpha \psi(s)\Bigr) ds                 \\
			                & \quad + \int_{0}^{T} \left(\int_{0}^{t \wedge s} K(s-r)\sqrt{X_r}dW_r Q\right) \Bigl(B^\top\bigl( u K(T-s)\bigr) - 2\psi(s) \alpha \psi(s)\Bigr) ds \\
			                & \quad + \int_{0}^{T} \left(\int_{0}^{t \wedge s} K(s-r)Q^\top dW_r^\top \sqrt{X_r}\right) \Bigl(B^\top\bigl( u K(T-s)\bigr) - 2\psi(s) \alpha \psi(s)\Bigr) ds \\
			                & \quad + \int_{0}^{t} \Bigl( uK(T-s)-\psi(s)\Bigr)\sqrt{X_s}dW_s Q                                                                                                 \\
			                & \quad + \int_{0}^{t} Q^\top dW_s^\top \sqrt{X_s}\Bigl( uK(T-s)-\psi(s)\Bigr)\biggr).
		\end{split}
	\end{align}
	Note that by the Riccati--Volterra equation \eqref{eq:riccati_1} it holds for all $s \in [0,t]$
	\begin{equation*}
		uK(T-s)-\psi(s) = - \int_{s}^{T} \bigl(f(r) + B^\top(\psi(r)) - 2\psi(r)\alpha \psi(r)\bigr)K(r-s) \, dr.
	\end{equation*}

	The stochastic Fubini theorem (see \cite[Theorem 2.2]{veraar_stochastic_2012}) implies that, again understood under the trace operator,
	\begin{align}\label{eq:temp-2}
		\begin{split}
			 & \int_{0}^{t} \Bigl( uK(T-s)-\psi(s)\Bigr)\sqrt{X_s}dW_s Q                                                                                         \\
			 & \quad = - \int_{0}^{t} \biggl( \int_{s}^{T} \bigl(f(r) + B^\top(\psi(r)) - 2\psi(r)\alpha \psi(r)\bigr)K(r-s) \, dr \biggr) \sqrt{X_s}dW_s Q      \\
			 & \quad = - \int_{0}^{T} \biggl(\int_{0}^{t \wedge r} K(r-s)\sqrt{X_s}dW_s Q \biggr) \Bigl(f(r) + B^\top(\psi(r)) - 2\psi(r)\alpha \psi(r)\Bigr) dr
		\end{split}
	\end{align}
	and similarly
	\begin{align}\label{eq:temp-3}
		\begin{split}
			 & \int_{0}^{t} Q^\top dW_s^\top \sqrt{X_s}\Bigl( uK(T-s)-\psi(s)\Bigr)                                                                                          \\
			 & \quad = - \int_{0}^{t} \biggl( \int_{s}^{T} \bigl(f(r) + B^\top(\psi(r)) - 2\psi(r)\alpha \psi(r)\bigr)K(r-s) \, dr\biggr) Q^\top dW_s^\top \sqrt{X_s}        \\
			 & \quad = - \int_{0}^{T} \biggl(\int_{0}^{t \wedge r} K(r-s)Q^\top dW_s^\top \sqrt{X_s} \biggr) \Bigl(f(r) + B^\top(\psi(r)) - 2\psi(r)\alpha \psi(r)\Bigr) dr.
		\end{split}
	\end{align}

	To justify the application of the stochastic Fubini theorem, we will work with the vectorized process of \eqref{eq:temp-2}. To this end, note first, that as before
	\begin{align*}
		\int_{0}^{t} & \biggl( \int_{s}^{T} \bigl(f(r) + B^\top(\psi(r)) - 2\psi(r)\alpha \psi(r)\bigr)K(r-s) \, dr\biggr) \sqrt{X_s}dW_s Q                                                   \\
		             & \quad = \int_{0}^{t} \biggl( \int_{s}^{T} \bigl(f(r) + B^\top(\psi(r)) - 2\psi(r)\alpha \psi(r)\bigr)K(r-s) \, dr\biggr) \sum_{k,l = 1}^{d} \sigma^{kl}(X_s) dW_{s,kl}
	\end{align*}
	with $\sigma^{kl}(x) = \sqrt{x}e_{kl}Q$ for $x \in \bS_d^+$, where $e_{kl}$ is the matrix with a $1$ at position $(k,l)$ and zeros elsewhere. Now we see that
	\begin{align*}
		\Tr & \biggl( \int_{0}^{t} \biggl( \int_{s}^{T} \bigl(f(r) + B^\top(\psi(r)) - 2\psi(r)\alpha \psi(r)\bigr)K(r-s) \, dr \biggr) \sum_{k,l = 1}^{d} \sigma^{kl}(X_s) dW_{s,kl} \biggr)                                                  \\
		    & \quad = \int_{0}^{t} \vc\biggl(\Bigl(\int_{s}^{T} \bigl(f(r) + B^\top(\psi(r)) - 2\psi(r)\alpha \psi(r)\bigr)K(r-s) \, dr \Bigr)^\cplxtrans \biggr)^\cplxtrans \sum_{k,l = 1}^{d} \vc(\sigma^{kl}(X_s)) dW_{s,kl}             \\
		    & \quad = \int_{0}^{t} \int_{s}^{T} \Bigl( \bigl(K(r-s)^\top \otimes \Id_d\bigr) \vc\bigl(f(r) + B^\top(\psi(r)) - 2\psi(r)\alpha \psi(r)\bigr)^\cplxtrans \Bigr)^\cplxtrans dr \sum_{k,l = 1}^{d} \vc(\sigma^{kl}(X_s)) dW_{s,kl} \\
		    & \quad = \sum_{k,l = 1}^{d} \int_{0}^{t} \int_{s}^{T} \vc\Bigl(\bigl(f(r) + B^\top \bigl(\psi(r)\bigr) - 2\psi(r)\alpha \psi(r)\bigr)^\cplxtrans \Bigr)^\cplxtrans \widetilde{K}(r-s) \vc(\sigma^{kl}(X_s))\, dr\,  dW_{s,kl},
	\end{align*}
	with $\widetilde{K} = \bigl(\Id_d \otimes K\bigr)$. This now matches the expression of the justification of the stochastic Fubini theorem in \cite{ackermann_Inhomogeneous_2022}, and
	we can simply reproduce the argument. Let for every $k,l \in \{1, \dotsc, d\}$, $\varphi_{kl} : [0,T] \times [0,t] \times \bS_d^+ \to \C$, $\varphi_{kl}(r,s,x) = \vc\Bigl(\bigl(f(r) + B^\top(\psi(r)) - 2\psi(r)\alpha \psi(r)\bigr)^\cplxtrans \Bigr)^\cplxtrans \widetilde{K}(r-s) \vc(\sigma^{kl}(x)) \Ind_{[0,r]}(s)$.
	It now holds for every $k,l \in \{1, \dotsc, d\}$
	\begin{align}\label{eq:temp-4}
		\begin{split}
			\int_{0}^{T} \biggl( \int_{0}^{t} \| \varphi_{kl}(r,s,X_s) \|^2 ds \biggr)^{\frac{1}{2}} dr &\leq \int_{0}^{T} \| f(r) + B^\top \bigl(\psi(r)\bigr) - 2\psi(r)\alpha \psi(r) \| \biggl(\int_{0}^{t \wedge r} \|\widetilde{K}(r-s)\|^2 \|\sigma^{kl}(X_s)\|^2 ds\biggr)^{\frac{1}{2}}dr \\
			                                                                                   & \leq \biggl(\sup_{s \in [0,T]} \|\sigma^{kl}(X_s)\| \biggr) \biggl(\int_{0}^{T} \|\widetilde{K}(s)\|^2 ds\biggr)^\frac{1}{2}                                                           \\
			                                                                                   & \quad \times \int_{0}^{T} \Bigl( \|f(r)\| + \|B^\top(\psi(r))\| + \|2\psi(r)\alpha \psi(r)\|\Bigr) dr.
		\end{split}
	\end{align}
	Since $\sigma^{kl}$ is at most of linear growth and $X$ is continuous we obtain that $\sup_{s \in [0,T]} \|\sigma^{kl}(X_s)\| < \infty$. Moreover, by construction, there exists $c \in (0, \infty)$ such that for all $r \in [0,T]$, $\|B^\top(\psi(r))\| \leq c \|\psi(r)\|$ and $\|\psi(r)\alpha \psi(r)\| \leq c \|\psi(r)\|^2$.
	Now, we conclude from $K \in \loc^2(\R_+, \R^{d \times d})$, $f \in L^1([0,T], \bS_d(\C))$ and $\psi \in L^2([0,T], \C^{d \times d})$ that \eqref{eq:temp-4} is finite. We can thus apply \cite[Theorem 2.2]{veraar_stochastic_2012} to each $\R$-valued integral
	$\int_{0}^{t} \int_{0}^{T} \re \varphi_{kl}(r,s,X_s) dr dW_{s,kl}$, $\int_{0}^{t} \int_{0}^{T} \im \varphi_{kl}(r,s,X_s) dr dW_{s,kl}$.
	The same reasoning applies for \eqref{eq:temp-3} with $\sigma^{kl}(x) = Q^\top e_{lk} \sqrt{x}$.

	Combining \eqref{eq:temp-1}, \eqref{eq:temp-2} and \eqref{eq:temp-3} implies that
	\begin{align*}
		\widetilde{Y}_t - Y_t & = C + \Tr \biggl(\int_{0}^{T} f(s) E[X_s \mid \cF_t ] \, ds                                                                                                             \\
		                      & \quad + \int_{0}^{T} \biggl(\int_{0}^{s} K(s-r)b(E[X_r \mid \cF_t]) dr\biggr) \Bigl(B^\top\bigl(u K(T-s)\bigr) - 2\psi(s) \alpha \psi(s)\Bigr) ds                       \\
		                      & \quad + \int_{0}^{T} \biggl(\int_{0}^{t \wedge s} K(s-r)\sqrt{X_r}dW_r Q\biggr) \Bigl( B^\top\bigl(u K(T-s)\bigr) - f(s) - B^\top(\psi(s))\Bigr) ds                     \\
		                      & \quad + \int_{0}^{T} \biggl(\int_{0}^{t \wedge s} K(s-r)Q^\top dW_r^\top \sqrt{X_r} \biggr) \Bigl(B^\top\bigl( u K(T-s)\bigr) - f(s) - B^\top(\psi(s))\Bigr) ds \biggr) \\
		                      & = C + \Tr \biggl(\int_{0}^{T} f(s) E[X_s \mid \cF_t ] \, ds                                                                                                             \\
		                      & \quad + \int_{0}^{T} \biggl(\int_{0}^{s} K(s-r)b(E[X_r \mid \cF_t]) dr\biggr) \Bigl(B^\top\bigl(u K(T-s)\bigr) - 2\psi(s) \alpha \psi(s)\Bigr) ds                       \\
		                      & \quad + \int_{0}^{T} \biggl(\int_{0}^{t \wedge s} K(s-r)\sqrt{X_r}dW_r Q + \int_{0}^{t \wedge s} K(s-r)Q^\top dW_r^\top \sqrt{X_r}\biggr)                               \\
		                      & \quad \times \Bigl(B^\top\bigl( u K(T-s) - \psi(s)\bigr) - f(s)\Bigr) ds \biggr)
	\end{align*}
	Observe that by Lemma \ref{lm:cond-expec} it holds for all $s \in [0,T]$
	\begin{align*}
		\int_{0}^{t \wedge s} K(s-r) & \sqrt{X_r}dW_r Q + \int_{0}^{t \wedge s} K(s-r)Q^\top (dW_r)^\top \sqrt{X_r} \\
		                             & = E[X_s \mid \cF_t] - X_0 - \int_{0}^{s} K(s-r) b(E[X_r \mid \cF_t]) dr
	\end{align*}
	which implies
	\begin{align*}
		\widetilde{Y}_t & - Y_t = C + \Tr \biggl(\int_{0}^{T} f(s) E[X_s \mid \cF_t ] \, ds                                                                                     \\
		                & \quad + \int_{0}^{T} \biggl(\int_{0}^{s} K(s-r)b(E[X_r \mid \cF_t]) dr\biggr) \Bigl(B^\top\bigl(u K(T-s)\bigr) - 2\psi(s) \alpha \psi(s)\Bigr) ds     \\
		                & \quad + \int_{0}^{T} \biggl(E[X_s \mid \cF_t] - X_0 - \int_{0}^{s} K(s-r) b(E[X_r \mid \cF_t]) dr\biggr)                                              \\
		                & \quad \times \Bigl(B^\top\bigl( u K(T-s)- \psi(s)\bigr) - f(s)\Bigr) ds \biggr)                                                                       \\
		                & = C + \Tr \biggl(\int_{0}^{T} E[X_s \mid \cF_t ] f(s) \, ds                                                                                           \\
		                & \quad + \int_{0}^{T} \biggl(\int_{0}^{s} K(s-r)b(E[X_r \mid \cF_t]) dr\biggr) \Bigl( f(s) + B^\top(\psi(s))-2\psi(s) \alpha \psi(s)\Bigr) ds               \\
		                & \quad + \int_{0}^{T} E[X_s \mid \cF_t]\Bigl( B^\top\bigl(u K(T-s)- \psi(s)\bigr) - f(s)\Bigr) ds \biggr)                                              \\
		                & = C + \Tr \biggl( \int_{0}^{T} \biggl(\int_{0}^{s} K(s-r)b(E[X_r \mid \cF_t]) dr\biggr) \Bigl(f(s) + B^\top(\psi(s))-2\psi(s) \alpha \psi(s)\Bigr) ds \\
		                & \quad + \int_{0}^{T} E[X_s \mid \cF_t]B^\top\bigl( u K(T-s)- \psi(s)\bigr) ds \biggr).
	\end{align*}
	We therefore conclude that
	\begin{align}
		\begin{split}\label{eq:temp**}
			\widetilde{Y}_t - Y_t & = C + \Tr \biggl( \int_{0}^{T} \biggl(\int_{0}^{s} K(s-r)B(E[X_r \mid \cF_t]) dr\biggr) \Bigl(f(s) + B^\top(\psi(s))-2\psi(s) \alpha \psi(s)\Bigr) ds \\
			                      & \quad + \int_{0}^{T} E[X_s \mid \cF_t] B^\top\bigl(u K(T-s)- \psi(s)\bigr) ds \biggr).
		\end{split}
	\end{align}
	Finally, under the trace operator, using Fubini's theorem and subsequently the Riccati--Volterra equation \eqref{eq:riccati_1}, it holds
	\begin{align}
		\begin{split}\label{eq:temp-5}
			 & \int_{0}^{T} \biggl(\int_{0}^{s} K(s-r)B(E[X_r \mid \cF_t])dr\biggr) \Bigl( f(s) + B^\top(\psi(s))-2\psi(s) \alpha \psi(s)\Bigr) ds       \\
			 & \quad = \int_{0}^{T} B(E[X_r \mid \cF_t])\biggl(\int_{r}^{T} \Bigl(f(s) + B^\top(\psi(s))-2\psi(s) \alpha \psi(s)\Bigr)K(s-r) ds\biggr)dr \\
			 & \quad = \int_{0}^{T} B\bigl(E[X_r \mid \cF_t]\bigr)\Bigl(\psi(r) - u K(T-r)\Bigr)dr                                                       \\
			 & \quad = \int_{0}^{T} E[X_r \mid \cF_t]B^\top \bigl(\psi(r) - u K(T-r)\bigr)dr.
		\end{split}
	\end{align}
	It now follows from \eqref{eq:temp**} and \eqref{eq:temp-5} that $\widetilde{Y}_t - Y_t = C$. The fact that $\widetilde{Y}_0 = Y_0$ implies that $C = 0$ and hence
	$\widetilde{Y} = Y$. For the remaining claims, note that by \eqref{eq:Y-t}, $-Y + \frac{1}{2}\langle Y \rangle$ is a local martingale and hence $\exp(-Y)$ is a local martingale.
	Since $-Y_T = -\Tr \bigl(uX_T + \int_{0}^{T} f(s)X_s \, ds\bigr)$ (cf. \eqref{eq:Y-tilde}), we have
	\begin{equation*}
		E\left[\exp\biggl(-\Tr \Bigl( uX_T + \int_{0}^{T} f(s)X_s \, ds \Bigr) \biggr) \,\bigg|\, \cF_t \right] = \exp(-Y_t)
	\end{equation*}
	for all $t \in [0,T]$ if $\exp(-Y)$ is a true martingale.

\end{proof}

In the special case where $K \equiv \Id_d$, the affine Volterra process $X$ of \eqref{eq:main-SDE} reduces to an affine process on $\bS_d^+$ as studied in \cite{cuchiero_Affine_2011}.
Note also in this case that \eqref{eq:riccati_1} reduces to the classical Riccati equation for affine processes on $\bS_d^+$.
It is known (see e.g. \cite[Theorem 2.4]{cuchiero_Affine_2011}) that the conditional Fourier--Laplace transform of such an affine process admits an exponential-affine representation
\begin{equation*}
	E\left[\exp\left( -\Tr(uX_T) \right) \mid \cF_t \right] = \exp\Bigl(-\phi(T-t)-\Tr(\chi(T-t)X_t)\Bigr), \quad t \in [0,T],
\end{equation*}
for $u \in \bS_d^+$, where
$\phi$ and $\chi$ are given in terms of the solution to the corresponding Riccati equation.
Such an exponential structure holds also true in the Volterra case at time zero, as expressed in the following remark.

\begin{remark}\label{remark:time-zero-classical}
	Under the notation of Theorem \ref{thm:main-thm} it holds for $\phi : [0,T] \to \C$ and $\chi :[0,T] \to \bS_d(\C)$ defined by
	\begin{align}\label{eq:riccati-at-time-zero}
		\begin{split}
			\phi(t) & = \int_{T-t}^{T} \Tr(\psi(s)b_0) \, ds, \quad t \in [0,T],                                               \\
			\chi(t) & = u + \int_{T-t}^{T} f(s) + B^\top \bigl(\psi(s)\bigr) - 2\psi(s)\alpha \psi(s)\, ds, \quad t \in [0,T],
		\end{split}
	\end{align}
	that
	\begin{equation*}
		Y_0 = \phi(T) + \Tr(\chi(T)X_0).
	\end{equation*}
	It follows that if $e^{-Y}$ is a true martingale, then
	\begin{equation}\label{eq:laplace-transform-at-time-zero}
		E\left[\exp\biggl(-\Tr \Bigl( uX_T + \int_{0}^{T} f(s)X_s \, ds \Bigr) \biggr)\right] = \exp\Bigl(-\phi(T) - \Tr(\chi(T)X_0)\Bigr).
	\end{equation}
\end{remark}

The following proposition extends the transform formula of Theorem~\ref{thm:main-thm} by allowing for an additional Brownian functional driven by the same Brownian motion as the affine Volterra process. More precisely, we consider a functional of the form
$$ N_t = \int_{0}^{t} \Tr\bigl(H(s)^\top \sqrt{X_s} \, dW_s\bigr),$$
for a deterministic matrix-valued function $H$. 
Such Brownian functionals and the resulting cross-variation terms are familiar from affine stochastic volatility models, in particular from the Volterra--Heston setting and matrix-valued affine covariance models.
The relevant point is that the additional quadratic-variation terms generated by $N$ preserve the affine structure. Indeed, both its quadratic variation and its cross-variation with the martingale part of $X$ are linear in the state. Consequently, the additional finite-variation terms arising from It\^o's formula are of the form $\Tr(X_t A(t))dt$ and can therefore be absorbed into the Riccati--Volterra equation. The exponential-affine structure of the transform is thus preserved.

This extension will be particularly useful in the applications considered in Section~\ref{section:Commodity-models}, where the quantities of interest naturally involve Brownian functionals that are correlated with the Brownian motion driving the Volterra covariance process.

\begin{proposition}\label{prop:additional-brownian-functional}
	Let $T < \infty$ and let $X = (X_t)_{t \in [0,T]}$ be an affine Volterra
	process on $\bS_d^+$, driven by
	the $d \times d$ Brownian motion $W$. Let $H \in L^2([0,T],\R^{d \times d})$ be deterministic, let $f \in L^1([0,T],\bS_d(\C))$, $z \in \C$, $u \in \bS_d(\C)$ and define the real-valued continuous martingale
	\begin{equation*}
		N_t := \int_{0}^{t} \Tr\bigl( H(s)^\top\sqrt{X_s}\,dW_s \bigr), \quad t \in [0,T].
	\end{equation*}
	Assume that $\psi\in L^2([0,T],\bS_d(\C))$ solves
	\begin{equation}\label{eq:riccati-additional-brownian}
		\psi(t) = uK(T-t) + \int_{t}^{T} \Bigl( f(s) + B^\top(\psi(s)) + z\cC_{H(s)}(\psi(s)) - \frac{z^2}{2}H(s)H(s)^\top - 2\psi(s)\alpha\psi(s) \Bigr) K(s-t)\,ds,
	\end{equation}
	where $\cC_H(\psi) := HQ\psi+\psi Q^\top H^\top$.
	Define
	\begin{align}\label{eq:Y-additional-brownian}
		\begin{split}
			Y_t^{z,H} = Y_0^{z,H} &+ \Tr \bigg( \int_{0}^{t} \psi(s)\sqrt{X_s}\,dW_sQ + \int_{0}^{t} Q^\top dW_s^\top\sqrt{X_s}\psi(s)\\
			& \quad + \int_{0}^{t} X_s
		\left[ 2\psi(s)\alpha\psi(s) +\frac{z^2}{2}H(s)H(s)^\top -z \cC_{H(s)}(\psi(s))
				\right]ds \bigg)
		\end{split}
	\end{align}
	with
	\begin{align}\label{eq:Y0-additional-brownian}
		\begin{split}
			Y_0^{z,H} &= \Tr\bigg( uX_0 + \int_{0}^{T} \Big[ f(s)X_0 + \psi(s)b(X_0) + z X_0\cC_{H(s)}(\psi(s)) \\
			& \quad -\frac{z^2}{2}X_0H(s)H(s)^\top - 2X_0\psi(s)\alpha\psi(s) \Big] ds \bigg).
		\end{split}
	\end{align}
	The process $\{\exp(zN_t - Y_t^{z,H}) : 0 \leq t \leq T\}$ is a local martingale and, if it is a true martingale, one has the exponential-affine transform formula
	\begin{equation}\label{eq:transform-additional-brownian}
		E\left[\exp\biggl( zN_T - \Tr \Bigl(uX_T + \int_{0}^{T} f(s)X_s ds \Bigr) \biggr) \,\bigg|\, \cF_t\right] = \exp(zN_t - Y_t^{z,H}).
	\end{equation}
\end{proposition}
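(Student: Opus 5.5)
The plan is to reduce the statement to Theorem~\ref{thm:main-thm}. We apply that theorem with the modified input $\tilde f(s) := f(s) + z\cC_{H(s)}(\psi(s)) - \frac{z^2}{2}H(s)H(s)^\top$, which lies in $L^1([0,T],\bS_d(\C))$ because $\psi\in L^2$ and $H\in L^2$. We then check that the extra terms produced by $N$ exactly compensate. Concretely, \eqref{eq:riccati-additional-brownian} is precisely \eqref{eq:riccati_1} with $f$ replaced by $\tilde f$; symmetry of $\cC_H(\psi)$ and $HH^\top$ is immediate since $\psi$ is symmetric. So the $\psi$ given here solves the Riccati--Volterra equation of Theorem~\ref{thm:main-thm} for $(u,\tilde f)$.

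Let $Y$ be the process of \eqref{eq:Y-t} built from $(u,\tilde f)$. Comparing \eqref{eq:Y0-additional-brownian} with $Y_0$ in \eqref{eq:Y-t}, we get $Y^{z,H}_0=Y_0$. Comparing the $dt$-terms, $Y^{z,H}_t = Y_t - \Tr\int_0^t X_s\bigl(z\cC_{H(s)}(\psi(s)) - \frac{z^2}{2}H(s)H(s)^\top\bigr)ds$. By \eqref{eq:Y-tilde}, the value at $T$ is $Y_T = \Tr(uX_T+\int_0^T \tilde f(s)X_s\,ds)$, since the $\psi\alpha\psi$ integral over $[t,T]$ vanishes at $t=T$. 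Hence
\begin{equation*}
Y^{z,H}_T = \Tr\Bigl(uX_T + \int_0^T f(s)X_s\,ds\Bigr),
\end{equation*}
so $zN_T - Y^{z,H}_T$ is exactly the exponent on the left-hand side of \eqref{eq:transform-additional-brownian}. Note that the martingale part of $Y^{z,H}$ equals that of $Y$; we denote it by $M^Y_t=\Tr\int_0^t(\psi\sqrt{X}dWQ+Q^\top dW^\top\sqrt{X}\psi)$.

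It remains to show that $\exp(zN-Y^{z,H})$ is a local martingale. For this we apply It\^o's formula to $V:=zN-Y^{z,H}$. The drift of $V$ is
\begin{equation*}
-\Tr\bigl(X_s[2\psi\alpha\psi+\tfrac{z^2}{2}HH^\top - z\cC_H(\psi)]\bigr),
\end{equation*}
so we must verify $\frac12 d\langle V\rangle_s$ equals its negative. Three quadratic-variation computations are needed, using $d\langle W_{ij},W_{kl}\rangle=\delta_{ik}\delta_{jl}ds$ and cyclicity. First, $d\langle N\rangle=\Tr(H^\top X H)ds=\Tr(XHH^\top)ds$. Second, $d\langle M^Y\rangle = 4\Tr(X\psi\alpha\psi)ds$, which is the identity $\sum v A v=4\Tr(xv\alpha v)$ already used for Theorem~\ref{thm:main-thm}. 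Third, $dM^Y=2\Tr(Q\psi\sqrt X dW)$, so $d\langle N,M^Y\rangle = 2\Tr(\sqrt X H\, Q\psi\sqrt X)ds = \Tr(X\cC_H(\psi))ds$ after symmetrizing. Then
\begin{equation*}
\tfrac12 d\langle V\rangle = \Tr\bigl(X[\tfrac{z^2}{2}HH^\top + 2\psi\alpha\psi - z\cC_H(\psi)]\bigr)ds,
\end{equation*}
which cancels the drift. Hence $\exp(V)$ is a stochastic exponential of a local martingale.

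If it is a true martingale, then $E[\exp(V_T)\mid\cF_t]=\exp(V_t)$, and by the identity for $V_T$ above this gives \eqref{eq:transform-additional-brownian}. The main obstacle I expect is the cross-variation bookkeeping: getting the factor $2$ and the transposes in $d\langle N,M^Y\rangle$ right so that it matches $\cC_H(\psi)=HQ\psi+\psi Q^\top H^\top$ under the trace, with complex $\psi$ handled bilinearly rather than sesquilinearly. I would do this entrywise in the basis $e_{kl}$, as in the Fubini step of Theorem~\ref{thm:main-thm}. Apart from that, the argument only uses the already-proved identity \eqref{eq:Y-tilde} at $t=T$.
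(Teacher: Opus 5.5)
Your proposal is correct and essentially follows the paper's argument. The same computation of $\langle N\rangle$, $\langle N,M^X\rangle$ and the quadratic variation of the $\psi$-martingale shows that the drift of $zN-Y^{z,H}$ equals $-\tfrac12$ times its quadratic variation, and $Y^{z,H}_T=\Tr\bigl(uX_T+\int_0^T f(s)X_s\,ds\bigr)$ follows by absorbing $z\cC_{H}(\psi)-\frac{z^2}{2}HH^\top$ into the forcing term. The only difference is packaging: you apply \eqref{eq:Y-tilde} of Theorem~\ref{thm:main-thm} at $t=T$ with $\tilde f$ as a black box, which is legitimate because $\tilde f\in L^1([0,T],\bS_d(\C))$ is a fixed deterministic function once $\psi$ is given, while the paper reruns the Fubini identities with the rewritten equation $\psi=uK(T-\cdot)+\int(f+B^\top(\psi)-A)K\,ds$, which is the same computation.
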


\begin{proof}
	The argument follows the proof of Theorem~\ref{thm:main-thm}. We only indicate the additional
	terms generated by the process $N$.

	Note first that, since $X$ has continuous paths,
	\begin{equation*}
		\int_{0}^{T} \|\sqrt{X_s}H(s)\|^2 \, ds = \int_{0}^{T} \Tr \bigl(X_s H(s)H(s)^\top \bigr) ds \leq \sup_{s \in [0,T]} \|X_s\| \int_{0}^{T} \|H(s)\|^2 \, ds < \infty
	\end{equation*}
	almost surely. Moreover, Lemma~\ref{lem:moment_bound} gives
	\begin{equation*}
		E\biggl[\int_{0}^{T} \Tr \bigl(X_s H(s)H(s)^\top\bigr) \, ds\biggr] \leq \sup_{s \in [0,T]} E[\|X_s\|] \int_{0}^{T} \|H(s)\|^2 \, ds < \infty.
	\end{equation*}
	Thus, $N$ is well-defined and is a square-integrable continuous martingale.
	Let $M^X$ denote the martingale part of $X$, that is,
	\begin{equation*}
		dM_t^X = \sqrt{X_t}\,dW_tQ + Q^\top dW_t^\top\sqrt{X_t}.
	\end{equation*}
	The quadratic variation of $N$ is
	\begin{equation}\label{eq:qv-N}
		d\langle N\rangle_t = \Tr\bigl(H(t)^\top X_t H(t) \bigr) dt = \Tr\bigl(X_t H(t)H(t)^\top \bigr) dt,
	\end{equation}
	while its matrix-valued cross-variation with $M^X$ is
	\begin{equation*}
		d\langle N, M^X \rangle_t = \bigl( X_t H(t)Q + Q^\top H(t)^\top X_t \bigr) dt.
	\end{equation*}
	Consequently,
	\begin{equation}\label{eq:cross-N-Y}
		d\biggl\langle N, \Tr\biggl(\int_{0}^{\cdot} \psi(s) dM_s^X\biggr)\biggr\rangle_t = \Tr \bigl(\psi(t) d\langle N, M^X\rangle_t \bigr) = \Tr \bigl( X_t \cC_{H(t)}(\psi(t)) \bigr) dt.
	\end{equation}
	Set
	\begin{equation*}
		L_t := zN_t - Y_t^{z,H}.
	\end{equation*}
	The continuous local martingale part of $L$ is given by
	\begin{equation*}
		dL_t^m = zdN_t - \Tr \bigl( \psi(t)\sqrt{X_t}dW_t Q + Q^\top dW_t^\top \sqrt{X_t} \psi(t) \bigr).
	\end{equation*}
	Using \eqref{eq:qv-N}, \eqref{eq:cross-N-Y}, and the quadratic
	variation calculation from Theorem~\ref{thm:main-thm}, we obtain
	\begin{equation*}
		\frac{1}{2} d \langle L^m \rangle_t = \Tr \biggl(X_t \biggl[2 \psi(t) \alpha \psi(t) + \frac{z^2}{2}H(t)H(t)^\top - z\cC_{H(t)}(\psi(t))\biggr]\biggr) dt.
	\end{equation*}
	Defining now
	\begin{equation*}
		A(s) := 2 \psi(s)\alpha\psi(s) + \frac{z^2}{2}H(s)H(s)^\top - z\cC_{H(s)}(\psi(s)),
	\end{equation*}
	we have by definition of $Y^{z,H}$ 
	\begin{equation*}
		dY_t^{z,H} = \Tr \bigl(\psi(t) dM_t^X\bigr) + \Tr \bigl(X_t A(t)\bigr) dt.
	\end{equation*}
	Consequently,
	\begin{equation*}
		dL_t = dL_t^m - \frac{1}{2} d\langle L^m\rangle_t.
	\end{equation*}
	It\^o's formula therefore gives
	\begin{equation*}
		d\exp(L_t) = \exp(L_t)\,dL_t^m,
	\end{equation*}
	so that $\exp(L)$ is a local martingale. It remains to identify the terminal value of $Y^{z,H}$.
	Combining \eqref{eq:Y-additional-brownian} and \eqref{eq:Y0-additional-brownian} yields
	\begin{align*}
		Y_T^{z,H} &= \Tr(uX_0) + \int_{0}^{T} \Tr(f(s)X_0) \, ds + \int_{0}^{T} \Tr(\psi(s)b(X_0))\,ds\\
		& \quad + \Tr\biggl( \int_{0}^{T} \psi(s)\,dM_s^X\biggr) + \int_{0}^{T} \Tr\bigl(A(s)(X_s-X_0)\bigr)ds.
	\end{align*}
On the other hand, \eqref{eq:riccati-additional-brownian} can be written as
\begin{equation*}
	\psi(t) = uK(T-t) + \int_{t}^{T} \bigl(f(s)+B^\top(\psi(s))-A(s)\bigr)K(s-t)\, ds.
\end{equation*}
Applying the Fubini identities as in the proof of Theorem~\ref{thm:main-thm} to the Volterra equation for $X-X_0$, and using $b(X_s) = b(X_0) + B(X_s-X_0)$, therefore gives
\begin{align*}
	&\Tr\bigl(u(X_T-X_0)\bigr) + \int_{0}^{T} \Tr\bigl(f(s)(X_s-X_0)\bigr)ds \\
	&\quad = \int_{0}^{T} \Tr\bigl(\psi(s)b(X_0)\bigr)ds  + \Tr\biggl(\int_{0}^{T} \psi(s)\,dM_s^X\biggr) + \int_{0}^{T} \Tr\bigl(A(s)(X_s-X_0)\bigr) ds.
\end{align*}
Since $H,\psi \in L^2([0,T])$, the additional terms
$HH^\top$, $\psi\alpha\psi$, and $\cC_H(\psi)$ belong to $L^1([0,T])$, so the same Fubini argument is
applicable. Substituting this identity into the derived expression for $Y_T^{z,H}$, yields
$$Y_T^{z,H} = \Tr\biggl( uX_T + \int_{0}^{T} f(s)X_s \,ds \biggr).$$
Finally, if $e^L$ is a true martingale, then
\begin{equation*}
    E[e^{L_T} \mid \cF_t] = e^{L_t}.
\end{equation*}
Using the preceding expression for $Y_T^{z,H}$ yields \eqref{eq:transform-additional-brownian}.
\end{proof}

The next lemma provides a reformulation of the Riccati--Volterra equation \eqref{eq:riccati_1}, if $\widetilde{K} = \Id_d \otimes K$ admits a resolvent of the first kind.

\begin{lemma}\label{remark:psi-convolution}
	Suppose that $ \widetilde{K} = \Id_d \otimes K$ is measurable and $\widetilde{K} \in \loc^2(\R_+, \R^{d^2 \times d^2})$. Fix $u \in \bS_d(\C), f \in L^1([0,T], \bS_d(\C))$.
	If $\widetilde{K}$ admits a resolvent of the first kind $L$, then the Riccati--Volterra equation \eqref{eq:riccati_1} is equivalent to
	\begin{equation}\label{eq:psi-conv-wt-L}
		\int_{[0,T-t]} \vc\left( \psi(t+s)^\cplxtrans \right)^\cplxtrans L(ds)  = \vc\biggl(\Big( u + \int_{t}^{T} f(s) + B^\top \bigl(\psi(s)\bigr) - 2 \psi(s)\alpha \psi(s) \, ds\Big)^\cplxtrans \biggr)^\cplxtrans, \quad t \in [0,T].
	\end{equation}
	Using the notation for convolutions, this can be written as
	\begin{equation}\label{eq:convol-psi}
		\bigl(\vc\bigl(\psi(T - \cdot)^\cplxtrans\bigr)^\cplxtrans \ast L \bigr)(t) = \vc\biggl(\Big( u + \int_{T-t}^{T} f(s) + B^\top\bigl(\psi(s)\bigr) - 2\psi(s)\alpha \psi(s) \, ds\Big)^\cplxtrans \biggr)^\cplxtrans , \quad t \in [0,T].
	\end{equation}
\end{lemma}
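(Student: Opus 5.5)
The plan is to mirror the scalar argument of \cite[Remark 2.7 / Lemma 2.5]{abijaber_Affine_2019}: vectorize the Riccati--Volterra equation, reverse time, and then invert the kernel using the resolvent $L$. Throughout, write $g(s) := f(s) + B^\top(\psi(s)) - 2\psi(s)\alpha\psi(s)$. Since $\psi \in L^2$ and $f \in L^1$, we have $g \in L^1([0,T],\bS_d(\C))$.

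\textbf{Step 1 (vectorize and reverse time).} For $F \in \bS_d(\C)$ and $K \in \R^{d\times d}$, the identity $\vc(ABC) = (C^\top\otimes A)\vc(B)$ gives
\begin{equation*}
	\vc\bigl((FK)^\cplxtrans\bigr)^\cplxtrans = \vc(F^\cplxtrans)^\cplxtrans (K \otimes \Id_d).
\end{equation*}
This is exactly the manipulation already used in the stochastic Fubini step of the proof of Theorem~\ref{thm:main-thm}. The block structures $K\otimes\Id_d$ and $\Id_d\otimes K$ are conjugate under the commutation matrix. I will either absorb this permutation into the row-vector convention or note that for diagonal $K=k\Id_d$ both coincide with $k\Id_{d^2}$; this bookkeeping is where I expect most care to be needed.

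Set $\Psi(t) := \vc(\psi(T-t)^\cplxtrans)^\cplxtrans$, $G(t) := \vc(g(T-t)^\cplxtrans)^\cplxtrans$ and $U := \vc(u^\cplxtrans)^\cplxtrans$. The substitution $t\mapsto T-t$ turns \eqref{eq:riccati_1} into
\begin{equation*}
	\Psi = U\widetilde{K} + G \ast \widetilde{K}
\end{equation*}
on $[0,T]$, which is an identity of row-vector-valued $L^1$ functions.

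\textbf{Step 2 (forward direction).} Convolve both sides on the right with $L$. Since $\widetilde{K}\ast L = \Id_{d^2}$ (as a function identically equal to the identity), associativity of convolutions of $L^1$ functions with finite measures \cite[Theorem 3.6.1]{gripenberg_Volterra_1990} gives
\begin{equation*}
	(\Psi \ast L)(t) = U + \int_0^t G(s)\,ds
\end{equation*}
for a.e. $t$. Undoing the time reversal, $\int_0^t G = \int_{T-t}^T \vc(g(s)^\cplxtrans)^\cplxtrans ds$, which is \eqref{eq:convol-psi}. Substituting $t\mapsto T-t$ and writing out the convolution then yields \eqref{eq:psi-conv-wt-L}. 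The right-hand side is continuous, so choosing the right-continuous version of $\Psi\ast L$ gives the identity for every $t$.

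\textbf{Step 3 (converse).} Conversely, assume $\Psi\ast L = U + \int_0^\cdot G$. Convolve on the right with $\widetilde{K}$ and use $L\ast\widetilde{K}=\Id_{d^2}$ together with associativity. The left side becomes $\Psi\ast\Id_{d^2} = \int_0^\cdot \Psi$. For the right side, $U\ast\widetilde{K} = \int_0^\cdot U\widetilde{K}$ and $(\int_0^\cdot G)\ast\widetilde{K} = \int_0^\cdot (G\ast\widetilde{K})$. Both sides are therefore absolutely continuous functions with equal values, so differentiating gives $\Psi = U\widetilde{K} + G\ast\widetilde{K}$ almost everywhere. Reversing Step 1 recovers \eqref{eq:riccati_1} a.e.

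The main obstacle is justifying that the convolutions are well defined and associative with $\widetilde{K}\in\loc^2$, $G\in L^1$ and $L$ of bounded variation, which I would cite from \cite{gripenberg_Volterra_1990}. A secondary issue is the a.e. versus everywhere distinction. This is resolved by interpreting \eqref{eq:riccati_1} for $\psi\in L^2$ up to a.e. equivalence, or equivalently by taking its right-hand side as the representative of $\psi$.
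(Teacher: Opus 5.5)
Your route is essentially the paper's. Like the paper, you vectorize the conjugate-transposed Riccati--Volterra equation to get a row-vector Volterra equation with kernel $\widetilde{K}$, then convolve with $L$ and use $\widetilde{K}\ast L=\Id_{d^2}$ together with Fubini. The paper does this at fixed $t$, splitting into the $u$-term and the integral term; you do it after time reversal, via associativity. Your Step 3 is a real addition. The paper only proves the implication from \eqref{eq:riccati_1} to \eqref{eq:psi-conv-wt-L}, although it states an equivalence. Your converse (convolve with $\widetilde{K}$, use $L\ast\widetilde{K}=\Id_{d^2}$, differentiate) is the same argument the paper uses in Lemma~\ref{lm:transform-of-F-dZ}.

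The one thing to fix is the Kronecker identity in Step 1. It is wrong, and it is the only source of your permutation worry. For all $Y$ we have $\Tr(FKY)=\vc(F^\cplxtrans)^\cplxtrans\vc(KY)$ and $\vc(KY)=\vc(KY\Id_d)=(\Id_d\otimes K)\vc(Y)$. Hence
\[
\vc\bigl((FK)^\cplxtrans\bigr)^\cplxtrans=\vc(F^\cplxtrans)^\cplxtrans(\Id_d\otimes K)=\vc(F^\cplxtrans)^\cplxtrans\widetilde{K},
\]
not $\vc(F^\cplxtrans)^\cplxtrans(K\otimes\Id_d)$. So the vectorized, time-reversed equation is directly $\Psi=U\widetilde{K}+G\ast\widetilde{K}$, and no commutation matrix appears. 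Neither of your workarounds is needed. Restricting to $K=k\Id_d$ would only prove a special case of the lemma. Taken term by term, your identity is false whenever $uK$ or $g(s)K$ fails to be symmetric, so ``absorbing the permutation'' would need an extra argument.

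You likely picked this up from the intermediate $(K(r-s)^\top\otimes\Id_d)$ display in the stochastic Fubini step. The paper's own proof of this lemma has the same slip, writing $\vc(K^\top u)=(K^\top\otimes\Id_d)\vc(u)$. In both places the chain still ends at the correct $(\Id_d\otimes K)^\top$, that is, at $\widetilde{K}$. With this correction, your proof goes through as written.
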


\begin{proof}
	By applying the vectorization operator on the conjugated transposed Riccati--Volterra equation \eqref{eq:riccati_1}
	\begin{equation*}
		\psi(t)^\cplxtrans = K(T-t)^\top u^\cplxtrans + \int_{t}^{T} K(s-t)^\top \bigg(f(s) + B^\top(\psi(s)) - 2 \psi(s)\alpha \psi(s)\bigg)^\cplxtrans \, ds
	\end{equation*}
	and using the property $\vc(K^\top u) = (K^\top \otimes \Id_d) \vc(u) = (\Id_d \otimes K)^\top \vc(u)$, we see that it equals to
	\begin{align*}
		\vc(\psi(t)^\cplxtrans) = \widetilde{K}(T-t)^\top \vc(u^\cplxtrans) + \int_{t}^{T} \widetilde{K}(s-t)^\top \vc\Bigl(\bigl(f(s) + B^\top(\psi(s)) - 2 \psi(s)\alpha \psi(s)\bigr)^\cplxtrans\Bigr) \, ds.
	\end{align*}
	Since $L$ is the resolvent of the first kind of $\widetilde{K} \in \R^{d^2 \times d^2}$ this in turn implies
	\begin{align*}
		\int_{[0,T-t]}   \vc\bigl( \psi(t+s)^\cplxtrans \bigr)^\cplxtrans  L(ds) & = \int_{[0,T-t]} \vc(u^\cplxtrans)^\cplxtrans \widetilde{K}(T-(t+s)) L(ds)                                                                                                     \\
		                                                                         & \quad + \int_{[0,T-t]} \int_{t+s}^{T}  \vc\Bigl(\bigl(f(r) + B^\top(\psi(r)) - 2 \psi(r)\alpha \psi(r)\bigr)^\cplxtrans\Bigr)^\cplxtrans \widetilde{K}(r-(t+s)) \, dr \, L(ds) \\
		                                                                         & = \textbf{I} + \textbf{II}.
	\end{align*}
	For the first term we get
	\begin{align*}
		\textbf{I} =  \vc(u^\cplxtrans)^\cplxtrans \bigg( \int_{[0,T-t]} \widetilde{K}(T-(t+s)) L(ds) \bigg) = \vc(u^\cplxtrans)^\cplxtrans (\widetilde{K} \ast L)(T-t)  = \vc(u^\cplxtrans)^\cplxtrans .
	\end{align*}
	The second term, with an application of Fubinis theorem, equals
	\begin{align*}
		\textbf{II} & = \int_{[0,T-t]} \bigg( \int_{t+s}^{T} \vc\left(\big(f(r) + B^\top(\psi(r)) - 2 \psi(r)\alpha \psi(r)\big)^\cplxtrans\right)^\cplxtrans \widetilde{K}(r-(t+s))\, dr \bigg) L(ds) \\
		            & = \int_{0}^{T-t} \vc\Bigl(\big(f(t+u) + B^\top(\psi(t+u)) - 2 \psi(t+u)\alpha \psi(t+u)\big)^\cplxtrans\Bigr)^\cplxtrans \bigg( \int_{[0,u]} \widetilde{K}(u-s) L(ds)\bigg) du   \\
		            & = \int_{t}^{T} \vc\Bigl(\big(f(u) + B^\top(\psi(u)) - 2 \psi(u)\alpha \psi(u)\big)^\cplxtrans\Bigr)^\cplxtrans \, du.
	\end{align*}
	This shows \eqref{eq:psi-conv-wt-L}.
\end{proof}

The following theorem shows that, under mild additional assumptions on the kernel $\widetilde{K}$, the conditional Fourier--Laplace transform admits a representation which is affine in the past trajectory $\{X_s : s \leq t\}$ for every $t \in [0,T]$.

\begin{theorem}\label{thm:affine-on-past-path}
	Let $X = (X_t)_{t \in [0,T]}$ be an affine Volterra process on $\bS_d^+$. Let $\widetilde{K} = \Id_d \otimes K$ be continuous on $(0,T]$ with resolvent of the first kind $L$ with
	$\sup_{r \leq T} \|(\Delta_r \widetilde{K})\ast L\|_{TV} < \infty$. Fix $u \in \bS_d(\C)$ and $f \in L^1([0,T], \bS_d(\C))$. Let $\psi \in L^2([0,T], \bS_d(\C))$
	solve the Riccati--Volterra equation \eqref{eq:riccati_1} and let $Y$ be defined by \eqref{eq:Y-t}.

	\begin{enumerate}
		\item[(i)] it holds that $\vc(\psi(t - \cdot)^\cplxtrans)^\cplxtrans \ast L : [0,t] \to (\C^{d^2})^\ast$ is right-continuous and of bounded variation,
		      and that
		      \begin{align}\label{eq:Y-1-path-dependent}
			      \begin{split}
				      Y_t & = \bigg( \int_{[0,T]} \vc(\psi(s)^\cplxtrans)^\cplxtrans L(ds)\bigg)\vc(X_0) + \vc(\psi(t)^\cplxtrans)^\cplxtrans L(\{0\}) \vc(X_t)                                           \\
				          & \quad - \bigg( \int_{[0,t]} \vc(\psi(s)^\cplxtrans)^\cplxtrans L(ds) \bigg)\vc(X_0) + \int_{[0,t]} d\big(\vc(\psi(t-\cdot)^\cplxtrans)^\cplxtrans \ast L \big)(s)\vc(X_{t-s}) \\
				          & \quad + \Tr \biggl( \int_{t}^{T} \psi(s)b_0 ds - \int_{0}^{t} \Bigl(B^\top(\psi(s)) - 2\psi(s)\alpha \psi(s)\Bigr)X_s ds\biggr), \quad t \in [0,T].
			      \end{split}
		      \end{align}
		\item[(ii)] For all $t \in [0,T]$ let
		      \begin{equation}\label{eq:g_t}
			      g_t : [0,t] \to (\C^{d^2})^\ast, \quad g_t(r) = - \int_{(r,T-t+r]} \vc(\psi(t-r+s)^\cplxtrans)^\cplxtrans L(ds), \quad r \in [0,t].
		      \end{equation}
		      It then holds for all $t \in [0,T]$ that $g_t : [0,t] \to (\C^{d^2})^\ast$ is right-continuous and of bounded variation, and that
		      \begin{align}\label{eq:Y-2-path-dependent}
			      \begin{split}
				      Y_t & = -g_t(t)\vc(X_0) + \vc(\psi(t)^\cplxtrans)^\cplxtrans L(\{0\})\vc(X_t) + \int_{[0,t]} (dg_t(s))\vc(X_{t-s}) \\
				          & \quad + \Tr \biggl( \int_{0}^{t} f(s)X_s ds + \int_{t}^{T} \psi(s)b_0 ds \biggr), \quad t \in [0,T].
			      \end{split}
		      \end{align}
	\end{enumerate}
\end{theorem}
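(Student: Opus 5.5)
The plan is to transcribe the proof of \cite[Theorem 4.5]{abijaber_Affine_2019} (and its inhomogeneous version in \cite{ackermann_Inhomogeneous_2022}) into vectorized form, using Lemma~\ref{lm:transform-of-F-dZ} as the key tool.

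\textbf{Step 1: the convolution against $L$.} Fix $t$ and set $F(s):=\psi(t-s)$ on $[0,t]$. Apply Lemma~\ref{remark:psi-convolution} with terminal time $t$ in place of $T$; to do so, rewrite the Riccati--Volterra equation \eqref{eq:riccati_1} on $[0,t]$ with the shifted terminal datum
\begin{equation*}
\psi(r)=\bigl(uK(T-r)+\textstyle\int_t^T(\cdots)K(s-r)\,ds\bigr)+\int_r^t(\cdots)K(s-r)\,ds .
\end{equation*}
The first bracket is a shift $\Delta_{T-t}$ of the kernel applied to a fixed vector. This gives
\begin{equation*}
\vc(\psi(t-\cdot)^\cplxtrans)^\cplxtrans\ast L
= c_t\,(\Delta_{T-t}\widetilde K)\ast L+\int (\text{$L^1$ density})\,ds
\end{equation*}
for a constant vector $c_t$. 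The term $(\Delta_{T-t}\widetilde K)\ast L$ is of bounded variation by the hypothesis $\sup_r\|(\Delta_r\widetilde K)\ast L\|_{TV}<\infty$, and the integral term is absolutely continuous. Right-continuity follows from the continuity of $\widetilde K$ on $(0,T]$ and from the corresponding property of $\Delta_r\widetilde K\ast L$, as in \cite[Lemma 2.5/Theorem 4.5]{abijaber_Affine_2019}. This proves the regularity claim in (i).

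\textbf{Step 2: identity (i).} Write $Y_t$ via \eqref{eq:Y-t} as $Y_0+\Tr\bigl((\psi\ast dM^X)$-type term$\bigr)+2\int_0^t\Tr(X_s\psi\alpha\psi)\,ds$, where the stochastic integral equals $\int_0^t\Tr(\psi(s)\,dM^X_s)=\Tr(F\ast dM^X)(t)$ with $F=\psi(t-\cdot)$ reversed. Set $Z=\int b(X)\,ds+M^X$, so that $X-X_0=K\ast dZ$. Then
\begin{equation*}
\Tr(F\ast dM^X)=\Tr(F\ast dZ)-\int_0^t\Tr(\psi(s)b(X_s))\,ds .
\end{equation*}
Now apply \eqref{eq:transform-of-F-dZ} to $\Tr(F\ast dZ)$. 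The evaluation at $0$ produces $\vc(\psi(t)^\cplxtrans)^\cplxtrans L(\{0\})\vc(X_t)$, the measure part produces $\int_{[0,t]}d(\cdots)(s)\vc(X_{t-s})$, and the $X_0$ term gives $-(\int_{[0,t]}\vc(\psi(s)^\cplxtrans)^\cplxtrans L(ds))\vc(X_0)$ after time reversal. Finally, split $b=b_0+B$ and use the adjoint $B^\top$.

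The value of $Y_0$ in \eqref{eq:Y-t} must then be rewritten using \eqref{eq:psi-conv-wt-L} at $t=0$. By that identity,
\begin{equation*}
\Bigl(\int_{[0,T]}\vc(\psi^\cplxtrans)^\cplxtrans dL\Bigr)\vc(X_0)=\Tr\Bigl(uX_0+\int_0^T(f+B^\top\psi-2\psi\alpha\psi)X_0\,ds\Bigr).
\end{equation*}
Comparing with $Y_0$, the leftover is $\int_0^T\Tr(\psi b_0)\,ds$. Together with $-\int_0^t\Tr(\psi b_0)\,ds$ from Step 2, this yields $\int_t^T\Tr(\psi b_0)\,ds$. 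Collecting terms gives \eqref{eq:Y-1-path-dependent}. Because both sides are right-continuous, Lemma~\ref{lm:transform-of-F-dZ} upgrades the identity to hold up to indistinguishability.

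\textbf{Step 3: identity (ii).} Use \eqref{eq:psi-conv-wt-L} at intermediate times to absorb $\int_0^t\Tr((B^\top\psi-2\psi\alpha\psi)X_s)\,ds$ into a single BV function. Concretely, $g_t(r)$ is the tail $\int_{(r,T-t+r]}$ of the convolution of $\psi$ against $L$. Subtracting it from the full integral at shift $t-r$ gives, via \eqref{eq:psi-conv-wt-L}, the increment $\vc\bigl(u+\int_{t-r}^T(\cdots)\bigr)$. Differentiating in $r$ reproduces exactly the density $(f+B^\top\psi-2\psi\alpha\psi)(t-r)$. Hence the $dr$-part of $dg_t$ cancels the Lebesgue integral in (i), up to the $f$ term, which remains as $\int_0^t\Tr(fX_s)\,ds$. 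The boundary values reduce to $-g_t(t)\vc(X_0)$ because $g_t(t)=-\int_{(t,T]}\vc(\psi^\cplxtrans)^\cplxtrans dL$. Bounded variation of $g_t$ follows from Step 1, since $g_t$ differs from $\vc(\psi(t-\cdot)^\cplxtrans)^\cplxtrans\ast L$ by an absolutely continuous function.

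\textbf{Main obstacle.} The difficulty is Step 1 together with the bookkeeping in Step 3. One must show that the convolution of the reversed $\psi$ with $L$ has bounded variation, uniformly via the $\Delta_r$ hypothesis, while handling the matrix, conjugate-transpose and vectorized orderings. The first thing to try is to verify the identity
\begin{equation*}
\vc(\psi(t-\cdot)^\cplxtrans)^\cplxtrans\ast L=\vc(u^\cplxtrans)^\cplxtrans(\Delta_{T-t}\widetilde K)\ast L+\int_t^T h(s)^\cplxtrans(\Delta_{s-t}\widetilde K)\ast L\,ds+\int_{t-\cdot}^t h\,ds,
\end{equation*}
with $h=\vc\bigl((f+B^\top\psi-2\psi\alpha\psi)^\cplxtrans\bigr)$. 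This follows by a Fubini argument as in Lemma~\ref{remark:psi-convolution}, and it makes all regularity claims immediate.
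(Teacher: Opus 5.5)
Your proposal follows essentially the same route as the paper. You rewrite $Y_0$ via \eqref{eq:psi-conv-wt-L}, apply Lemma~\ref{lm:transform-of-F-dZ} to $\Tr((\psi(t-\cdot)\ast dZ)(t))$, and for (ii) split $d(\vc(\psi(t-\cdot)^\cplxtrans)^\cplxtrans\ast L)$ into $dg_t$ plus the absolutely continuous measure coming from \eqref{eq:convol-psi}. The paper defers the right-continuity and bounded-variation claim to \cite[Theorem 3.7]{ackermann_Inhomogeneous_2022}, whose argument is the decomposition you display in your last paragraph.

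One correction to Step 1: the bracket $uK(T-r)+\int_t^T(\cdots)K(s-r)\,ds$ is not a single shift $\Delta_{T-t}$ applied to a constant vector $c_t$. It is a superposition of shifts $\Delta_{s-t}$, $s\in[t,T]$, which is exactly why the uniform bound $\sup_r\|(\Delta_r\widetilde K)\ast L\|_{TV}<\infty$ is needed. Your final identity handles this correctly.
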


\begin{proof}
	Note first that for all $t \in [0,T]$ it holds
	\begin{equation}\label{eq:g-in-proof}
		g_t = \vc(\psi(t - \cdot)^\cplxtrans)^\cplxtrans \ast L - (\vc(\psi(T - \cdot)^\cplxtrans)^\cplxtrans \ast L)(T-t+\cdot).
	\end{equation}
	The proof that the function $\vc(\psi(t-\cdot)^\cplxtrans)^\cplxtrans \ast L : [0,t] \to (\C^{d^2})^\ast$ is right-continuous and of bounded variation follows analogously
	to the second part of the proof in \cite[Theorem 3.7]{ackermann_Inhomogeneous_2022} with the function $G : [0,T] \to (\C^{d^2})^\ast$ in the proof replaced by
	$G(s) = \vc\Bigl(\big(f(s) + B^\top(\psi(s)) - 2\psi(s) \alpha \psi(s)\big)^\cplxtrans \Bigr)^\cplxtrans$, $s \in [0,T]$.
	Since \eqref{eq:convol-psi} shows that $\vc(\psi(T - \cdot)^\cplxtrans)^\cplxtrans \ast L$ is continuous and of bounded variation, it then follows that for all
	$t \in [0,T]$ also $g_t : [0,t] \to (\C^{d^2})^\ast$ is right-continuous and of bounded variation.

	To derive the expressions \eqref{eq:Y-1-path-dependent} and \eqref{eq:Y-2-path-dependent} for $Y$, observe that we obtain from the definition
	\eqref{eq:Y-t} of $Y_0$ together with Lemma \ref{remark:psi-convolution} that
	\begin{align*}
		Y_0 & = \Tr\biggl(\Big( u + \int_{0}^{T} f(s) + B^\top(\psi(s)) - 2\psi(s) \alpha \psi(s) \, ds \Big)X_0 \biggr) + \Tr \biggl( \int_{0}^{T} \psi(s)b_0 \, ds\biggr)                            \\
		    & = \vc\biggl(\Big(u + \int_{0}^{T} f(s) + B^\top(\psi(s)) - 2\psi(s) \alpha \psi(s) \, ds \Big)^\cplxtrans \biggr)^\cplxtrans \vc(X_0) + \Tr \biggl( \int_{0}^{T} \psi(s)b_0 \, ds\biggr) \\
		    & = \big(\vc(\psi(T - \cdot)^\cplxtrans)^\cplxtrans \ast L\big)(T)\vc(X_0) + \Tr \biggl( \int_{0}^{T} \psi(s)b_0 \, ds\biggr).
	\end{align*}
	Inserting this in definition \eqref{eq:Y-t} of $Y$ yields for all $t \in [0,T]$
	\begin{align}\label{eq:path-dep-temp-1}
		\begin{split}
			Y_t & = \big(\vc(\psi(T - \cdot)^\cplxtrans)^\cplxtrans \ast L \big)(T)\vc(X_0) + \Tr \biggl( \int_{0}^{T} \psi(s)b_0 \, ds                                    \\
			    & \quad + \int_{0}^{t} \psi(s) \sqrt{X_s} dW_s Q + \int_{0}^{t} Q^\top dW_s^\top \sqrt{X_s}\psi(s) + 2 \int_{0}^{t} X_s\psi(s)\alpha\psi(s) \, ds\biggr)   \\
			    & = \big(\vc(\psi(T - \cdot)^\cplxtrans)^\cplxtrans \ast L \big)(T)\vc(X_0) + \Tr \biggl( \int_{0}^{t} \psi(s)b(X_s) \, ds + \int_{t}^{T} \psi(s)b_0 \, ds \\
			    & \quad + \int_{0}^{t} \psi(s) \sqrt{X_s} dW_s Q + \int_{0}^{t} Q^\top dW_s^\top \sqrt{X_s}\psi(s)                                                        \\
			    & \quad - \int_{0}^{t} \big(B^\top(\psi(s)) - 2\psi(s)\alpha \psi(s)\big)X_s \, ds\biggr).
		\end{split}
	\end{align}
	Let $Z = \int_{0}^{\cdot} b(X_s) \, ds + \int_{0}^{\cdot} (\sqrt{X_s} dW_s Q + Q^\top dW_s^\top \sqrt{X_s})$. It then holds for all $t \in [0,T]$
	\begin{align}\label{eq:path-dep-temp-2}
		\begin{split}
			\Tr\bigl( (\psi(t-\cdot) \ast dZ)(t)\bigr) = \Tr\biggl( \int_{0}^{t} \psi(s)b(X_s) \, ds + \int_{0}^{t} \psi(s)\Bigl(\sqrt{X_s} dW_s Q + Q^\top dW^\top_s\sqrt{X_s}\Bigr)\biggr).
		\end{split}
	\end{align}
	Since for all $t \in [0,T]$ the function $\vc(\psi(t-\cdot)^\cplxtrans)^\cplxtrans \ast L : [0,t] \to (\C^{d^2})^\ast$ is right-continuous and of bounded variation,
	Lemma \ref{lm:transform-of-F-dZ} shows that for all $t \in [0,T]$
	\begin{align}\label{eq:path-dep-temp-3}
		\begin{split}
			\Tr \bigl((\psi(t-\cdot) \ast dZ)(t)\bigr) & = \big(\vc(\psi(t-\cdot)^\cplxtrans)^\cplxtrans \ast L\big)(0)\vc(X_t) - \big(\vc(\psi(t-\cdot)^\cplxtrans)^\cplxtrans \ast L \big)(t)\vc(X_0) \\
			                                           & \quad + \big(d(\vc(\psi(t-\cdot)^\cplxtrans)^\cplxtrans \ast L \big) \ast \vc(X))(t)
		\end{split}
	\end{align}
	We combine \eqref{eq:path-dep-temp-1}, \eqref{eq:path-dep-temp-2} and \eqref{eq:path-dep-temp-3} to obtain for all $t \in [0,T]$
	\begin{align*}
		Y_t & = \big(\vc(\psi(T-\cdot)^\cplxtrans)^\cplxtrans \ast L\big)(T)\vc(X_0) + \vc(\psi(t)^\cplxtrans)^\cplxtrans L(\{0\})\vc(X_t) - \big(\vc(\psi(t-\cdot)^\cplxtrans)^\cplxtrans \ast L \big)(t)\vc(X_0) \\
		    & \quad + \big(d(\vc(\psi(t-\cdot)^\cplxtrans)^\cplxtrans \ast L \big) \ast \vc(X))(t)                                                                                                                 \\
		    & \quad + \Tr \biggl( \int_{t}^{T} \psi(s)b_0 \, ds - \int_{0}^{t} \Bigl(B^\top(\psi(s)) - 2\psi(s)\alpha \psi(s)\Bigr)X_s \, ds\biggr).
	\end{align*}
	This proves \eqref{eq:Y-1-path-dependent}. Furthermore, observe that by \eqref{eq:g-in-proof} we have for all $t \in [0,T]$
	\begin{equation*}
		\big(\vc(\psi(T-\cdot)^\cplxtrans)^\cplxtrans \ast L \big)(T)\vc(X_0) - \big(\vc(\psi(t-\cdot)^\cplxtrans)^\cplxtrans \ast L \big)(t)\vc(X_0) = -g_t(t)\vc(X_0)
	\end{equation*}
	and
	\begin{equation*}
		d(\vc(\psi(t-\cdot)^\cplxtrans)^\cplxtrans \ast L) = dg_t + \mu_t
	\end{equation*}
	where $\mu_t$ denotes the measure associated to $r \mapsto \big(\vc(\psi(T-\cdot)^\cplxtrans)^\cplxtrans \ast L \big)(T-t+r)$. It therefore follows for all $t \in [0,T]$
	\begin{align}\label{eq:Y-path-dep-w-g}
		\begin{split}
			Y_t & = -g_t(t)\vc(X_0) + \vc(\psi(t)^\cplxtrans)^\cplxtrans L(\{0\})\vc(X_t) + \big((dg_t) \ast \vc(X)\big)(t) + \big(\mu_t \ast \vc(X)\big)(t) \\
			    & \quad + \Tr \biggl( \int_{t}^{T} \psi(s)b_0 \, ds - \int_{0}^{t} \Bigl(B^\top(\psi(s)) - 2\psi(s)\alpha \psi(s)\Bigr)X_s \, ds\biggr).
		\end{split}
	\end{align}
	Note that by \eqref{eq:convol-psi} the measure $\mu_t$ for each $t \in [0,T]$ is given by
	\begin{align*}
		\mu_t\big((r,l]\big) = \vc \Biggl(\bigg( \int_{r}^{l} f(t-s) + B^\top(\psi(t-s)) - 2\psi(t-s)\alpha \psi(t-s) \, ds\bigg)^\cplxtrans \Biggr)^\cplxtrans, \quad r,l \in [0,t], \ r < l.
	\end{align*}
	It thus holds that
	\begin{align*}
		\int_{[0,t]} \mu_t(ds) \vc(X_{t-s}) & = \int_{0}^{t} \vc\Bigl( \bigl(f(t-s) + B^\top(\psi(t-s)) - 2\psi(t-s)\alpha \psi(t-s) \bigr)^\cplxtrans \Bigr)^\cplxtrans \vc(X_{t-s})\, ds        \\
		                                    & = \int_{0}^{t} \vc\Bigl(\bigl(f(r) + B^\top(\psi(r)) - 2\psi(r)\alpha \psi(r) \bigr)^\cplxtrans \Bigr)^\cplxtrans \vc(X_r)\, dr, \quad t \in [0,T].
	\end{align*}
	Rewriting this with the trace operator and substituting it into \eqref{eq:Y-path-dep-w-g} yields \eqref{eq:Y-2-path-dependent}.
\end{proof}

\begin{remark}\label{rem:affine-path-scalar-kernel}
	In the case that $K = k\Id_d$, as in Assumption~\ref{ass:admissible-parameters}, then $\widetilde{K} = k\Id_{d^2}$,
	and the resolvent of the first kind of $\widetilde{K}$ is simply given by $L\Id_{d^2}$, where $L$ denotes the scalar resolvent of $k$.
	Hence, the path-dependent representation in Theorem~\ref{thm:affine-on-past-path} can be written entirely in matrix notation.
	In particular, defining
	\begin{equation*}
		g_t(r) := - \int_{(r,T-t+r]} \psi(t-r+s)\,L(ds),\quad r \in [0,t],
	\end{equation*}
	part~(ii) becomes
	\begin{align*}
			      \begin{split}
				      Y_t  = \Tr\biggl(-g_t(t)X_0 + L(\{0\})\psi(t)X_t + \int_{[0,t]} X_{t-s} (dg_t(s)) + \int_{0}^{t} f(s)X_s ds + \int_{t}^{T} \psi(s)b_0 ds \biggr), \quad t \in [0,T].
			      \end{split}
	\end{align*}
\end{remark}

\section{A commodity model with Volterra--Wishart variance-covariance}\label{section:Commodity-models}

In this section, we extend the Gibson--Schwartz model by a Volterra--Wishart $2 \times 2$ variance-covariance-process $V$ and calculate its Fourier--Laplace transform, in spirit of \cite{schneider_Revisiting_2024}.

\subsection{The extended Gibson and Schwartz model}
Fix $T < \infty$, $\kappa > 0$, $\mu, \theta \in \R$, and deterministic initial value $S_0 > 0$, $\delta_0 \in \R$ and $V_0 \in \bS_2^+$. Throughout this section $K = k\Id_2$, where $k$ satisfies Assumptions~\ref{K-regularity-condition} and \ref{ass:k-PSD-conditions}, with $b_0 = b$ and $B(x) = Mx + xM^\top$.
We consider the extended Gibson--Schwartz model
\begin{equation}\label{eq:GS-model}
d\begin{pmatrix}
    \log S_t \\
    \delta_t
\end{pmatrix}
=
\begin{pmatrix}
    \mu-\delta_t-\frac{1}{2} V_{11,t} \\
    \kappa(\theta-\delta_t)
\end{pmatrix}dt
+ \sqrt{V_t}\,dZ_t,\quad t \in [0,T],
\end{equation}
where $Z$ is a two-dimensional Brownian motion and $V = (V_t)_{t \in [0,T]}$ is the $\bS_2^+$-valued Volterra--Wishart process
\begin{align}\label{eq:GS-volterra-wishart}
	\begin{split}
		V_t & = V_0 + \int_{0}^{t} K(t-s)\bigl( b + MV_s + V_s M^\top\bigr) ds + \int_{0}^{t} K(t-s) \bigl(\sqrt{V_s}dW_s Q + Q^\top dW_s^\top \sqrt{V_s} \bigr),
	\end{split}
\end{align}
with $Q,M \in \R^{2 \times 2}$, covariance parameter $ \alpha = Q^\top Q \in \bS_2^+$, $b \succeq k(0)(d-1)\alpha$.
Here, $W = \begin{pmatrix}
		W_{11} & W_{12} \\
		W_{21} & W_{22}
	\end{pmatrix}$ is a $2\times 2$-dimensional Brownian motion, and it can be correlated with $Z$ as follows. Define the correlation vector
\begin{equation*}
	\rho = \begin{pmatrix}
		\rho_1 \\
		\rho_2
	\end{pmatrix},
\end{equation*}
subject to
\begin{equation*}
	|\rho| = \sqrt{\rho^\top \rho} = \sqrt{\rho_1^2 + \rho_2^2} \, \leq 1,
\end{equation*}
and set
\begin{equation*}
	Z = \begin{pmatrix}
		Z_1 \\
		Z_2
	\end{pmatrix}
	= W \rho + \sqrt{1-\rho^\top \rho}\, \widetilde{W},
\end{equation*}
where $\widetilde{W} = \begin{pmatrix}
		\widetilde{W}_1 \\
		\widetilde{W}_2
	\end{pmatrix}$ is a Brownian motion independent of $W$.

Our aim is to calculate the conditional Fourier--Laplace transform
\begin{equation}\label{eq:GS-transform-objective}
\Phi_t(u_1,u_2,u_3) := E\left[ \exp\bigl( u_1 \log S_T + u_2 \delta_T - \Tr(u_3V_T) \bigr) \mid \cF_t \right],
\end{equation}
where $\bF = (\cF_t)_{t \in [0,T]}$ denotes the usual augmentation of the filtration generated by $(V,W,\widetilde{W})$. The natural domain is given by
\begin{equation*}
 u_1, u_2 \in i\R, \quad u_3 \in \bS_2^{+} + i\bS_2.
\end{equation*}
We first express $u_1 \log S_T + u_2 \delta_T$ using deterministic integrands. We then verify its exponential-affine
transform by a direct stochastic-exponential calculation. This avoids conditioning on the entire future
covariance path, which need not be independent of $\widetilde{W}$ for an arbitrary weak solution.

Let $e_1,e_2$ denote the canonical basis vectors of $\R^2$. From \eqref{eq:GS-model}, we have
\begin{align*}
d\log S_t &= \left(\mu - \delta_t - \frac{1}{2} V_{11,t} \right)dt + e_1^\top \sqrt{V_t} \, dZ_t,\\
d\delta_t &= \kappa(\theta - \delta_t)dt + e_2^\top \sqrt{V_t} \,dZ_t.
\end{align*}
Multiplication of the second equation by the integrating factor $e^{\kappa t}$ yields
\begin{equation*}
	d\bigl(e^{\kappa t} \delta_t\bigr) = \kappa \theta e^{\kappa t}\,dt + e^{\kappa t} e_2^\top \sqrt{V_t}\,dZ_t.
\end{equation*}
Consequently,
\begin{equation}\label{eq:GS-delta-solution}
\delta_t = \theta + e^{-\kappa t}(\delta_0 - \theta) + \int_{0}^{t} e^{-\kappa(t-s)} e_2^\top \sqrt{V_s}\,dZ_s.
\end{equation}
Integrating \eqref{eq:GS-delta-solution} over $[0,T]$ and applying the stochastic Fubini theorem yields
\begin{equation*}
	\int_{0}^{T} \delta_s \, ds = \frac{1-e^{-\kappa T}}{\kappa} \delta_0 + \theta \left(T - \frac{1-e^{-\kappa T}}{\kappa}\right) + \int_{0}^{T} \frac{1-e^{-\kappa(T-s)}}{\kappa} e_2^\top \sqrt{V_s}\,dZ_s.
\end{equation*}
On the other hand, we also have
\begin{equation}\label{eq:GS-log-price}
	\log S_T = \log S_0 + \mu T - \int_{0}^{T} \delta_s \, ds - \frac{1}{2} \int_{0}^{T} V_{11,s} \, ds + \int_{0}^{T} e_1^\top \sqrt{V_s} \, dZ_s,
\end{equation}
and combining \eqref{eq:GS-delta-solution}--\eqref{eq:GS-log-price}, we obtain
\begin{equation}\label{eq:GS-linear-combination}
u_1\log S_T + u_2 \delta_T = C_T + \int_{0}^{T} \chi(s) \sqrt{V_s} \, dZ_s - \frac{u_1}{2} \int_{0}^{T} V_{11,s} \, ds,
\end{equation}
where the deterministic parts equal
\begin{equation*}
	C_T = u_1\left[ \log S_0 + \mu T - \frac{1-e^{-\kappa T}}{\kappa} \delta_0 - \theta\left( T-\frac{1-e^{-\kappa T}}{\kappa}\right) \right] + u_2\left[\theta + e^{- \kappa T}(\delta_0 - \theta)\right]
\end{equation*}
and
\begin{equation}\label{eq:GS-chi}
\chi(s) =
\begin{pmatrix}
\chi_1(s) & \chi_2(s)
\end{pmatrix}
:=
\begin{pmatrix}
 u_1 & u_2 e^{-\kappa(T-s)} -\dfrac{u_1}{\kappa} \bigl(1 - e^{-\kappa(T-s)}\bigr)
\end{pmatrix}.
\end{equation}

The stochastic integral in \eqref{eq:GS-linear-combination} decomposes by definition of $Z$ as
\begin{align*}
\int_{0}^{T} \chi(s) \sqrt{V_s} \, dZ_s = \int_{0}^{T} \chi(s)\sqrt{V_s} dW_s \rho + \sqrt{1-\rho^\top \rho} \int_{0}^{T} \chi(s)\sqrt{V_s}\,d\widetilde{W}_s.
\end{align*}
So \eqref{eq:GS-transform-objective} equals now
\begin{align*}
	\Phi_t(u_1, u_2, u_3) := \exp(C_T) &E \biggl[ \exp\biggl(\int_{0}^{T} \chi(s)\sqrt{V_s} dW_s \rho + \sqrt{1-\rho^\top \rho} \int_{0}^{T} \chi(s)\sqrt{V_s}\,d\widetilde{W}_s \\
	& \qquad - \frac{u_1}{2} \int_{0}^{T} V_{11,s} \, ds - \Tr\bigl(u_3 V_T\bigr)\biggr) \biggm\vert \cF_t \biggr]
\end{align*}
The coefficients $\chi$ are purely imaginary. We now rewrite the Fourier--Laplace transform in a form to which Proposition~\ref{prop:additional-brownian-functional} can be applied, while keeping both Brownian terms in the final stochastic exponential.

Since $u_1,u_2 \in i\R$, there exists a deterministic function $\eta : [0,T] \to \R^{1 \times 2}$ such that
$\chi(s) = i\eta(s)$.
Define furthermore
\begin{equation*}
H(s) := \eta(s)^\top\rho^\top \in \R^{2 \times 2}, \qquad N_t := \int_{0}^{t} \Tr\bigl(H(s)^\top \sqrt{V_s} \, dW_s\bigr).
\end{equation*}
By cyclicity of the trace it holds
\begin{equation*}
iN_t = \int_{0}^{t} \chi(s)\sqrt{V_s} dW_s \rho.
\end{equation*}
Thus, the part of the Fourier--Laplace transform driven by the Brownian motion $W$ is precisely the additional Brownian functional covered by Proposition~\ref{prop:additional-brownian-functional} with $z = i$.
For the remaining terms, set
\begin{equation}\label{eq:GS-f0}
f_0(s) := \frac{1}{2} \left(u_1e_1e_1^\top -(1-\rho^\top \rho)\chi(s)^\top\chi(s)\right),
\end{equation}
since then it follows that
\begin{equation*}
	- \int_{0}^{T} \Tr\bigl(f_0(s)V_s\bigr) = - \frac{u_1}{2} \int_{0}^{T} V_{11,s} \, ds + \frac{1-\rho^\top \rho}{2} \int_{0}^{T} \chi(s)V_s\chi(s)^\top.
\end{equation*}

Substituting those expressions in the Fourier--Laplace transform we arrive at
\begin{align*}
    \Phi_t(u_1,u_2,u_3) = \exp(C_T) &E\biggl[ \exp\biggl(iN_T - \int_{0}^{T} \Tr\bigl(f_0(s)V_s\bigr)\,ds - \Tr(u_3 V_T) \\
        & \qquad+ \sqrt{1-\rho^\top\rho} \int_{0}^{T} \chi(s)\sqrt{V_s}\,d\widetilde W_s - \frac{1-\rho^\top\rho}{2} \int_{0}^{T} \chi(s)V_s\chi(s)^\top\,ds \biggr) \biggm| \cF_t \biggr].
\end{align*}
The first line of the exponent is of the form treated by Proposition~\ref{prop:additional-brownian-functional}, while the last two terms form a stochastic exponential associated with the independent Brownian motion $\widetilde{W}$.

Notice that
\begin{equation*}
\re f_0(s) = \frac{1-\rho^\top \rho}{2} \eta(s)^\top\eta(s) \succeq 0, \quad s \in [0,T].
\end{equation*}
For a solution of the following Riccati equation, Proposition~\ref{prop:additional-brownian-functional} gives us a local-martingale candidate with
\begin{equation*}
z = i, \qquad u = u_3, \qquad f = f_0, \qquad H(s) = \eta(s)^\top\rho^\top
\end{equation*}
and the associated Riccati--Volterra equation therefore is
\begin{align}\label{eq:GS-riccati-preliminary}
	\begin{split}
		\psi(t) = u_3K(T-t) &+ \int_{t}^{T} \Bigl( f_0(s) + M^\top\psi(s) + \psi(s)M
+i\bigl[ H(s)Q\psi(s) +\psi(s)Q^\top H(s)^\top \bigr] \\
&\qquad  +\frac{1}{2} H(s)H(s)^\top - 2\psi(s)\alpha\psi(s) \Bigr) K(s-t)\,ds.
	\end{split}
\end{align}
Since $iH(s) = \chi(s)^\top\rho^\top$ and $H(s)H(s)^\top = -(\rho^\top\rho)\chi(s)^\top\chi(s)$,
we obtain
\begin{equation}\label{eq:GS-f}
f(s) := f_0(s) + \frac{1}{2}H(s)H(s)^\top = \frac{1}{2}\left(u_1e_1e_1^\top - \chi(s)^\top\chi(s) \right).
\end{equation}
Also, in particular, $\re f(s) = \frac{1}{2} \eta(s)^\top \eta(s) \succeq 0$ and in this case $f$ is given explicitly by
\begin{equation*}
f(s) = \frac{1}{2}
\begin{pmatrix}u_1-\chi_1(s)^2
&-\chi_1(s)\chi_2(s)
\\[1mm]
-\chi_1(s)\chi_2(s)
&-\chi_2(s)^2
\end{pmatrix}.
\end{equation*}
Furthermore, $i\bigl[ H(s)Q\psi(s) + \psi(s)Q^\top H(s)^\top \bigr] = \chi(s)^\top\rho^\top Q\psi(s)+ \psi(s)Q^\top\rho\,\chi(s)$.
Consequently, \eqref{eq:GS-riccati-preliminary} reduces to
\begin{align}\label{eq:GS-full-riccati}
	\begin{split}
		\psi(t) = u_3K(T-t) &+ \int_{t}^{T} \Bigl(f(s) + M^\top\psi(s) + \psi(s)M \\
&\qquad  +\chi(s)^\top\rho^\top Q\psi(s) + \psi(s)Q^\top\rho\chi(s) - 2\psi(s)\alpha\psi(s) \Bigr) K(s-t)\,ds.
	\end{split}
\end{align}
Equivalently, \eqref{eq:GS-full-riccati} can be written in the compact form
\begin{align}\label{eq:GS-full-riccati-compact}
	\begin{split}
		\psi(t) = u_3K(T-t) + \int_{t}^{T} \Bigl(f(s) + M^\chi(s)^\top\psi(s) + \psi(s)M^\chi(s) - 2\psi(s)\alpha\psi(s) \Bigr) K(s-t)\,ds
	\end{split}
\end{align}
with $M^\chi(s) := M+Q^\top\rho\,\chi(s)$.
For a given solution $\psi \in L^2([0,T],\bS_2(\C))$ of \eqref{eq:GS-full-riccati}, let $Y^{i,H}$ be the process from Proposition~\ref{prop:additional-brownian-functional}. Then, $\exp(iN-Y^{i,H})$ is a local martingale. 
Moreover, by the terminal condition in Proposition~\ref{prop:additional-brownian-functional},
\begin{equation*}
	Y_T^{i,H} = \Tr \bigl(u_3 V_T\bigr) + \int_{0}^{T} \Tr \bigl(f_0(s) V_s\bigr) \, ds.
\end{equation*}
Hence,
\begin{equation*}
	\exp(iN_T - Y_T^{i,H}) = \exp \biggl(iN_T - \Tr \bigl(u_3 V_T\bigr) - \int_{0}^{T} \Tr\bigl(f_0(s)V_s\bigr) \, ds\biggr).
\end{equation*}
Comparing this with the last derived representation of $\Phi_t$ shows explicitly that the only remaining factor is the stochastic exponential associated with $\widetilde{W}$. Indeed, define
\begin{equation*}
    \cE_t^\perp := \exp\biggl( \sqrt{1-\rho^\top\rho} \int_{0}^{t} \chi(s)\sqrt{V_s} \, d\widetilde{W}_s - \frac{1-\rho^\top\rho}{2} \int_{0}^{t} \chi(s)V_s\chi(s)^\top \, ds \biggr).
\end{equation*}
Then the Fourier--Laplace exponential can be written as
\begin{equation*}
    \exp\bigl( u_1\log S_T + u_2\delta_T - \Tr(u_3 V_T) \bigr) = \exp(C_T) \exp\bigl(iN_T-Y_T^{i,H}\bigr) \cE_T^\perp.
\end{equation*}
Thus, the original Fourier--Laplace exponential has been decomposed into a deterministic factor and two stochastic exponentials.
For completeness, the process $Y^{i,H}$ is given by
\begin{align*}
	Y^{i,H}_t &= Y_0^{i,H} + \Tr \biggl( \int_{0}^{t} \psi(s) \sqrt{V_s} dW_s Q + \int_{0}^{t} Q^\top dW_s^\top \sqrt{V_s} \psi(s) \\
	& \quad + \int_{0}^{t} V_s \Bigl( 2\psi(s)\alpha \psi(s) + \frac{1}{2} (\rho^\top \rho) \chi(s)^\top \chi(s) - \chi(s)^\top \rho^\top Q \psi(s) - \psi(s)Q^\top \rho \chi(s) \Bigr) ds \biggr),
\end{align*}
with 
\begin{align*}
	Y_0^{i,H} = \Tr \biggl( u_3 V_0 + \int_{0}^{T} \psi(s)b + V_0 \bigl[f(s) + M^\chi(s)^\top \psi(s) + \psi(s)M^\chi(s) - 2\psi(s)\alpha \psi(s)\bigr]ds\biggr).
\end{align*}

The local martingale part of $\exp(iN-Y^{i,H})$ is driven by $W$, whereas $\cE^\perp$ is driven by $\widetilde{W}$. Since $W$ and $\widetilde{W}$ are independent Brownian motions, their quadratic covariation vanishes. Hence, It\^o's product formula implies that
\begin{equation*}
    M_t := \exp\bigl(iN_t-Y_t^{i,H}\bigr) \cE_t^\perp
\end{equation*}
is again a local martingale.
Using the definitions of $N$ and $Z$ this can equivalently be written as
\begin{equation*}
    M_t = \exp\biggl( \int_{0}^{t} \chi(s)\sqrt{V_s}\,dZ_s - \frac{1-\rho^\top\rho}{2} \int_{0}^{t} \chi(s)V_s\chi(s)^\top\,ds - Y_t^{i,H} \biggr).
\end{equation*}
Thus, once $M$ is shown to be a true martingale, its terminal value satisfies
\begin{equation*}
    \exp(C_T)M_T = \exp\bigl( u_1\log S_T + u_2\delta_T - \Tr(u_3 V_T) \bigr).
\end{equation*}
The martingale property then yields the conditional transform
\begin{equation}\label{eq:GS-final-transform}
    \Phi_t(u_1,u_2,u_3)
    =
    \exp(C_T)M_t
    =
    \exp\Biggl(
        C_T
        + \int_0^t \chi(s)\sqrt{V_s}\,dZ_s
        - \frac{1-\rho^\top\rho}{2}
          \int_0^t \chi(s)V_s\chi(s)^\top\,ds
        - Y_t^{i,H}
    \Biggr).
\end{equation}
In particular,
\begin{equation*}
    E\Bigl[
        \exp\bigl(
            u_1\log S_T
            + u_2\delta_T
            - \Tr(u_3V_T)
        \bigr)
    \Bigr]
    =
    \exp\bigl(C_T-Y_0^{i,H}\bigr).
\end{equation*}
It therefore remains to establish existence of a solution to \eqref{eq:GS-full-riccati} and to verify that the local martingale $M$ is a true martingale.

\begin{assumption}\label{K-regularity-in-resolvent}
    For every $h \in [0,1]$, the shifted kernel $\Delta_h k$ admits a resolvent
    of the first kind $L_h$. We assume that $L_h$ is nonnegative and that $s \mapsto L_h([s,(s+t) \wedge T])$ is nonincreasing for every $t \geq 0$. Denote by $L$ the resolvent of the first kind of $k$.
\end{assumption}

\begin{theorem}\label{thm:GS-model-affine}
	Fix $T < \infty$. Assume that $K = k\Id_2$ satisfies Assumptions~\ref{K-regularity-condition}, \ref{ass:k-PSD-conditions}, and \ref{K-regularity-in-resolvent}.
	\begin{itemize}
		\item[(i)] The Volterra--Wishart system \eqref{eq:GS-model}--\eqref{eq:GS-volterra-wishart} admits a continuous weak solution $(\log S,\delta,V)$ with values in $\R^2 \times \bS_2^+$ for any initial condition $(\log S_0,\delta_0,V_0) \in\R^2\times\bS_2^+$. The paths of $V$ are Hölder continuous of any order less than $\gamma/2$, where $\gamma$ is the constant associated with $k$ and $T$ in Assumption~\ref{K-regularity-condition}.
		\item[(ii)] Let $u_3 \in\bS_2^+$ and let $f_L \in L^1([0,T],\bS_2^+)$ be real-valued. Then the Riccati--Volterra equation
		\begin{equation}\label{eq:GS-real-riccati}
		\psi(t) = u_3k(T-t)+ \int_{t}^{T} k(s-t)\bigl(f_L(s)+M^\top\psi(s)+\psi(s)M-2\psi(s)\alpha\psi(s)\bigr)\,ds
		\end{equation}
		has a unique solution $\psi \in L^2([0,T],\bS_2^+)$. Moreover, the corresponding conditional Laplace functional is exponential-affine, i.e. if $Y^L$ denotes the process in Theorem~\ref{thm:main-thm} associated with $u = u_3$ and $f = f_L$, then
		\begin{equation}\label{eq:GS-Laplace-transform}
		E\biggl[\exp\Bigl(- \Tr(u_3V_T) - \int_{0}^{T}\Tr(f_L(s)V_s) \, ds\Bigr) \Bigm| \cF_t \biggr] = \exp(-Y_t^L), \quad t \in [0,T].
		\end{equation}
		In particular, for $f_L\equiv0$ this gives the conditional Laplace transform $\Phi_t(0,0,u_3)$.
		\item[(iii)] Assume in addition that $Q \in \GL_2$ and $k \in C^2([0,T])$. Let $u_1,u_2 \in i\R$ and $u_3 \in \bS_2^{+} + i\bS_2$, and let $\chi$, $f$ and $M^\chi$ be given by \eqref{eq:GS-chi}, \eqref{eq:GS-f} and \eqref{eq:GS-full-riccati-compact}. Then the Riccati--Volterra equation \eqref{eq:GS-full-riccati-compact} has a unique global solution
		\begin{equation*}
		\psi \in L^2([0,T],\bS_2(\C)),\qquad \re\psi(t)\succeq0,\quad t \in [0,T].
		\end{equation*}
		Moreover, the exponential-affine Fourier--Laplace transform formula \eqref{eq:GS-final-transform} holds on $[0,T]$.
	\end{itemize}
\end{theorem}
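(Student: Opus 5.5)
For part (i) I will first construct $V$ and then build $(\log S,\delta)$ on top of it. By Theorem~\ref{thm:weak-solutions}, applied with $d=2$, $b_0=b$ and $B(x)=Mx+xM^\top$, there is a continuous $\bS_2^+$-valued weak solution $V$, together with a $2\times2$ Brownian motion $W$. The hypotheses hold: $B$ satisfies the admissibility condition, since for $x,u\in\bS_2^+$ with $\Tr(xu)=0$ we have $xu=0$, hence $\Tr(Mxu+xM^\top u)=0$. The Hölder regularity follows from Theorem~\ref{thm:weak-exsitence-of-R-solutions}. Enlarging the probability space by an independent $\widetilde W$, I set $Z=W\rho+\sqrt{1-\rho^\top\rho}\,\widetilde W$, which is a Brownian motion. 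Then $\delta$ is defined explicitly by \eqref{eq:GS-delta-solution} and $\log S$ by \eqref{eq:GS-log-price}. The stochastic integrals are well defined because $\int_0^T\Tr(V_s)\,ds<\infty$ by continuity. This gives the weak solution.

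For part (ii) I will prove existence of $\psi$ with values in $\bS_2^+$ via a Volterra comparison argument, in the style of Abi Jaber et al. First, local existence of a continuous solution comes from local Lipschitzness of the quadratic nonlinearity in the time-reversed Volterra equation. Nonnegativity of $\psi$ should come from an invariance argument for deterministic Volterra equations on $\bS_2^+$. The inputs are: $k$ preserves nonnegativity; $f_L\succeq0$ and $u_3\succeq0$; and at a boundary point $\psi$ with $v\in\ker\psi$, both $v^\top(M^\top\psi+\psi M)v=0$ and $v^\top\psi\alpha\psi v=0$, so the drift is inward-pointing. Here I would use the resolvent conditions of Assumption~\ref{K-regularity-in-resolvent} to handle the shifted kernels $\Delta_hk$, as in the scalar theory.

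To get a global solution I need an a priori bound. Since $-2\psi\alpha\psi\preceq0$, $\psi$ is dominated by the solution $\bar\psi$ of the linear equation without the quadratic term. Bounding $\Tr\psi\le\Tr\bar\psi$, with $\bar\psi$ bounded by a Gronwall argument for the linear Volterra equation, rules out blow-up. Uniqueness then follows from local Lipschitzness. For the transform, I apply Theorem~\ref{thm:main-thm} with $u=u_3$ and $f=f_L$. Since $\psi\succeq0$ and $V\succeq0$, the quantity $\Tr(\psi\alpha\psi V)$ is nonnegative, so $Y_t$ is real. The local martingale $\exp(-Y)$ is positive, hence a supermartingale. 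Because $Y_T=\Tr(u_3V_T+\int f_LV)\ge0$ is bounded below, it remains to show that the expectation is constant. I would obtain this with a localization/Novikov-type argument, using the linear diffusion of $V$ and the moment bounds of Lemma~\ref{lem:moment_bound}. Alternatively, I would use the standard Volterra trick: stop at $\tau_n$ where $\|V\|\le n$, and show uniform integrability of $\exp(-Y_{\tau_n})$, which is immediate if $Y$ is bounded below on $[0,T]$ by a deterministic constant, as follows from $\Tr(\psi b_0)\ge0$ and representation \eqref{eq:Y-tilde}.

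Part (iii) is the main obstacle, because the equation is now complex-valued with a time-dependent $M^\chi$. My plan is to show that any local solution satisfies $\re\psi\succeq0$. Writing $\psi=a+ib$, the real part solves a Volterra equation of the form
\begin{equation*}
a(t)=\re u_3\,k(T-t)+\int_t^T k(s-t)\bigl(\re f+\widetilde M(s)^\top a+a\widetilde M(s)-2a\alpha a+2b\alpha b\bigr)\,ds,
\end{equation*}
where $\widetilde M(s)=\re M^\chi(s)$ and the imaginary part of $M^\chi$ also contributes terms in $b$. The key observation is that $\re f\succeq0$ and $2b\alpha b\succeq0$ act as nonnegative sources, so the invariance argument from (ii) yields $a\succeq0$. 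For the bound, I compare $\|\psi\|$ with a linear equation. This is where $Q\in\GL_2$ is used: it allows the cross terms $\chi^\top\rho^\top Q\psi$ to be controlled against $\psi\alpha\psi$ via $\alpha=Q^\top Q\succ0$. The assumption $k\in C^2$ allows me to differentiate the Volterra equation and obtain an ODE-type energy estimate for $\Tr(\bar\psi\alpha\psi)$. Once a global solution exists, the true martingale property of $M$ follows because $|M_t|=\exp(\re(\cdots))$. Indeed, the real part of the exponent equals $-\re Y_t+\dots$, and by $\re\psi\succeq0$ and $\re f\succeq0$ this is bounded above by $|e^{C_T}|^{-1}$ times a constant. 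Hence $M$ is a bounded local martingale, and therefore a true martingale, which gives \eqref{eq:GS-final-transform}.
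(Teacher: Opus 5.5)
Part (i) and your treatment of the real Riccati equation in (ii) are fine: cone invariance, then comparison with the linear equation obtained by dropping $-2\psi\alpha\psi$. There is, however, a genuine gap in the true-martingale step of both (ii) and (iii).

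A deterministic lower bound on $Y$ does not follow from \eqref{eq:Y-tilde}. There the term $-2\int_t^T\Tr\bigl(E[X_s\mid\cF_t]\psi(s)\alpha\psi(s)\bigr)\,ds$ is nonpositive and not bounded below. Novikov would need exponential moments of $\int_0^T\Tr(V_s)\,ds$, which Lemma~\ref{lem:moment_bound} does not give. The localization alternative would need an actual change-of-measure argument with bounds uniform in the localization, which you do not supply. In (iii) you rely only on $|M_t|\le1$, but nothing in your sketch shows $\re Y^{i,H}_t\ge\frac{1-\rho^\top\rho}{2}\int_0^t\eta(s)V_s\eta(s)^\top\,ds$.

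The missing idea is the representation of $Y$ that is affine in the past path: Theorem~\ref{thm:affine-on-past-path}(ii), in the matrix form of Remark~\ref{rem:affine-path-scalar-kernel},
\begin{equation*}
Y_t=\Tr\Bigl(-g_t(t)V_0+L(\{0\})\psi(t)V_t+\int_{[0,t]}V_{t-s}\,dg_t(s)+\int_0^t f(s)V_s\,ds+\int_t^T\psi(s)b_0\,ds\Bigr),
\end{equation*}
with $g_t(r)=-\int_{(r,T-t+r]}\psi(t-r+s)\,L(ds)$ and $f=f_L$, respectively $f=f_0$. Since $L\ge0$ and $s\mapsto L([s,s+t])$ is nonincreasing (Assumption~\ref{K-regularity-in-resolvent}), $\re\psi\succeq0$ gives $-\re g_t(t)\succeq0$ and makes $d(\re g_t)$ a psd-valued measure. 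Hence the first three terms and the $b_0$-term have nonnegative real part. This yields $Y^L\ge0$, so $0<e^{-Y^L}\le1$, and $\re Y^{i,H}_t\ge\int_0^t\Tr(\re f_0(s)V_s)\,ds$, so $|M_t|\le1$. This is exactly what Assumption~\ref{K-regularity-in-resolvent} is for. You spend it on the Riccati invariance instead, which the paper derives from the nonnegativity-preserving property of $k$ via the splitting argument of Lemma~\ref{lem:volterra-cone-invariance}.

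In (iii) your invariance step for $a=\re\psi$ (writing $\psi=a+ib$ as you do) is also incomplete. With $M^\chi=M+iC$ and $C=Q^\top\rho\eta$, the real part of the Riccati map contains the cross term $-C^\top b-bC$, which is indefinite. So $\re f\succeq0$ and $2b\alpha b\succeq0$ do not by themselves make the field inward-pointing at points where $\Tr(aZ)=0$. You have to complete the square:
\begin{equation*}
\re f-C^\top b-bC+2b\alpha b=2\Bigl(Qb-\frac12\rho\eta\Bigr)^\top\Bigl(Qb-\frac12\rho\eta\Bigr)+\frac12\bigl(1-\rho^\top\rho\bigr)\eta^\top\eta\succeq0 .
\end{equation*}
This uses the exact matching $\re f=\frac12\eta^\top\eta$, $\alpha=Q^\top Q$ and $|\rho|\le1$.

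For non-explosion, no order comparison is available for the complex equation, and your reason for needing $Q\in\GL_2$ is misplaced. The cross terms are linear with bounded coefficients and harmless; what must be controlled is the quadratic term. After the congruence $\Psi=Q\widetilde\psi Q^\top=A+iB$ it becomes $-2\Psi^2$. It then contributes $-2\Tr(A^2)+2\|B\|^2$ to $(\Tr A)'$ and $-8\Tr(AB^2)\le0$ to $(\|B\|^2)'$, precisely because $A\succeq0$.

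Moreover, $k\in C^2$ has to enter through the resolvent $L=\ell_0\delta_0+\ell\,ds$ with $\ell\in C^1$ (Lemma~\ref{lem:regular-first-kind-resolvent}). After differentiation the memory term is then the linear $\ell'\ast\Psi$. Differentiating the equation in $k$ directly instead leaves $\int_0^tk'(t-s)R(s,\widetilde\psi(s))\,ds$, which is quadratic in $\widetilde\psi$ and does not close under Gronwall.
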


\begin{proof}
	Part (i) follows from Theorem~\ref{thm:weak-solutions}. Once $V$ and the driving matrix Brownian motion $W$ have been constructed, the equations for $(\log S,\delta)$ are linear stochastic differential equations and therefore define a continuous weak solution of the complete system.

For (iii), the global existence and the property $\re\psi \succeq 0$ are shown in Lemma~\ref{lem:GS-exsitence-riccati} below. We may consequently apply Proposition~\ref{prop:additional-brownian-functional}. In particular, $\exp\bigl(iN-Y^{i,H}\bigr)$ is a local martingale.
Define
\begin{equation*}
\cE_t^\perp := \exp\left(\sqrt{1-\rho^\top\rho} \int_{0}^{t} \chi(s)\sqrt{V_s}\,d\widetilde{W}_s -\frac{\bigl(1-\rho^\top\rho\bigr)}{2} \int_{0}^{t} \chi(s)V_s\chi(s)^\top\,ds\right).
\end{equation*}
The process $\sqrt{1-\rho^\top\rho} \int_{0}^{\cdot}  \chi(s)\sqrt{V_s}\,d\widetilde{W}_s$ is a continuous complex local martingale with quadratic variation
\begin{equation*}
\bigl(1-\rho^\top\rho\bigr) \int_{0}^{t} \chi(s)V_s\chi(s)^\top\,ds,
\end{equation*}
so $\cE^\perp$ is its stochastic exponential and hence a local martingale. The local martingale part of $\exp\bigl(iN-Y^{i,H}\bigr)$ is driven by $W$, whereas $\cE^\perp$ is driven by $\widetilde{W}$. Since $W$ and $\widetilde{W}$ are independent Brownian motions, their quadratic covariation vanishes. Consequently, the local martingale parts of $\exp(iN-Y^{i,H})$ and $\cE^\perp$ have zero quadratic covariation.
It\^o's product formula therefore implies that their product
\begin{equation*}
M_t := \exp\bigl(iN_t-Y_t^{i,H}\bigr)\cE_t^\perp
=\exp\Biggl( \int_{0}^{t}\chi(s)\sqrt{V_s}\,dZ_s -\frac{1-\rho^\top\rho}{2} \int_{0}^{t}\chi(s)V_s\chi(s)^\top\,ds -Y_t^{i,H}\Biggr)
\end{equation*}
is again a local martingale. It remains to verify that $M$ is also a true martingale. As in Theorem~\ref{thm:affine-on-past-path}(ii),
	\begin{align*}
		\begin{split}
			Y_t^{i,H} & = -g_t(t)\vc(V_0) + \vc(\psi(t)^\cplxtrans)^\cplxtrans L(\{0\})\vc(V_t) + \int_{[0,t]} (dg_t(s))\vc(V_{t-s}) \\
			    & \quad + \Tr \biggl( \int_{0}^{t} f_0(s)V_s \, ds + \int_{t}^{T} \psi(s)b_0 \, ds \biggr), \quad t \in [0,T].
		\end{split}
	\end{align*}
	Taking real parts, the sign properties of the first three terms follow
from $\re\psi\succeq0$, the nonnegativity of $L$, and
Assumption~\ref{K-regularity-in-resolvent}, by the same argument as in the proof of 
\cite[Proposition 4.3]{ackermann_Inhomogeneous_2022}.
Consequently,
\begin{equation*}
	-\re g_t(t)\vc(V_0) \geq 0,\quad \re\bigl(\vc(\psi(t)^\cplxtrans)^\cplxtrans L(\{0\})\vc(V_t)\bigr) \geq 0, \quad \int_{[0,t]} d(\re g_t(s))\vc(V_{t-s}) \geq 0.
\end{equation*}
Moreover, $b_0 \succeq 0$, $V_s \succeq 0$, $\re\psi(s) \succeq 0$, and by \eqref{eq:GS-f0},
\begin{equation*}
\re f_0(s) = \frac{1-\rho^\top\rho}{2}\eta(s)^\top\eta(s)\succeq0.
\end{equation*}
Consequently,
\begin{equation}\label{eq:GS-Y-lower-bound}
\re Y_t^{i,H} \geq \int_{0}^{t} \Tr\bigl(\re f_0(s)V_s\bigr)\,ds = \frac{1-\rho^\top\rho}{2} \int_{0}^{t} \eta(s)V_s\eta(s)^\top\,ds.
\end{equation}
Since $\chi = i\eta$, we have
$$\chi(s)V_s\chi(s)^\top = -\eta(s)V_s\eta(s)^\top,$$
and both stochastic integrals in the exponent of $M_t$ are purely imaginary. Hence, \eqref{eq:GS-Y-lower-bound} gives
\begin{align*}
\log|M_t| &= -\re Y_t^{i,H} - \frac{1-\rho^\top\rho}{2} \int_{0}^{t}\chi(s)V_s\chi(s)^\top\,ds\\
&= -\re Y_t^{i,H} + \frac{1-\rho^\top\rho}{2} \int_{0}^{t}\eta(s)V_s\eta(s)^\top\,ds \leq 0.
\end{align*}
Thus, $|M_t| \leq 1$ for all $t \in [0,T]$, and $M$ is a uniformly integrable martingale.
Finally, Proposition~\ref{prop:additional-brownian-functional} gives
$$Y_T^{i,H} = \Tr\biggl(u_3V_T+ \int_{0}^{t} f_0(s)V_s\,ds\biggr).$$
Using \eqref{eq:GS-linear-combination} and \eqref{eq:GS-f0}, we therefore obtain
$$\exp(C_T)M_T = \exp\bigl(u_1\log S_T+u_2\delta_T-\Tr(u_3V_T)\bigr).$$
Since $C_T$ is deterministic, the martingale property of $M$ yields
$$\Phi_t(u_1,u_2,u_3) = \exp(C_T)M_t,$$
which is precisely \eqref{eq:GS-final-transform}.

	For (ii), Lemma~\ref{lem:real-global-riccati} below gives the unique global solution with $\psi(t) \succeq 0$. We may therefore apply Theorem~\ref{thm:main-thm}. It remains to verify that the local martingale $\exp(-Y^L)$ is a true martingale. The path representation in Theorem~\ref{thm:affine-on-past-path}(ii), together with $\psi \succeq 0$, $f_L \succeq 0$, $V \succeq 0$, $b_0 \succeq 0$, the nonnegative resolvent $L$, and Assumption~\ref{K-regularity-in-resolvent}, gives
	$$Y_t^L \geq 0, \quad t \in [0,T],$$
	by exactly the same sign argument used in part (iii) for the real part of $Y^{i,H}$. Hence, $0 < \exp(-Y_t^L) \leq 1$, so the local martingale is uniformly integrable and \eqref{eq:GS-Laplace-transform} follows.
\end{proof}

\begin{corollary}\label{cor:GS-uniqueness}
    Under the assumptions of Theorem~\ref{thm:GS-model-affine} and the additional assumption that $Q \in \GL_2$ and $k \in C^2([0,T])$, any two continuous weak solutions of \eqref{eq:GS-model}--\eqref{eq:GS-volterra-wishart} have the same law.
\end{corollary}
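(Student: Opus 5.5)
The plan is the standard argument that an exponential-affine transform determines the finite-dimensional distributions. Let $(\log S,\delta,V)$ and $(\log S',\delta',V')$ be two continuous weak solutions, possibly on different filtered probability spaces. By Theorem~\ref{thm:GS-model-affine}(iii), applied on each space, the conditional transform $\Phi_t(u_1,u_2,u_3)$ is given by \eqref{eq:GS-final-transform} for all $u_1,u_2\in i\R$ and $u_3\in\bS_2^+ + i\bS_2$. The solution $\psi$ of \eqref{eq:GS-full-riccati-compact} is unique and deterministic, and depends only on the model parameters and $(u_1,u_2,u_3,T)$, not on the particular weak solution. Taking $t=0$ gives
\[
E\bigl[\exp(u_1\log S_T+u_2\delta_T-\Tr(u_3V_T))\bigr]=\exp(C_T-Y_0^{i,H}),
\]
and the right-hand side depends only on deterministic data. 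With $u_3\in i\bS_2$ this is the characteristic function of $(\log S_T,\delta_T,V_T)$, so the one-dimensional marginals agree for every horizon $T$.

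To pass to finite-dimensional distributions at times $0\le t_1<\dots<t_n\le T$, I would proceed by induction on $n$ via conditioning. The cleanest route is to generalize the terminal functional: I want
\[
E\Bigl[\exp\Bigl(\sum_{j}\bigl(u_1^j\log S_{t_j}+u_2^j\delta_{t_j}-\Tr(u_3^jV_{t_j})\bigr)\Bigr)\Bigr]
\]
to again be expressible through deterministic Riccati--Volterra solutions. The linear part in $(\log S,\delta)$ can be handled exactly as in \eqref{eq:GS-linear-combination}: the integrand $\chi$ simply becomes a piecewise-defined deterministic function, since $\sum_j u_1^j\log S_{t_j}+u_2^j\delta_{t_j}$ is still a constant plus $\int_0^T\chi(s)\sqrt{V_s}\,dZ_s$ minus a $V_{11}$-integral. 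The $V_{t_j}$ terms are the delicate ones. For them I would replace $u_3K(T-t)$ in \eqref{eq:GS-full-riccati-compact} by $\sum_j u_3^jK(t_j-t)\Ind_{t<t_j}$, equivalently by a measure-valued $f$, and redo Theorem~\ref{thm:main-thm} and Proposition~\ref{prop:additional-brownian-functional} with this inhomogeneous forcing. The Fubini arguments in those proofs only use linearity in the forcing, so they carry over. The bound $|M_t|\le1$ from the proof of Theorem~\ref{thm:GS-model-affine}(iii) carries over as well, since the sign argument relies only on $\re\psi\succeq0$ and the resolvent properties in Assumption~\ref{K-regularity-in-resolvent}.

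The main obstacle is global existence, uniqueness and $\re\psi\succeq0$ for this modified Riccati--Volterra equation with a forcing that jumps at the $t_j$. I would handle it by backward induction over the intervals $[t_{j-1},t_j]$, invoking Lemma~\ref{lem:GS-exsitence-riccati} (which uses $Q\in\GL_2$ and $k\in C^2$) on each piece with the inherited terminal data. If that fails, the fallback is the alternative that the hypotheses of the corollary suggest. One uses $Q\in\GL_2$ to recover $W$ from $V$, and $k\in C^2$ together with the resolvent to invert the Volterra equation, so that the law of $(V,W,\widetilde W)$ is determined by the law of $V$. The law of $V$ is then characterized through the Laplace functionals $E[\exp(-\int\Tr(f_LV)\,ds)]$ of part (ii), for arbitrary nonnegative $f_L$, which determine the law of a continuous nonnegative process. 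The law of $(\log S,\delta)$ then follows, since these are explicit stochastic integrals driven by $V$, $W$ and $\widetilde W$. Equality of finite-dimensional distributions plus path continuity then gives equality of laws on $C([0,T])$.
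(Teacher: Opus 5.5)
Your overall plan and your handling of the $(\log S,\delta)$ part via a piecewise $\chi$ agree with the paper: you identify the joint Fourier--Laplace transform at finitely many times through deterministic Riccati data, then use path continuity. However, the step you flag as the main obstacle does not go through as proposed. Backward induction over $[t_{j-1},t_j]$ is an ODE argument and fails for a Volterra equation. Write $R(s,x)=f(s)+M^\chi(s)^\top x+xM^\chi(s)-2x\alpha x$. On $[t_{j-1},t_j)$ your equation reads
\begin{equation*}
\psi(t)=u_3^{j}k(t_j-t)+\Bigl[\sum_{i>j}u_3^{i}k(t_i-t)+\int_{t_j}^{T}k(s-t)R(s,\psi(s))\,ds\Bigr]+\int_{t}^{t_j}k(s-t)R(s,\psi(s))\,ds .
\end{equation*}
The bracketed inherited term is a general function of $t$, not $u\,k(t_j-t)$ for a single matrix $u$. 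That reduction only happens in special cases such as exponential kernels, where the problem is effectively Markovian. So Lemma~\ref{lem:GS-exsitence-riccati} cannot be invoked piecewise. Its invariance step rests on Lemma~\ref{lem:volterra-cone-invariance}, which requires forcing of the form $k(t)u$ with $u$ in the cone. Its non-explosion estimate starts from \eqref{eq:complex-riccati-resolvent-form}, whose only non-integral forcing is the single constant $u_3$. To make your route work you would have to redo the splitting/invariance argument and the Gronwall estimate on all of $[0,T]$ with the jumps $u_3^j$ inserted. You would also have to redo Theorem~\ref{thm:main-thm}, Proposition~\ref{prop:additional-brownian-functional}, and the path representation behind $|M_t|\le 1$ with measure-valued forcing. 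None of this is supplied.

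Your fallback does not close the gap either. Inverting the convolution only recovers the $\bS_2$-valued martingale $\int(\sqrt{V}\,dW\,Q+Q^\top dW^\top\sqrt{V})$. The map $A\mapsto\sqrt{V}AQ+Q^\top A^\top\sqrt{V}$ sends the four-dimensional space $\R^{2\times 2}$ into the three-dimensional space $\bS_2$, so it always has a kernel: for invertible $V$, every $A=V^{-1/2}SQ^{-1}$ with $S$ antisymmetric. It degenerates further on $\partial\bS_2^+$. Hence $W$, and in particular $W\rho$, which couples $Z$ to $V$, is not a functional of $V$, and the law of $V$ does not determine the law of $(\log S,\delta,V)$.

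The idea you are missing is the one the paper uses to avoid jump forcing altogether. By continuity of $V$, choose $f_N\in C([0,T],\bS_2^+)$ with $\int_0^T\Tr(f_N(s)V_s)\,ds\to\sum_j\Tr(U_jV_{t_j})$ a.s., where $U_j\in\bS_2^+$. For each $N$ one then has $u_3=0$ and an extra continuous positive semidefinite forcing. The argument of Lemma~\ref{lem:GS-exsitence-riccati} applies to this unchanged, since the additional term $\langle f_N,Z\rangle\ge0$ only helps the inward condition. The real part of the exponent is $-\int_0^T\Tr(f_N(s)V_s)\,ds\le0$, so dominated convergence yields the joint transform at $t_1,\dots,t_n$.
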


\begin{proof}
    The argument follows the standard uniqueness argument for affine
    Volterra processes; see \cite[Theorem 6.1]{abijaber_Affine_2019} and \cite[Corollary 4.4]{ackermann_Inhomogeneous_2022}.
    Let $(\log S,\delta,V)$ be a continuous weak solution and fix
    $n \in \N$, $0 \leq t_1 < \cdots < t_n \leq T$, together with $u_{1,j},u_{2,j} \in i\R$ and
    $U_j \in \bS_2^+$, $j = 1, \ldots, n$.
    As in the proof of \cite[Corollary 4.4]{ackermann_Inhomogeneous_2022},
    by continuity of $V$ there exists a sequence
    $f_N \in C([0,T],\bS_2^+)$ such that
    \begin{equation*}
        \int_{0}^{T} \Tr(f_N(s)V_s) \, ds \rightarrow \sum_{j = 1}^{n} \Tr(U_jV_{t_j}) \quad \text{ a.s.}
    \end{equation*}
    Moreover, by linearity of \eqref{eq:GS-model},
    \begin{equation*}
        \sum_{j = 1}^{n} \bigl(u_{1,j} \log S_{t_j} + u_{2,j} \delta_{t_j} \bigr)
    \end{equation*}
    admits the same representation as in \eqref{eq:GS-linear-combination}, with deterministic
    piecewise defined coefficients.
    For every $N \in \N$, the corresponding Riccati--Volterra equation admits a unique global solution by the same argument as in Lemma~\ref{lem:GS-exsitence-riccati}, and the associated local martingale is a true martingale by the argument of Theorem~\ref{thm:GS-model-affine}. Hence, the corresponding Fourier--Laplace transform
    $$E\biggl[ \exp\biggl( \sum_{j = 1}^{n} \bigl(u_{1,j} \log S_{t_j} + u_{2,j} \delta_{t_j} \bigr) - \int_{0}^{T} \Tr(f_N(s)V_s)\,ds \biggr) \biggr]$$
    is uniquely determined by the model parameters. Letting $N \to \infty$ and using dominated convergence, we conclude
    that
    $$E\biggl[ \exp\biggl(\sum_{j = 1}^{n} \bigl( u_{1,j}\log S_{t_j} + u_{2,j} \delta_{t_j} - \Tr(U_j V_{t_j}) \bigr) \biggr) \biggr]$$
    is uniquely determined. Thus, any two continuous weak solutions
    have the same finite-dimensional distributions. Since their paths
    are continuous, their laws on
    $C([0,T],\R^2\times\bS_2^+)$ coincide.
\end{proof}

\begin{lemma}\label{lem:regular-first-kind-resolvent}
	Let $T < \infty$ and let $k \in C^2([0,T])$ satisfy $k(0) > 0$. Then its resolvent of the first kind has, on $[0,T]$, the form
	\begin{equation*}
	L(ds) = \ell_0\delta_0(ds) + \ell(s) \, ds, \quad \ell_0 = \frac{1}{k(0)},
	\end{equation*}
	for a function $\ell \in C^1([0,T])$.
\end{lemma}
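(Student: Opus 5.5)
We reduce the equation $k \ast L = \Id$ to a Volterra equation of the second kind by making the ansatz $L(ds) = \ell_0\delta_0(ds) + \ell(s)\,ds$. Then
\begin{equation*}
(k\ast L)(t) = \ell_0 k(t) + \int_0^t k(t-s)\ell(s)\,ds = 1, \qquad t\in[0,T].
\end{equation*}
Evaluating at $t=0$ forces $\ell_0 = 1/k(0)$. Differentiating in $t$, which is legitimate since $k\in C^1$ and $\ell$ is to be continuous, gives
\begin{equation*}
k(0)\ell(t) + \int_0^t k'(t-s)\ell(s)\,ds = -\ell_0 k'(t).
\end{equation*}
Dividing by $k(0)>0$ yields the equation of the second kind
\begin{equation*}
\ell(t) = -\frac{k'(t)}{k(0)^2} - \int_0^t \frac{k'(t-s)}{k(0)}\,\ell(s)\,ds .
\end{equation*}
Its kernel $k'/k(0)$ is continuous on $[0,T]$ and its forcing term $-k'/k(0)^2$ is continuous. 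The standard theory of linear Volterra equations of the second kind with continuous kernel (resolvent of the second kind, e.g.\ \cite{gripenberg_Volterra_1990}, or Picard iteration with the bound $\|k'\|_\infty^n t^n/n!$) therefore gives a unique continuous solution $\ell$ on $[0,T]$.

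Next we upgrade the regularity of $\ell$. Write $(k'\ast\ell)(t) = \int_0^t k'(s)\ell(t-s)\,ds$. Because $k\in C^2$, this is $C^1$ with derivative
\begin{equation*}
k'(t)\ell(0) + \int_0^t k''(t-s)\ell(s)\,ds,
\end{equation*}
which is continuous. Hence the right-hand side of the second-kind equation lies in $C^1$, and so $\ell\in C^1([0,T])$. This is the only place where the $C^2$ assumption on $k$ enters.

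Finally we verify that the measure $L$ built this way is the resolvent of the first kind. Reversing the differentiation, $F(t) := \ell_0 k(t) + (k\ast \ell)(t)$ satisfies $F'(t) = \ell_0k'(t) + k(0)\ell(t) + (k'\ast\ell)(t) = 0$ and $F(0)=\ell_0k(0)=1$, so $k\ast L\equiv 1$ on $[0,T]$. Since $k$ and $L$ are scalar, convolution is commutative, so $L\ast k = k\ast L = 1$ as well. Moreover $L$ has bounded variation on $[0,T]$, with total mass at most $\ell_0+\|\ell\|_{L^1}$.

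The main point needing care is uniqueness: the resolvent constructed here must be the resolvent, not merely one of several candidates. Suppose $L'$ is another bounded-variation measure with $k\ast L' = 1$. Then $k\ast(L-L')=0$, and convolving with $L$ gives $(L\ast k)\ast(L-L') = 1\ast(L-L') = 0$. Thus the distribution function of $L-L'$ vanishes identically on $[0,T]$, so $L=L'$. The remaining verifications are routine.
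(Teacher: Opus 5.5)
Your proof is correct and follows the paper's argument essentially step for step: the second-kind equation for $\ell$ with kernel $k'/k(0)$, $C^1$-regularity by differentiating the convolution, and the check that $k\ast L$ has zero derivative and equals $1$ at $t=0$, followed by commutativity. The one difference is that you prove uniqueness of the resolvent directly via associativity, $1\ast(L-L') = (L\ast k)\ast(L-L') = L\ast\bigl(k\ast(L-L')\bigr)=0$, whereas the paper cites \cite[Theorem 5.5.2]{gripenberg_Volterra_1990}.

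There is one small slip. The derivative of $\int_0^t k'(t-s)\ell(s)\,ds$ is $k'(0)\ell(t)+\int_0^t k''(t-s)\ell(s)\,ds$, not $k'(t)\ell(0)+\int_0^t k''(t-s)\ell(s)\,ds$. The corrected expression is still continuous, so $\ell\in C^1([0,T])$ still holds.
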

\begin{proof}
    Consider the linear Volterra equation of the second kind
    \begin{equation}\label{eq:resolvent-density-equation}
        \ell(t) = - \frac{k'(t)}{k(0)^2} - \frac{1}{k(0)} \int_{0}^{t} k'(t-s)\ell(s)\,ds.
    \end{equation}
    Since $k' \in C^1([0,T])$, standard linear Volterra theory yields a unique continuous solution $\ell$, (see, for instance \cite[Theorem 2.3.5]{gripenberg_Volterra_1990}).
    Moreover, differentiating the convolution term in \eqref{eq:resolvent-density-equation} shows that $\ell \in C^1([0,T])$.
    Now define the measure
	$$ \widehat{L}(ds) := \frac{1}{k(0)}\delta_0(ds) + \ell(s)ds.$$
	We claim that $\widehat{L}$ is the resolvent of the first kind of $k$.
    Notice first that
    $$(k \ast \widehat{L})(t) = \frac{k(t)}{k(0)} + \int_{0}^{t} k(t-s)\ell(s)\,ds.$$
    Differentiating and using \eqref{eq:resolvent-density-equation} gives
    $$\frac{d}{dt}(k \ast \widehat{L})(t) = \frac{k'(t)}{k(0)} + k(0) \ell(t) + \int_{0}^{t} k'(t-s)\ell(s)\,ds = 0.$$
	By \eqref{eq:resolvent-density-equation}, the right-hand side is zero. Hence, $k \ast \widehat{L}$ is constant on $[0,T]$. For $t = 0$ we have $(k \ast \widehat{L})(0) = \frac{k(0)}{k(0)} = 1$ and therefore $k \ast \widehat{L} = 1$. Since the kernel and the measure are scalar-valued, convolution is commutative, and thus also $\widehat{L} \ast k \equiv 1$. Consequently, $\widehat{L}$ is a resolvent of the first kind of $k$.
	By uniqueness \cite[Theorem 5.5.2]{gripenberg_Volterra_1990} therefore yields $L = \widehat{L}$.
\end{proof}

\begin{lemma}\label{lem:GS-exsitence-riccati}
	Fix $T < \infty$ and assume $K = k\Id_2$ with $k$ satisfying Assumptions \ref{K-regularity-condition} and \ref{ass:k-PSD-conditions}. Assume in addition that $Q \in \GL_2$ and $k \in C^2([0,T])$. Let $u_1,u_2 \in i\R$, $u_3 \in \bS_2^+ + i\bS_2$, and define $\chi$ and $f$ by \eqref{eq:GS-chi} and \eqref{eq:GS-f}. Then the Riccati--Volterra equation \eqref{eq:GS-full-riccati-compact} has a unique global solution $\psi \in L^2([0,T],\bS_2(\C))$. Moreover,
	$$\re\psi(t) \succeq 0, \quad t \in [0,T]$$.
\end{lemma}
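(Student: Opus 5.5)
Since $k\in C^2([0,T])$ with $k(0)>0$, I will use Lemma~\ref{lem:regular-first-kind-resolvent} to convert the Riccati--Volterra equation \eqref{eq:GS-full-riccati-compact} into a Volterra integro-differential equation. This is the strategy of \cite{abijaber_Affine_2019,ackermann_Inhomogeneous_2022}. Set $\phi(t):=\psi(T-t)$ and $F(s,\psi):=f(s)+M^\chi(s)^\top\psi+\psi M^\chi(s)-2\psi\alpha\psi$, which is locally Lipschitz in $\psi$ and continuous in $s$. Then \eqref{eq:GS-full-riccati-compact} reads $\phi=u_3k+k\ast F(T-\cdot,\phi)$. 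A local solution exists and is unique on a maximal interval $[0,T_{\max})$ by standard Volterra theory (\cite{gripenberg_Volterra_1990}, Lipschitz contraction), and $\phi$ is continuous. It blows up in norm if $T_{\max}<T$. So it suffices to show: (a) $\re\phi\succeq0$ along the solution, and (b) an a priori bound on $\|\phi\|$ on $[0,T_{\max})$.

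For (a) I will transform to a differential form. Convolving with $L=\ell_0\delta_0+\ell\,ds$ and using Lemma~\ref{remark:psi-convolution} (scalar case, Remark~\ref{rem:affine-path-scalar-kernel}) gives $\ell_0\phi(t)+\int_0^t\ell(t-s)\phi(s)\,ds=u_3+\int_0^tF(T-s,\phi(s))\,ds$. This shows $\phi\in C^1$, with
\begin{equation*}
\ell_0\phi'(t)=F(T-t,\phi(t))-\ell(0)\phi(t)-\int_0^t\ell'(t-s)\phi(s)\,ds-\ell(t)u_3\ell_0^{-1}\cdot(\ldots),
\end{equation*}
so it is a matrix Riccati ODE perturbed by a linear memory term. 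I would rather argue directly on the integral form, as in the invariance argument of \cite[Theorem 6.1]{abijaber_Affine_2019}. Write $\phi=\phi^R+i\phi^I$. The real part satisfies a linear Volterra equation
\begin{equation*}
\phi^R=\re u_3\,k+k\ast\bigl(\re f+ (M^\chi)^{\top}\!\!\!\!\!\!\quad\text{-type linear terms in }\phi^R -2\phi^R\alpha\phi^R+2\phi^I\alpha\phi^I\bigr).
\end{equation*}
Here the inhomogeneity $\re f+2\phi^I\alpha\phi^I\succeq0$, since $\re f=\tfrac12\eta^\top\eta\succeq0$ and $\alpha\succeq0$. Note that $M^\chi=M+iQ^\top\rho\eta$, so the imaginary part of $M^\chi$ contributes cross terms $\phi^I$ in the real equation. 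I handle them by rewriting $-2\phi\alpha\phi+\ldots$ via a completed square. The expression $-2(\phi-\tfrac{i}{2}\alpha^{-1}Q^\top\rho\eta)\alpha(\ldots)$ is where $Q\in\GL_2$ is used, since then $\alpha^{-1}$ exists. Completing the square removes the imaginary drift and produces an extra nonnegative real source $\tfrac12\eta^\top\rho^\top\rho\eta$-type term. Positivity of $\phi^R$ then follows from a comparison/invariance result for linear matrix Volterra equations $x=a k+k\ast(g+\mathcal{L}(x))$ with $a,g\succeq0$ and $\mathcal{L}$ quasi-monotone in the sense of \eqref{eq:affine-admissibility}. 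This uses that $k$ preserves nonnegativity (Assumption~\ref{ass:k-PSD-conditions}). It is the deterministic analogue of Theorem~\ref{thm:weak-solutions}, and I would invoke \cite{alfonsi_Nonnegativity_2025} applied with the $\bS_2^+$ cone.

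For (b), I will use that $\re\phi\succeq0$ makes $-2\phi\alpha\phi$ dissipative. Taking the trace against $\overline{\phi}$ gives $\re\Tr(\overline{\phi}\phi\alpha\phi)\ge0$ for $\re\phi\succeq0$, which fails in general. So instead I bound $\phi^R$ from above via comparison: $\phi^R\preceq$ the solution of the linear equation without the quadratic term, which is bounded by Gronwall. Then $\phi^I$ satisfies a linear Volterra equation with coefficients depending on the now-bounded $\phi^R$, so Gronwall for Volterra equations bounds $\phi^I$. These bounds exclude blow-up, so $T_{\max}=T$. Uniqueness in $L^2$ follows from local Lipschitz continuity and Gronwall.

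The main obstacle is (a). The complex drift $M^\chi$ couples real and imaginary parts, and the available invariance results are for real cones. My first attempt is the square-completion using $Q\in\GL_2$. If that fails, I will use the $C^2$ regularity of $k$ to pass to the differentiated system and apply a discrete-time nonnegativity-preserving argument.
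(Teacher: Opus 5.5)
Your step (a) works and matches the paper's invariance argument; the genuine gap is step (b), which is circular. Write $\phi=P+iY$ and $C=Q^\top\rho\eta$. The real part of the Riccati map is $\re f+M^\top P+PM-C^\top Y-YC-2P\alpha P+2Y\alpha Y$.

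You cannot drop the whole quadratic term, because its real part $-2P\alpha P+2Y\alpha Y$ is not $\preceq 0$. If you drop only $-2P\alpha P$, the comparison gives $P\preceq\Lambda$. But $\Lambda$ is then driven by the forcing $\re f-C^\top Y-YC+2Y\alpha Y$, which is quadratic in the not-yet-controlled $Y$.

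Meanwhile the equation for $Y$ is linear in $Y$, with coefficients involving $P$ through $C^\top P+PC-2(P\alpha Y+Y\alpha P)$. So bounding $P$ first needs a bound on $Y$, and vice versa. Combining the two Volterra--Gronwall estimates only gives $\|P\|\le c(1+\int\|Y\|^2)$ and $\|Y\|\le c(1+\int(1+\|P\|)(1+\|Y\|))$. This superlinear coupled system does not by itself exclude blow-up. The comparison trick of Lemma~\ref{lem:real-global-riccati} needs the quadratic term to have a sign, and that is exactly what is lost for complex arguments.

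The missing ingredient is a joint energy estimate that exploits a cancellation, and this is where both extra hypotheses enter. The paper uses Lemma~\ref{lem:regular-first-kind-resolvent} (this needs $k\in C^2$) to pass to the differentiated form $\ell_0\widetilde\psi'=R(t,\widetilde\psi)-\ell(0)\widetilde\psi-\ell'\ast\widetilde\psi$. It then normalizes by $\Psi=Q\widetilde\psi Q^\top=A+iB$, which needs $Q\in\GL_2$, so that the quadratic term becomes $-2\Psi^2$. Finally it controls $E=\Tr A+\|B\|^2$:

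\begin{itemize}
\item The unfavorable term $+2B^2$ in the $A$-equation contributes only the linear term $2\|B\|^2$ after taking the trace.
\item The cubic term in the equation for $\|B\|^2$ is $-8\Tr(AB^2)\le 0$, precisely because $A\succeq 0$ by step (a).
\item The cross term $2\langle B,D^\top A+AD\rangle\le\|A\|^2+c\|B\|^2$ is absorbed by the $-2\|A\|^2$ coming from $\Tr(-2A^2)$.
\end{itemize}

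This yields a closed linear Gronwall inequality for $E$. Then $\|A\|\le\Tr A$ bounds $\Psi$, and hence $\widetilde\psi$. Without the normalization $\alpha=\Id_2$, the cubic term $\langle B,A\alpha B+B\alpha A\rangle$ has no sign.

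Conversely, $Q\in\GL_2$ is not needed in step (a), since $2Y\alpha Y-C^\top Y-YC=2(QY-\frac12\rho\eta)^\top(QY-\frac12\rho\eta)-\frac12(\rho^\top\rho)\eta^\top\eta$. Note also that completing the square produces a negative source, not an extra nonnegative one as you wrote. It is compensated by $\re f=\frac12\eta^\top\eta$ only because $\rho^\top\rho\le 1$.
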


\begin{proof}
	Set $\widetilde{\psi}(t) = \psi(T-t)$, $t \in [0,T]$
    and define the time-dependent generalized Riccati map $R:[0,T] \times \bS_2(\C) \to \bS_2(\C)$
	by
	\begin{equation*}
		R(t,u) := f(T-t) + M^\chi(T-t)^\top u + u M^\chi(T-t) - 2u \alpha u, \quad t \in [0,T],\  u \in \bS_2(\C).
	\end{equation*}
	Then the time-reversed Riccati--Volterra equation
	\eqref{eq:GS-full-riccati-compact} takes the form
	\begin{equation*}
		\widetilde{\psi}(t) = u_3k(t) + \int_{0}^{t} k(t-s) R(s,\widetilde{\psi}(s)) \, ds, \quad t \in [0,T].
	\end{equation*}
    Since $k$ is locally square-integrable, $f$ and $M^\chi$ are continuous on finite intervals, and the map $u \mapsto u\alpha u$ is locally Lipschitz, Theorem~\ref{thm:non-continuable-solutions} gives a unique non-continuable solution $(\widetilde{\psi},\tau_{\max})$.

	We first prove invariance on $\bS_2^{+} + i\bS_2$. Decompose first $\widetilde{\psi} = P + iY$, with $P,Y \in \bS_2$,
	and, since $\chi = i\eta$ with $\eta$ being real-valued, we also write $M^\chi = M + iC$, with $C = Q^\top \rho \eta.$
	The real part of the generalized Riccati map thus equals
	\begin{align*}
	\re R(t,P+iY) &= \re f(T-t) + M^\top P+ PM - C^\top Y - YC\\
	&\quad - 2P\alpha P + 2Y\alpha Y.
	\end{align*}
	Let $Z \in \bS_2^+$ satisfy $\Tr(PZ) = 0$. Since $P,Z \succeq 0$, this in particular implies $PZ = ZP = 0$. Hence, all terms containing $P$ vanish when paired with $Z$. From \eqref{eq:GS-f} and $\chi = i\eta$ we have
	\begin{equation*}
	\re f = \frac{1}{2}\eta^\top\eta.
	\end{equation*}
	Using $\alpha = Q^\top Q$ and the Frobenius inner product, we therefore obtain
	\begin{align*}
	\langle\re R(t,P+iY), Z\rangle &= \frac{1}{2}\|\eta Z^{1/2}\|^2 - 2\langle QYZ^{1/2}, \rho\eta Z^{1/2}\rangle + 2\|QYZ^{1/2}\|^2 \\
	&= 2\|QYZ^{1/2} - \frac{1}{2}\rho\eta Z^{1/2}\|^2 + \frac{1}{2}(1-\rho^\top\rho) \|\eta Z^{1/2}\|^2 \\
	&\geq 0,
	\end{align*}
	since $\rho^\top\rho \leq 1$. Thus, the Riccati map is inward pointing at every boundary point of $\bS_2^{+} + i\bS_2$. Lemma~\ref{lem:volterra-cone-invariance} applied to the closed convex set $\bS_2^{+} + i\bS_2$ yields
	\begin{equation}\label{eq:invariance-complex-riccati}
	P(t) \succeq 0, \quad t < \tau_{\max}.
	\end{equation}

	It remains to exclude finite-time explosion. By Lemma~\ref{lem:regular-first-kind-resolvent}, the scalar resolvent of the first kind has the form
	\begin{equation*}
	L(ds) = \ell_0 \delta_0(ds) + \ell(s) ds, \qquad \ell_0 = k(0)^{-1} > 0, \quad \ell \in C^1([0,T]).
	\end{equation*}
	The time-reversed Riccati equation can be written as
	\begin{equation*}
	\widetilde{\psi}(t) = u_3k(t) + \int_{0}^{t} k(t-s)R(s,\widetilde{\psi}(s)) \, ds.
	\end{equation*}
	Convolving with the resolvent $L$ gives
	\begin{equation}\label{eq:complex-riccati-resolvent-form}
	\ell_0\widetilde{\psi}(t) + (\ell \ast \widetilde{\psi})(t) = u_3 + \int_{0}^{t} R(s,\widetilde{\psi}(s)) \, ds.
	\end{equation}
	Since $\ell \in C^1$ and $\widetilde{\psi}$ is continuous on compact subintervals of $[0,\tau_{\max})$, \eqref{eq:complex-riccati-resolvent-form} implies that $\widetilde{\psi}$ is locally absolutely continuous.
	
	The linear term of the Riccati equation can be controlled by standard estimate, whereas simple estimates of the quadratic term would lead to quadratic growth. We consequently use the invertibility of $Q$ to normalize the quadratic term.
	Set 
	\begin{equation*}
		\Psi(t) := Q\widetilde{\psi}(t)Q^\top = A(t) + iB(t), \quad A(t),B(t) \in \bS_2,
	\end{equation*}
	so that the quadratic term takes the canonical form $-2\Psi^2$. By \eqref{eq:invariance-complex-riccati}, we have $A(t) \succeq 0$. Let 
	\begin{equation*}
	\widehat{M} := (Q^{-1})^\top M Q^\top, \quad \widetilde{\eta}(t) := \eta(T-t), \quad D(t) := \rho\widetilde{\eta}(t) Q^\top,
	\end{equation*}
	and $\widehat{f}(t) := Qf(T-t)Q^\top$. Then
	\begin{equation*}
		Q M^\chi(T-t)^\top \widetilde{\psi}(t)Q^\top + Q \widetilde{\psi}(t) M^\chi(T-t) Q^\top = (\widehat{M} + iD(t))^\top \Psi(t) + \Psi(t)(\widehat{M} + iD(t)).
	\end{equation*}
	Consequently, after multiplying the differentiated resolvent equation from the left by $Q$ and from the right by $Q^\top$, we obtain for almost every $t < \tau_{\max}$,
	\begin{equation*}
		\ell_0 \Psi' = \widehat{f} + \bigl(\widehat{M} + iD\bigr)^\top\Psi + \Psi \bigl(\widehat{M} + iD\bigr) - 2\Psi^2 - \ell(0)\Psi - \bigl(\ell' \ast \Psi\bigr).
	\end{equation*}
	Now, all terms except $-2\Psi^2$ are at most linear in $\Psi$, while the quadratic term can now be treated explicitly.
	Since $(A+iB)^2 = A^2 - B^2 + i\bigl(AB + BA\bigr)$, taking real and imaginary parts gives
	\begin{align}
	\ell_0 A' &= \re\widehat{f} + \widehat{M}^\top A + A\widehat{M}-D^\top B - BD - 2A^2 + 2B^2 - \ell(0) A - (\ell' \ast A),\label{eq:complex-real}\\
	\ell_0 B' &= \im\widehat{f} + \widehat{M}^\top B + B\widehat{M} + D^\top A + AD - 2(AB+BA) - \ell(0)B - (\ell' \ast B).\label{eq:complex-imag}
	\end{align}
	All deterministic coefficients in these equations are bounded on $[0,T]$. Now let
	\begin{equation*}
	p(t) := \Tr(A(t)), \quad q(t) := \|B(t)\|^2, \quad E(t) := p(t) + q(t).
	\end{equation*}
	Since $A \succeq 0$, $p \geq 0$. Taking traces in \eqref{eq:complex-real} and using Young's inequality gives a constant $C_T < \infty$ such that
	\begin{equation}\label{eq:p-bound}
	\ell_0 p'(t) \leq C_T \bigl(1 + p(t) + q(t)\bigr) - 2\|A(t)\|^2 + \bigl(|\ell'| \ast p\bigr)(t).
	\end{equation}
	Indeed, the only terms which are not immediately linear in $p$ and $q$ are $-2\Tr(A^2) = -2\|A\|^2$, while the two terms involving $D$ are bounded in absolute value by $C_T\|B\| \leq C_T(1+q)$.
	Next, taking the Frobenius inner product of \eqref{eq:complex-imag} with $2B$ yields
	\begin{equation*}
	2\langle B, -2(AB+BA)\rangle = -8\Tr(AB^2) \leq 0,
	\end{equation*}
	since $A \succeq 0$ and $B^2 \succeq 0$. Moreover,
	\begin{equation*}
	2\bigl|\langle B,D^\top A+AD \rangle\bigr| \leq \|A\|^2+C_Tq,
	\end{equation*}
	and
	\begin{equation*}
	-2\langle B(t), (\ell' \ast B)(t)\rangle \leq \|\ell'\|_{L^1(0,T)} q(t)+\bigl(|\ell'| \ast q\bigr)(t).
	\end{equation*}
	Consequently, after increasing $C_T$ if necessary,
	\begin{equation}\label{eq:q-bound}
	\ell_0 q'(t) \leq C_T\bigl(1+q(t)\bigr) + \|A(t)\|^2 + \bigl(|\ell'| \ast q\bigr)(t).
	\end{equation}
	Adding \eqref{eq:p-bound} and \eqref{eq:q-bound} and dropping the remaining negative term $-\|A\|^2$ gives
	\begin{equation}\label{eq:combined-complex}
	\ell_0 E'(t) \leq C_T(1+E(t)) + \bigl(|\ell'| \ast E\bigr)(t), \quad t < \tau_{\max}.
	\end{equation}
	Integrating \eqref{eq:combined-complex}, using Fubini's theorem, and observing that
	\begin{equation*}
	\int_{0}^{t} (|\ell'| \ast E)(s) \, ds \leq \|\ell'\|_{L^1(0,T)} \int_{0}^{t} E(s) \, ds,
	\end{equation*}
	we obtain
	\begin{equation*}
	E(t) \leq C_T + C_T \int_{0}^{t} E(s) \, ds, \quad t < \tau_{\max}.
	\end{equation*}
	Gronwall's lemma therefore yields
	\begin{equation*}
	\sup_{t < \tau_{\max}} E(t) < \infty.
	\end{equation*}
	Since $A \succeq 0$, $\|A\| \leq \Tr(A) = p$, and hence both $A$ and $B$ remain bounded. Since $Q$ is invertible, $\widetilde{\psi} = Q^{-1}\Psi\bigl(Q^{-1}\bigr)^\top$ remains bounded as well. Thus, $\widetilde{\psi}$ has finite $L^2$-norm on every bounded interval up to $\tau_{\max}$, contradicting the continuation criterion in Theorem~\ref{thm:non-continuable-solutions} if $\tau_{\max} < T$. Hence, $\tau_{\max} = T$. Uniqueness follows from the same theorem, and reversing time completes the proof.
\end{proof}

\begin{lemma}\label{lem:real-global-riccati}
	Let $K = k\Id_d$ with $k$ satisfying Assumptions \ref{K-regularity-condition} and \ref{ass:k-PSD-conditions}. Let $u \in\bS_d^+$, $f \in L^1([0,T],\bS_d^+)$, $M \in\R^{d \times d}$ and $\alpha \in\bS_d^+$. Then
	\begin{equation*}
	 \psi(t) = uk(T-t) + \int_{t}^{T} k(s-t) \bigl(f(s) + M^\top \psi(s) + \psi(s)M - 2\psi(s)\alpha \psi(s)\bigr)\,ds
	\end{equation*}
	has a unique solution $\psi \in L^2([0,T],\bS_d^+)$.
\end{lemma}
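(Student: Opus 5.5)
We will follow the same scheme as in the proof of Lemma~\ref{lem:GS-exsitence-riccati}, now in the real-valued setting and for general $d$, and without needing invertibility of $Q$ or $C^2$-regularity of $k$. First reverse time: with $\widetilde\psi(t)=\psi(T-t)$ and $R(t,v)=f(T-t)+M^\top v+vM-2v\alpha v$ on $\bS_d$, the equation becomes
\begin{equation*}
\widetilde\psi(t)=u\,k(t)+\int_0^t k(t-s)R\bigl(s,\widetilde\psi(s)\bigr)\,ds .
\end{equation*}
The map $R$ is measurable in $t$, and $R(t,0)=f(T-t)\in L^1$. It is also locally Lipschitz in $v$ with a constant independent of $t$. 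Theorem~\ref{thm:non-continuable-solutions} therefore yields a unique non-continuable solution $(\widetilde\psi,\tau_{\max})$ in $L^2_{\mathrm{loc}}$, and it remains to show that it stays in $\bS_d^+$ and cannot explode before $T$.

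\emph{Invariance.} Take $v\in\bS_d^+$ and $z\in\bS_d^+$ with $\Tr(vz)=0$. Then $vz=zv=0$, so every term of $R$ that contains $v$ as an outer factor vanishes when paired with $z$. This gives $\langle R(t,v),z\rangle=\Tr(f(T-t)z)\ge0$, so $R$ is inward pointing on $\partial\bS_d^+$. We also have $u\in\bS_d^+$, and $k$ is nonnegative, nonincreasing and preserves nonnegativity (Assumption~\ref{ass:k-PSD-conditions}). Lemma~\ref{lem:volterra-cone-invariance} then gives $\widetilde\psi(t)\succeq0$ for $t<\tau_{\max}$.

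\emph{A priori bound.} Since $\widetilde\psi\succeq0$ and $\alpha\succeq0$, the quadratic term is negative semidefinite: $\widetilde\psi\alpha\widetilde\psi\succeq0$. Pairing with $\Id_d$ gives
\begin{equation*}
\Tr R(s,\widetilde\psi)\le \Tr f(T-s)+2\|M\|\,\|\widetilde\psi\|\le \Tr f(T-s)+2\|M\|\Tr\widetilde\psi .
\end{equation*}
This one-sided bound is not enough for an upper estimate on its own, because the Volterra equation is not monotone in $R$ in general. To get one, we use a comparison argument. Let $\bar\psi$ solve the linear equation
\begin{equation*}
\bar\psi=u\,k+k\ast\bigl(f(T-\cdot)+M^\top\bar\psi+\bar\psi M\bigr).
\end{equation*}
It exists globally and lies in $L^2$ by linear Volterra theory, and the invariance argument just given shows $\bar\psi\succeq0$. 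The difference $\Delta=\bar\psi-\widetilde\psi$ solves
\begin{equation*}
\Delta=k\ast\bigl(M^\top\Delta+\Delta M+2\widetilde\psi\alpha\widetilde\psi\bigr).
\end{equation*}
Here the forcing $2\widetilde\psi\alpha\widetilde\psi$ is positive semidefinite and the linear part is inward pointing on $\bS_d^+$, since $\langle M^\top\Delta+\Delta M,z\rangle=0$ whenever $\Delta z=0$. Applying Lemma~\ref{lem:volterra-cone-invariance} once more, with this time-dependent inhomogeneous drift, gives $\Delta\succeq0$. Hence
\begin{equation*}
0\preceq\widetilde\psi\preceq\bar\psi \quad\text{on }[0,\tau_{\max}),
\end{equation*}
so $\|\widetilde\psi\|\le\Tr\bar\psi$. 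The solution is therefore bounded in $L^2$ on $[0,\tau_{\max})$, which contradicts the continuation criterion unless $\tau_{\max}=T$. Uniqueness follows from Theorem~\ref{thm:non-continuable-solutions}, and reversing time completes the proof.

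The main obstacle is the comparison step. One has to check that Lemma~\ref{lem:volterra-cone-invariance} applies to the equation for $\Delta$, whose drift $\Delta\mapsto M^\top\Delta+\Delta M+2\widetilde\psi(s)\alpha\widetilde\psi(s)$ depends on time through the already constructed $\widetilde\psi$ and has zero initial term. If the lemma only covers autonomous drifts, we would instead freeze $\widetilde\psi$ and redo its proof; it relies only on the nonnegativity-preserving property of $k$ and on the inward-pointing condition. As a fallback in the case $k\in C^2$ one could use the resolvent/Gronwall argument of Lemma~\ref{lem:GS-exsitence-riccati}. There, taking traces makes the term $-2\Tr(\widetilde\psi\alpha\widetilde\psi)\le0$ harmless, so no normalization by $Q$ is needed.
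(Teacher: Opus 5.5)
Your proposal is correct and follows the paper's proof essentially step for step. The shared steps are: time reversal and Theorem~\ref{thm:non-continuable-solutions} to get a non-continuable solution; invariance of $\bS_d^+$ via $vz=zv=0$ and Lemma~\ref{lem:volterra-cone-invariance}; comparison with the linear equation (the paper's $\Lambda$) by applying the same lemma to the difference, with positive semidefinite forcing $2\widetilde\psi\alpha\widetilde\psi$; and finally $\|\widetilde\psi\|\le\Tr\bar\psi$ to exclude blow-up. Your worry about the comparison step is unnecessary, since Lemma~\ref{lem:volterra-cone-invariance} already allows a time-dependent $F\in L^1([0,T_0],\bS_d^+)$ and $u=0$ (apply it on $[0,T_0]$ for each $T_0<\tau_{\max}$). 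The only detail to add, as the paper does, is that $\widetilde\psi$ has a continuous representative ($k$ is continuous and $R(\cdot,\widetilde\psi)\in L^1_{\mathrm{loc}}$), which that lemma requires.
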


\begin{proof}
	After time reversal, Theorem~\ref{thm:non-continuable-solutions} yields a unique non-continuable solution $(\widetilde{\psi}, \tau_{\max})$. Since $k$ is continuous and $f \in L^1$, $\widetilde{\psi}$ has a continuous representative. Write $\widetilde{f}(r) := f(T-r)$. We verify the supporting-normal condition directly. The calculation also establishes quasi-monotonicity, as defined in Appendix~\ref{sec:appendix-riccati}.  Similarly to the proof of Lemma~\ref{lem:GS-exsitence-riccati} define
	\begin{equation*}
	R(X) := M^\top X + XM - 2X\alpha X, \quad X \in \bS_d.
	\end{equation*}
	If $X \preceq Y$, put $D := Y-X \succeq 0$, and let $Z \in \bS_d^+$ satisfy $\Tr(DZ) = 0$. Then $DZ = ZD = 0$ and
	\begin{align*}
		R(Y) - R(X) = M^\top D + DM - 2X\alpha D - 2D\alpha X - 2D\alpha D.
	\end{align*}
	Pairing with $Z$ and using cyclicity of the trace shows that every term on the right-hand side vanishes. Hence, $\langle R(Y) - R(X), Z \rangle = 0$.
	Thus, $R$ is quasi-monotone increasing on the self-dual cone $\bS_d^+$. In particular, if $\widetilde{\psi},Z \in \bS_d^+$ and $\Tr(\widetilde{\psi} Z) = 0$, then
	\begin{equation*}
	\langle \widetilde{f}(r) + R(\widetilde{\psi}), Z \rangle = \langle \widetilde{f}(r), Z\rangle \geq 0 \quad \text{ for a.e. } r.
	\end{equation*}
	Lemma~\ref{lem:volterra-cone-invariance} consequently gives $\widetilde{\psi}(r) \succeq 0$ for all $r < \tau_{\max}$. Having established positive semidefiniteness, it remains to rule out finite-time explosion of the non-continuable solution. The quadratic term in the Riccati equation has the favorable sign $-2\widetilde{\psi}\alpha\widetilde{\psi} \preceq 0$. This suggests comparing $\widetilde{\psi}$ from above with the solution obtained by dropping this negative quadratic term, which is a linear equation that admits a global solution. Let $\Lambda$ denote the unique global solution of the linear Volterra equation
	\begin{equation*}
	\Lambda(t) = uk(t) + \int_{0}^{t} k(t-s)\bigl(f(T-s) + M^\top \Lambda(s) + \Lambda(s)M\bigr)\,ds.
	\end{equation*}
	Global existence follows from \cite[Corollary B.3]{abijaber_Affine_2019} after vectorization. By Lemma~\ref{lem:volterra-cone-invariance}, we have again that $\Lambda(t) \succeq 0$. The difference $D = \Lambda-\widetilde{\psi}$ satisfies, on $[0,\tau_{\max})$,
	\begin{equation*}
	D(t) = \int_{0}^{t} k(t-s)\bigl(M^\top D(s) + D(s)M + 2\widetilde{\psi}(s)\alpha \widetilde{\psi}(s)\bigr)\,ds.
	\end{equation*}
	Since $\widetilde{\psi}\alpha \widetilde{\psi} \succeq 0$, another application of Lemma~\ref{lem:volterra-cone-invariance} yields $D \succeq 0$. Thus, 
	\begin{equation*}
	0 \preceq \widetilde{\psi}(t) \preceq \Lambda(t), \quad t < \tau_{\max}.
	\end{equation*}
	For positive semidefinite matrices this implies $\|\widetilde{\psi}(t)\| \leq \Tr(\widetilde{\psi}(t)) \leq \Tr(\Lambda(t)) \leq \sqrt{d}\|\Lambda(t)\|$. Hence, $\widetilde{\psi}$ cannot blow up on a finite interval. The continuation criterion in Theorem~\ref{thm:non-continuable-solutions} therefore forces $\tau_{\max} = T$. Uniqueness follows from the same theorem.
\end{proof}

\begin{remark}
    The Riccati--Volterra equation above is closely related to the
    fractional Riccati equations appearing in rough affine models.
    To see this, consider first, formally, the fractional kernel
    \begin{equation*}
        k_\alpha(t) = \frac{t^{\alpha-1}}{\Gamma(\alpha)}, \quad \alpha = H + \frac{1}{2} \in \left(\frac{1}{2},1\right).
    \end{equation*}
    Writing the backward Riccati--Volterra equation in terms of the time-to-maturity variable, i.e.
    \begin{equation*}
        \widehat{\psi}(t):=\psi(T-t),
    \end{equation*}
    gives
    \begin{equation*}
        \widehat{\psi}(t) = u_3 k_\alpha(t) + \int_{0}^{t} k_\alpha(t-s) R(T-s,\widehat{\psi}(s))\,ds.
    \end{equation*}
    Since convolution with $k_\alpha$ coincides with the Riemann--Liouville fractional integral $I^\alpha$, this can be equivalently written as
    \begin{equation*}
        D^\alpha\widehat{\psi}(t) = R(T-t,\widehat{\psi}(t)), \quad \bigl(I^{1-\alpha}\widehat{\psi}\bigr)(0) = u_3.
    \end{equation*}
    Equivalently, in the original time variable, this takes the form
    \begin{equation*}
        (D^\alpha\widehat{\psi})(T-t) = R(t,\widehat{\psi}(T-t)), \quad t \in [0,T],
    \end{equation*}
    which is the usual form of the fractional Riccati equation in
    rough affine models.

    The singular fractional kernel $k_\alpha$ is, however, not covered by the existence result above. In our setting we instead consider the shifted fractional kernel
    \begin{equation*}
        k_{\alpha,\varepsilon}(t) = \frac{(t+\varepsilon)^{\alpha-1}}{\Gamma(\alpha)}, \quad \varepsilon > 0.
    \end{equation*}
    Let $L_\varepsilon$ denote its resolvent of the first kind and set
    \begin{equation*}
        D^{k_{\alpha,\varepsilon}}g := \frac{d}{dt}(L_\varepsilon \ast g).
    \end{equation*}
    Then the time-reversed Riccati--Volterra equation
    \begin{equation*}
        \widehat{\psi}(t) = u_3 k_{\alpha,\varepsilon}(t) + \int_{0}^{t} k_{\alpha,\varepsilon}(t-s) R(T-s,\widehat{\psi}(s))\,ds
    \end{equation*}
    is equivalently expressed as
    \begin{equation*}
        D^{k_{\alpha,\varepsilon}} \widehat{\psi}(t) = R(T-t,\widehat{\psi}(t)), \quad (L_\varepsilon*\widehat{\psi})(0) = u_3.
    \end{equation*}
    Or, in the original time variable,
    \begin{equation*}
        \bigl(D^{k_{\alpha,\varepsilon}} \widehat{\psi}\bigr)(T-t) = R(t,\widehat{\psi}(T-t)).
    \end{equation*}
\end{remark}

\newpage

\appendix

\section{Solutions and invariance results for matrix-valued Volterra integral equations}
\label{sec:appendix-riccati}

Fix $T < \infty$, let $k \in \loc^2(\R_+,\R)$, $g : \R_+ \to \bS_d(\C)$ and $p : \R_+ \times \bS_d(\C) \to \bS_d(\C)$, and consider
\begin{equation}
	\psi(t) = g(T-t) + \int_{t}^{T} k(s-t)p(s,\psi(s))\,ds, \quad t \in [0,T].
\end{equation}
With $\widetilde{\psi}(r) = \psi(T-r)$, this is equivalent to
\begin{equation}\label{eq:appenidx-riccati-FE}
	\widetilde{\psi}(t) = g(t) + \int_{0}^{t} k(t-s)p(T-s,\widetilde{\psi}(s))\,ds, \quad t \in [0,T].
\end{equation}

Following \cite[Appendix B]{abijaber_Affine_2019}, a non-continuable solution of \eqref{eq:appenidx-riccati-FE} is a pair $(\widetilde{\psi},\tau_{\max})$, $\tau_{\max} \in (0,\infty]$, such that $\widetilde{\psi} \in \loc^2([0,\tau_{\max}),\bS_d(\C))$ solves the equation on $[0,\tau_{\max})$ and, if $\tau_{\max}<\infty$, then
\begin{equation*}
\|\widetilde{\psi}\|_{L^2(0,\tau_{\max})} = \infty.
\end{equation*}

\begin{theorem}\label{thm:non-continuable-solutions}
	Assume $g \in \loc^2(\R_+,\bS_d(\C))$, $p(\cdot,0) \in \loc^1(\R_+,\bS_d(\C))$, and that for every $T < \infty$ there are $\Pi_T \in L^2([0,T],\R_+)$ and $\Theta_T \geq 0$ such that
	\begin{equation}\label{eq:riccati-existence-growth-con}
	\|p(t,x)-p(t,y)\| \leq \Pi_T(t)\|x-y\| + \Theta_T\|x-y\|(\|x\|+\|y\|)
	\end{equation}
	for $x,y \in \bS_d(\C)$ and $t \leq T$. Then \eqref{eq:appenidx-riccati-FE} has a unique non-continuable solution $(\widetilde{\psi},\tau_{\max})$ with
	\begin{equation*}
	\widetilde{\psi} \in \loc^2([0,\tau_{\max}),\bS_d(\C)).
	\end{equation*}
	If $g$ and $p$ are real-valued, then so is $\widetilde{\psi}$.
\end{theorem}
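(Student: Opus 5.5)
The plan is to reduce Theorem~\ref{thm:non-continuable-solutions} to the vector-valued result of \cite[Theorem B.1]{abijaber_Affine_2019} by vectorization, as is done throughout the paper. Identify $\bS_d(\C)$ with a subspace of $\C^{d^2}$ via $\vc$, and then with $\R^{2d^2}$ by splitting real and imaginary parts. Under this identification, \eqref{eq:appenidx-riccati-FE} becomes
\begin{equation*}
	\vc(\widetilde{\psi}(t)) = \vc(g(t)) + \int_0^t k(t-s)\,\vc\bigl(p(T-s,\widetilde{\psi}(s))\bigr)\,ds ,
\end{equation*}
which is a Volterra equation with kernel $k\Id$, forcing term $\vc(g)\in\loc^2$ and nonlinearity $\widetilde p(s,y)=\vc(p(T-s,\vc^{-1}y))$. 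Since $\vc$ is an isometry for the Frobenius norm, the growth and Lipschitz condition \eqref{eq:riccati-existence-growth-con} carries over verbatim with the same $\Pi_T$ and $\Theta_T$. The condition $p(\cdot,0)\in\loc^1$ also transfers directly.

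If the cited result is not stated in exactly this form, with local $L^2$ solutions and a quadratic-type Lipschitz bound, I would prove it directly instead.

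\textbf{Local existence.} On a short interval $[0,h]$, consider the map
\begin{equation*}
	\Gamma(\phi)=g+k*p(T-\cdot,\phi)
\end{equation*}
on a closed ball of $L^2([0,h])$ around $g$. Young's inequality gives $\|k*F\|_{L^2(0,h)}\le \|k\|_{L^2(0,h)}\|F\|_{L^1(0,h)}$. By \eqref{eq:riccati-existence-growth-con} and Cauchy--Schwarz,
\begin{equation*}
	\|p(\cdot,\phi)-p(\cdot,\phi')\|_{L^1}\le \bigl(\|\Pi_T\|_{L^2}+\Theta_T(\|\phi\|_{L^2}+\|\phi'\|_{L^2})\bigr)\|\phi-\phi'\|_{L^2}.
\end{equation*}
Since $\|k\|_{L^2(0,h)}\to0$ as $h\to0$, $\Gamma$ is a contraction for small $h$.

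\textbf{Continuation.} To extend past a time $t_0$, rewrite the equation on $[t_0,t_0+h]$ with the past contribution $\int_0^{t_0}k(t-s)p(\cdot)\,ds$ absorbed into a new $L^2$ forcing term. Repeat the contraction argument there.

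\textbf{Uniqueness.} Two solutions agree on a common interval by a Gronwall-type argument using the same estimate on small subintervals. Gluing the local pieces via Zorn's lemma or a supremum argument yields a maximal $\tau_{\max}$.

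The main obstacle is the blow-up alternative: one must show that if $\tau_{\max}<\infty$ and $\|\widetilde{\psi}\|_{L^2(0,\tau_{\max})}<\infty$, the solution could still be continued. The difficulty is that the admissible step length $h$ depends on the size of the solution. I would handle this by noting that the contraction radius depends only on $\|\widetilde{\psi}\|_{L^2(0,\tau_{\max})}$, on $\|\Pi_T\|_{L^2}$ and $\Theta_T$, and on a modulus of $\|k\|_{L^2(0,h)}$. This gives a uniform step size $h$ near $\tau_{\max}$. Restarting from $\tau_{\max}-h/2$ then contradicts maximality.

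Finally, if $g$ and $p$ are real-valued, the whole fixed-point iteration can be run in the closed real subspace $\loc^2(\bS_d)$. The resulting real solution must coincide with $\widetilde{\psi}$ by uniqueness, so $\widetilde{\psi}$ is real-valued.
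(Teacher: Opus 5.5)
Your proposal is correct and follows the same route as the paper: vectorize, transfer \eqref{eq:riccati-existence-growth-con} through the Frobenius isometry, invoke \cite[Theorem B.1]{abijaber_Affine_2019}, and obtain real-valuedness from uniqueness. The one detail the paper states explicitly and you skip is why the vectorized solution stays in $\vc(\bS_d(\C))$, which you need because your $\widetilde p$ is only defined there; either extend $p$ by symmetrization, or note that the Picard iterates remain symmetric (your direct fallback argument gives this automatically).
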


\begin{proof}
	Let $y = \vc(\widetilde{\psi})$. Since multiplication by the scalar kernel commutes with vectorization, \eqref{eq:appenidx-riccati-FE} becomes
	\begin{equation*}
	y(t) = \vc(g(t)) + \int_{0}^{t} k(t-s)q(s,y(s))\,ds, \quad q(s,z) = \vc\bigl(p(T-s,\vc^{-1}(z))\bigr).
	\end{equation*}
	Vectorization is an isometry for the Frobenius norm, so \eqref{eq:riccati-existence-growth-con} transfers directly to $q$. Thus, \cite[Theorem B.1]{abijaber_Affine_2019} yields a unique non-continuable solution in $\loc^2$. Since $g$ and $p$ take values in the symmetric matrices, the Picard iterates used in the local existence proof remain in the symmetric subspace. Uniqueness therefore implies that the solution is $\bS_d(\C)$-valued. The real-valued statement follows in the same manner.
\end{proof}

The following definition introduces the quasi-monotonicity condition underlying the comparison arguments for matrix Riccati equations; see, in particular, \cite{cuchiero_Affine_2011} for the classical affine setting on $\bS_d^+$. We then formulate a deterministic cone-invariance principle for Volterra equations with nonnegativity-preserving kernels, in the spirit of \cite{alfonsi_Nonnegativity_2025,abijaber_Weak_2026}, and adapted to the finite-dimensional setting required here.

\begin{definition}[Quasi-monotonicity]
    Let $U \subset \bS_d$ be an open set. A function $f : U \to \bS_d$ is called \emph{quasi-monotone increasing}
    if, for all $X,Y \in U$ and $Z \in \bS_d^+$ such that
    \begin{equation*}
        X \preceq Y, \quad \langle X,Z\rangle = \langle Y,Z\rangle,
    \end{equation*}
    it holds that
    \begin{equation*}
        \langle f(X),Z \rangle \leq \langle f(Y),Z \rangle .
    \end{equation*}
\end{definition}

The following invariance argument is inspired by the splitting approach of \cite[Section 3]{alfonsi_Nonnegativity_2025}, and subsequently extended to more general Volterra kernels in \cite[Section 2.1]{abijaber_Weak_2026}. The basic idea is to separate, on each time step, the propagation of the past through the kernel from the local dynamics and to use the nonnegativity-preserving property of the kernel to retain the cone constraint at each step. We adapt this argument here to the deterministic Volterra equation arising in our Riccati-equations.

\begin{lemma}\label{lem:volterra-cone-invariance}
	Let $E$ be a finite-dimensional real Hilbert space, $C\subset E$ a closed convex cone with dual cone $C^\ast$, and let $k:[0,T]\to\R_+$ be continuous, satisfy $k(0)>0$, and preserve nonnegativity in the sense of Definition~\ref{def:nonnegativity-preserving-kernel}. Let $u\in C$, let $F\in L^1([0,T],C)$, and let $G:[0,T] \times E \to E$ be continuous and locally Lipschitz in the second variable, locally uniformly in time. Assume that for a.e. $t \in [0,T]$,
	\begin{equation}\label{eq:cone-inward-condition}
		\langle F(t)+G(t,x),z\rangle \geq 0
	\end{equation}
	for every $x\in C$ and $z\in C^\ast$ satisfying $\langle x,z\rangle = 0$. Then every continuous non-continuable solution of
	\begin{equation}\label{eq:cone-volterra-equation}
		x(t) = k(t)u+\int_{0}^{t} k(t-s)\bigl(F(s)+G(s,x(s))\bigr)\,ds
	\end{equation}
	remains in $C$ on its interval of existence.
\end{lemma}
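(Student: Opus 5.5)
Our plan is to follow the splitting approach of \cite[Section 3]{alfonsi_Nonnegativity_2025}: approximate the solution of \eqref{eq:cone-volterra-equation} by a discretization that visibly stays in $C$, and then pass to the limit using uniqueness. Fix $\tau<\tau_{\max}$. On $[0,\tau]$ the continuous solution $x$ is bounded, say by $R$. We may therefore replace $G$ by a globally Lipschitz modification $G_R$ that agrees with $G$ on a ball containing all relevant states; the inward condition \eqref{eq:cone-inward-condition} is only needed on that ball.

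\textbf{Scalarization.} Because $C$ is closed and convex, $x(t)\in C$ holds iff $\langle x(t),z\rangle\ge 0$ for every $z\in C^\ast$. Hence it suffices to build approximations $x^n$ with $\langle x^n(t),z\rangle\ge0$ for all $z\in C^\ast$ and all $t$. This is what lets the scalar nonnegativity-preservation property of Definition~\ref{def:nonnegativity-preserving-kernel} act componentwise in the dual pairing.

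\textbf{Scheme.} Take a grid $t_i=i\tau/n$. The approximation is of the form $x^n(t)=\sum_{t_j\le t} k(t-t_j)\,\xi_j$ plus a small remainder, with $\xi_0=u$. The increments $\xi_j$ are chosen recursively so that the nodal values $x^n(t_i)=\sum_{j\le i}k(t_i-t_j)\xi_j$ stay in $C$. Concretely, given $x^n(t_i)\in C$, we set
\[
x^n(t_{i+1}) = \Pi_C\Bigl(x^n_{\mathrm{free}}(t_{i+1}) + k(0)\!\int_{t_i}^{t_{i+1}}\bigl(F(s)+G_R(s,x^n(t_i))\bigr)ds\Bigr),
\]
where $x^n_{\mathrm{free}}(t_{i+1})$ is the propagation of the past increments through the kernel. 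We then define $\xi_{i+1}$ accordingly. The step that must be justified is that the local update stays in $C$ up to an error of order $o(1/n)$ per step. This uses the inward condition together with the fact that the free propagation of nodal values lying in $C$ again lies in $C$. The latter follows by applying nonnegativity preservation to the scalars $x_j=\langle\xi_j,z\rangle$, which satisfy $\sum_{j\le i}x_jk(t_i-t_j)=\langle x^n(t_i),z\rangle\ge0$; this yields $\langle x^n(t),z\rangle\ge0$ for all intermediate $t$. One caveat: the condition $\sum_{j\le i}$ is needed for every $z\in C^\ast$ simultaneously, but since the $\xi_j$ are fixed vectors, each $z$ gives a valid scalar sequence, so this is harmless.

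\textbf{Obstacle and fallback.} The main obstacle is controlling the projection error and proving $x^n\to x$ uniformly on $[0,\tau]$. If the projection step proves too delicate, we would use an alternative route. First, replace $F$ by $F+\varepsilon c$ for some $c$ in the interior of the dual-positive direction, or prove the result with strict inequality. Then use a first-exit argument on the nodal values: at a first grid time where $\langle x^n,z\rangle$ would become negative, the inward condition combined with $k(0)>0$ gives a contradiction, since $k(0)$ multiplies the freshest increment. Finally, let $\varepsilon\to0$. For convergence, the Lipschitz property of $G_R$ and the continuity of $k$ give a discrete Gronwall-type estimate, $\sup_{t\le\tau}\|x^n(t)-x(t)\|\to0$, using uniqueness of the solution (Theorem~\ref{thm:non-continuable-solutions}). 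Closedness of $C$ then yields $x(t)\in C$ on $[0,\tau]$, and since $\tau<\tau_{\max}$ was arbitrary, the claim follows.
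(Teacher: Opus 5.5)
There is a gap exactly where you locate the ``main obstacle''. Your overall architecture is the right one: scalarize through $C^\ast$, propagate past increments through $k$ using nonnegativity preservation, localize $G$, and pass to the limit by Gronwall. But because you project onto $C$ at every node, membership of the nodal values in $C$ is automatic. The whole content of the lemma is therefore moved into the claim that the projection correction is $o(1/n)$ per step, uniformly in the step index. Without that bound the scheme is not consistent with \eqref{eq:cone-volterra-equation}, and $x^n\to x$ does not follow.

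You assert this bound without proof, and the fallback does not replace it. First, the fallback needs a $c$ with $\langle c,z\rangle>0$ for all $z\in C^\ast\setminus\{0\}$, i.e.\ $C$ with nonempty interior, which the lemma does not assume. More seriously, the discrete first-exit step (``the inward condition combined with $k(0)>0$ gives a contradiction'') is unjustified. Condition \eqref{eq:cone-inward-condition} constrains the drift only at boundary points paired with their normals. Your explicit step starts from a point of $C$ that is typically near, but not on, the boundary, and $G$ is frozen at $x^n(t_i)$ rather than at the free propagation. So overshoot has to be excluded quantitatively, which is the very estimate that is missing.

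That estimate is short. Put $p:=x^n_{\mathrm{free}}(t_{i+1})\in C$, $\Delta:=k(0)\int_{t_i}^{t_{i+1}}\bigl(F(s)+G_R(s,x^n(t_i))\bigr)\,ds$, $q:=\Pi_C(p+\Delta)$ and $n:=p+\Delta-q$.
\begin{itemize}
\item Since $C$ is a closed convex cone, $\langle n,q\rangle=0$ and $-n\in C^\ast$, so $\|n\|^2=\langle n,p\rangle+\langle n,\Delta\rangle\le\langle n,\Delta\rangle$.
\item Integrating the inward condition at $(q,-n)$ over $[t_i,t_{i+1}]$ and using the Lipschitz constant $L$ of $G_R$ gives $\|n\|\le k(0)Lh\,\|x^n(t_i)-q\|$.
\item By continuity of $k$ (modulus $\omega_k$) and an a priori bound on $\sum_j\|\xi_j\|$, you get $\|x^n(t_i)-p\|\le C\omega_k(h)$. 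Together with $\|q-p\|\le 2\|\Delta\|$, this yields $\|n\|\le Ch\varepsilon_N$, where $\varepsilon_N:=h+\omega_k(h)+\sup_{|I|\le h}\int_I\|F\|$.
\item The corrections, propagated through the bounded kernel, then sum to $O(\varepsilon_N)$.
\end{itemize}

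The paper avoids projections entirely. On each step it solves the ODE $\xi'=k(0)\bigl(F+G(\cdot,\xi)\bigr)$ started at the free propagation. $C$-invariance of that ODE follows from $\tfrac12\tfrac{d}{ds}d_C(\xi)^2\le k(0)L\,d_C(\xi)^2$, which is the same normal-cone computation applied at $\Pi_C\xi$. The scheme is then exactly $C$-valued, and convergence is a plain Gronwall argument.

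Finally, your localized $G_R$ must satisfy the inward condition on all of $C$, not just on the ball. The approximations are not known a priori to stay in the ball, and in the estimate above the condition is used at $q$. The paper's cutoff $\chi G$ works because $F(t)\in C$. Alternatively, $G(t,\pi_R x)$ with the radial retraction $\pi_R$ works because $C$ is a cone.
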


\begin{proof}
	We first suppose that $G$ is globally Lipschitz in its second variable, uniformly on $[0,T]$. For $N\in\N$, let $0 = t_0<t_1<\cdots<t_N = T$ be the uniform grid with mesh $h = T/N$. We construct a deterministic splitting approximation $(\widehat{x}^N,\xi^N)$ as follows. Initially,
	\begin{equation*}
	\widehat{x}^N(t) = k(t)u,\quad 0 \leq t<t_1.
	\end{equation*}
	Suppose that $\widehat{x}^N$ has been constructed up to $t_j$. For $t\in(t_j,t_{j+1})$, propagate all previous increments only through the kernel and set
	\begin{equation}\label{eq:cone-splitting-between}
	\widehat{x}^N(t) = k(t)u + \sum_{i = 1}^{j}k(t-t_i)\Delta_i^N, \qquad
	\Delta_i^N := \frac{\widehat{x}^N(t_i) - \widehat{x}^N(t_i-)}{k(0)}.
	\end{equation}
	Starting from the left limit $\widehat{x}^N(t_{j+1}-)$, let $\xi^N$ on $[t_j,t_{j+1}]$ solve the ODE
	\begin{equation}\label{eq:cone-splitting-ode}
	\xi^N(s) = \widehat{x}^N(t_{j+1}-) + k(0)\int_{t_j}^{s} \bigl(F(r)+G(r,\xi^N(r))\bigr)\,dr, \quad s \in [t_j,t_{j+1}],
	\end{equation}
	and define $\widehat{x}^N(t_{j+1}):=\xi^N(t_{j+1})$. Thus,
	\begin{equation}\label{eq:cone-splitting-increment}
	\Delta_{j+1}^N = \int_{t_j}^{t_{j+1}} \bigl(F(r)+G(r,\xi^N(r))\bigr)\,dr.
	\end{equation}

	We claim that $\widehat{x}^N(t) \in C$ for all $t \in [0,T]$. We first verify directly that the ODE step in \eqref{eq:cone-splitting-ode} leaves $C$ invariant. Let $\Pi_C$ denote the projection onto $C$ and put $d_C(y) = \|y-\Pi_Cy\|$. For an absolutely continuous solution $\xi$ of \eqref{eq:cone-splitting-ode}, the squared distance is absolutely continuous and, for a.e. $s$,
	\begin{equation*}
	\frac{1}{2}\frac{d}{ds}d_C(\xi(s))^2 = k(0)\bigl\langle \xi(s)-\Pi_C\xi(s),F(s)+G(s,\xi(s))\bigr\rangle.
	\end{equation*}
	Set $p = \Pi_C\xi(s)$ and $n = \xi(s)-p$. The characterization of the projection onto a closed convex cone gives $\langle n,p\rangle = 0$ and $\langle n,y\rangle\leq0$ for every $y\in C$. Hence, $z := -n\in C^\ast$ and $\langle p,z\rangle = 0$. By \eqref{eq:cone-inward-condition},
	\begin{equation*}
	\langle n,F(s)+G(s,p)\rangle\leq0.
	\end{equation*}
	If $L$ is a global Lipschitz constant of $G$ in its second variable, then
	\begin{equation*}
	\frac{1}{2}\frac{d}{ds}d_C(\xi(s))^2
	\leq k(0)L\|n\|^2
	=k(0)L d_C(\xi(s))^2.
	\end{equation*}
	Thus, an ODE trajectory which starts in $C$ has $d_C(\xi(s)) = 0$ throughout the step by Gronwall's lemma. Consequently, whenever $\widehat{x}^N(t_{j+1}-)\in C$, also $\widehat{x}^N(t_{j+1}) \in C$.

	It remains to propagate positivity between grid points. Fix $z \in C^\ast$. With
	\begin{equation*}
	a_0 := \langle u,z\rangle,\qquad
	a_i := \langle\Delta_i^N,z\rangle,\quad i \geq 1,
	\end{equation*}
	we have at every grid point already constructed
	\begin{equation*}
	\sum_{i = 0}^{j}a_i k(t_j-t_i)
	=\langle\widehat{x}^N(t_j),z\rangle \geq 0.
	\end{equation*}
	Since $k$ preserves nonnegativity, Definition~\ref{def:nonnegativity-preserving-kernel} therefore implies
	\begin{equation*}
	\langle\widehat{x}^N(t),z\rangle
	=\sum_{t_i\leq t}a_i k(t-t_i)\geq 0
	\end{equation*}
	until the next grid point. As this holds for every $z \in C^\ast$ and $C$ is a closed convex cone, the characterization
\begin{equation*}
    C = \{x\in E:\langle x,z\rangle\geq 0
    \text{ for all }z\in C^\ast\}
\end{equation*}
implies that $\widehat{x}^N(t) \in C$. Starting from $k(t)u \in C$ on $[0,t_1)$, induction proves the claim on all of $[0,T]$. 
	We next pass to the limit. Let $\omega_k$ denote the modulus of continuity of $k$ on $[0,T]$ and set
	\begin{equation*}
	\varepsilon_N := h+\omega_k(h) + \sup_{I \subset[0,T],\ |I| \leq h} \int_I \|F(s)\| \, ds,
	\end{equation*}
	so that $\varepsilon_N\to0$. The deterministic estimates underlying the splitting approximation in \cite[Proposition 3.2]{alfonsi_Nonnegativity_2025} apply verbatim here (with the stochastic coefficient equal to zero). For completeness, the relevant bounds are as follows. Global Lipschitz continuity of $G$, the representation \eqref{eq:cone-splitting-between}, and Gronwall's lemma first give a constant $C_T$, independent of $N$, such that
	\begin{equation*}
	\sup_{t\leq T}\bigl(\|\widehat{x}^N(t)\|+\|\xi^N(t)\|\bigr) \leq C_T.
	\end{equation*}
	On each interval $[t_j,t_{j+1})$, \eqref{eq:cone-splitting-ode} and the uniform continuity of $k$ then yield
	\begin{equation}\label{eq:cone-splitting-close}
	\sup_{t \leq T} \|\xi^N(t) - \widehat{x}^N(t)\| \leq C_T\varepsilon_N.
	\end{equation}
	Writing $r_N(s) = t_{j+1}$ for $s \in [t_j,t_{j+1})$, equations \eqref{eq:cone-splitting-between}--\eqref{eq:cone-splitting-increment} give, for $t \in [t_m,t_{m+1})$,
	$$\widehat{x}^N(t) = k(t)u + \int_{0}^{t_m} k\bigl(t-r_N(s)\bigr) \bigl(F(s) + G(s,\xi^N(s))\bigr)\,ds.$$
	Comparing this identity with \eqref{eq:cone-volterra-equation}, using \eqref{eq:cone-splitting-close}, $0\leq r_N(s)-s\leq h$, and the absolute continuity of the $L^1$-integral of $F$, we obtain
	$$\sup_{r \leq t} \|\widehat{x}^N(r) - x(r)\| \leq C_T\varepsilon_N +C_T\int_{0}^{t} \sup_{v\leq s}\| \widehat{x}^N(v) - x(v)\| \, ds.$$
	Gronwall's lemma shows that $\widehat{x}^N \to x$ uniformly on $[0,T]$. Since $C$ is closed and every $\widehat{x}^N$ is $C$-valued, $x(t) \in C$ for all $t \in [0,T]$.

It remains to remove the assumption that $G$ is globally Lipschitz. Assume therefore that $G$ is only locally Lipschitz, and fix $T_0$ strictly smaller than the maximal lifetime of the solution $x$.
Since $x$ is continuous on the compact interval $[0,T_0]$, its range $x([0,T_0])$ is bounded. Hence, there exists $R>0$ such that $\|x(t)\| \leq R$ on $t \in [0,T_0]$. Choose a Lipschitz cut-off function $\chi: E \to [0,1] $ such that $\chi(y) = 1$ for $\|y\| \leq R$, and $\chi(y) = 0$ outside some larger ball. Define also the truncated map $\widetilde{G}(t,y) := \chi(y)G(t,y)$.
Since $G$ is locally Lipschitz and $\widetilde{G}$ vanishes outside a bounded set, $\widetilde{G}$ is globally Lipschitz in the second variable.
We next check that the inward-pointing condition is preserved by this truncation. Let $y\in C$ and $z\in C^\ast$ satisfy $\langle y,z\rangle = 0$. Then
\begin{equation*}
	\langle F(t) + \widetilde{G}(t,y),z\rangle = \chi(y)\langle F(t) + G(t,y),z\rangle + (1-\chi(y))\langle F(t),z\rangle.
\end{equation*}
The first term on the right-hand side is nonnegative by \eqref{eq:cone-inward-condition}, while the second is nonnegative since $F(t)\in C$ and $z\in C^\ast$. Thus, $\langle F(t)+\widetilde{G}(t,y),z\rangle \geq 0$. Moreover, $\chi(x(t)) = 1$ for all $t\in[0,T_0]$, and therefore $\widetilde{G}(t,x(t)) = G(t,x(t))$ on $[0,T_0]$. Hence, the original solution $x$ also solves the Volterra equation with $\widetilde{G}$ on this interval. Since $\widetilde{G}$ is globally Lipschitz, the first part of the proof applies and yields
\begin{equation*}
    x(t) \in C, \quad t \in [0,T_0].
\end{equation*}
Since $T_0$ was arbitrary, the conclusion holds throughout the maximal interval.
\end{proof}

\begin{remark}\label{rem:matrix-quasimonotone}
	For $E = \bS_d$ equipped with $\langle x,y\rangle = \Tr(xy)$, the cone $\bS_d^+$ is self-dual. If $x,z \in \bS_d^+$ and $\Tr(xz) = 0$, then $xz = zx = 0$. Hence, the map
	\begin{equation*}
		x \mapsto M^\top x+xM-2x\alpha x
	\end{equation*}
	is quasi-monotone increasing on $\bS_d^+$. This is exactly the geometric ingredient used for matrix Riccati equations in \cite{cuchiero_Affine_2011}. The difference in the present Volterra setting is that quasi-monotonicity has to be combined with the nonnegativity-preserving property of $k$, which is what Lemma~\ref{lem:volterra-cone-invariance} provides.

	For the complex Riccati equation we work in the real Hilbert space $E = \bS_2\times\bS_2$ and identify $\bS_2^{+} + i\bS_2$ with the cone $C = \bS_2^+ \times \bS_2$. Its dual cone is $C^\ast = \bS_2^+\times\{0\}$. Consequently, the supporting-normal condition of Lemma~\ref{lem:volterra-cone-invariance} is exactly the inequality
	\begin{equation*}
	\langle \re R(P+iY), Z\rangle \geq 0, \quad P,Z \succeq 0, \quad \Tr(PZ) = 0,
	\end{equation*}
	verified in Lemma~\ref{lem:GS-exsitence-riccati}.
\end{remark}

\nocite{*}
\bibliographystyle{agsm}
\bibliography{reference.bib}
\end{document}